\newtheorem{theorem}{Theorem}[section]
\newtheorem{corollary}[theorem]{Corollary}
\newtheorem{lemma}[theorem]{Lemma}
\newtheorem{proposition}[theorem]{Proposition}
\theoremstyle{definition}
\newtheorem{definition}[theorem]{Definition}
\newtheorem{remark}[theorem]{Remark}
\newtheorem*{definition*}{Definition}
\newcounter{theoremintro}
\numberwithin{equation}{section}
\newcommand{\mc}[1]{\mathcal{#1}}
\newcommand{\bb}[1]{\mathbb{#1}}
\def\acts{\curvearrowright}
\title{Paper-folding models for the CAR algebra}
\author{Grigoris Kopsacheilis}
\address{Grigoris Kopsacheilis, KU Leuven Department of Mathematics, Celestijnenlaan 200B box 2400, BE-3001 Leuven, Belgium.}
\email{gkopsach@kuleuven.be}
\urladdr{https://sites.google.com/view/gkopsach}
\author{Wilhelm Winter}
\address{Wilhelm Winter, Mathematisches Institut, Fachbereich Mathematik und Informatik der
Universit\"at M\"unster, Einsteinstrasse 62, 48149 M\"unster, Germany.}
\email{wwinter@uni-muenster.de}
\urladdr{https://wilhelm-winter.de/}
\begin{document}
\begin{abstract}
We show that the CAR algebra admits a Cantor spectrum $\mathrm{C}^\ast$-diagonal that is not conjugate to the standard AF diagonal. We obtain this by classification theory of $\mathrm{C}^\ast$-algebras, and the diagonal arises by realising the CAR algebra as the crossed product of a free minimal action on the Cantor space, where the acting group is the product of a locally finite group with the infinite dihedral group. The main ingredient in the construction is a binary subshift associated to the well-known regular paper-folding sequence.

Moreover, we show that the CAR algebra in fact admits countably many, pairwise non-conjugate, Cantor spectrum diagonals which are distinguished by the different values of their diagonal dimension, as defined by Li, Liao and the second named author.

\end{abstract}
\maketitle

\section{Introduction}

\renewcommand{\thetheorem}{\Alph{theorem}}
\setcounter{theorem}{0}

\noindent The archetypical and quite inexhaustible source of examples of operator algebras are dynamical structures, interpreted broadly,\footnote{This could be classical topological or measurable dynamical systems, say with countable discrete acting groups, or also much more general setups, as long as they can be encoded as topological or measurable groupoids.} and the latter often provide highly efficient tools to study the former. In the case of classical dynamical systems with amenable acting groups, the associated operator algebras are particularly well-behaved and fall within the scope of spectacular classification results, most notably Connes’ classification of injective $\mathrm{II}_1$ factors on the von Neumann algebra side, and, much more recently, the classification of simple, nuclear and $\mc{Z}$-stable $\mathrm{C}^\ast$-algebras.

In the measurable, or von Neumann algebra case, such classification results tend to be particularly clean: there is only one injective $\mathrm{II}_1$ factor by Connes’ work, and by the Connes--Feldman--Weiss theorem every countable amenable equivalence relation is generated by a single transformation. It then follows that the hyperfinite $\mathrm{II}_1$ factor up to conjugacy has only one Cartan subalgebra (as introduced by Vershik and Feldman--Moore in \cite{Ver71, FelMoo77i-ii}) which is a very strong uniqueness result for underlying dynamical structures in this setting.

In the topological, or $\mathrm{C}^\ast$-algebra situation, one has to deal with a more complicated picture, keeping track of both dynamical and topological information in terms of $\mathrm{K}$-theoretic invariants. There is also the much more fundamental problem that we do not know whether every simple, separable, nuclear, and $\mc{Z}$-stable\footnote{$\mc{Z}$-stability is tensorial absorption of the Jiang--Su algebra $\mc{Z}$.} $\mathrm{C}^\ast$-algebra even has an underlying dynamical structure in the form of a Cartan subalgebra (this time in the $\mathrm{C}^\ast$-algebraic sense of Kumjian and Renault; see \cite{Kum86, Ren08}). This question is in fact equivalent to the all-intriguing UCT problem, which asks whether every separable nuclear $\mathrm{C}^\ast$-algebra is $\mathrm{KK}$-equivalent (a weak form of homotopy equivalence) to an abelian $\mathrm{C}^\ast$-algebra. While the existence of \emph{some} Cartan subalgebra is necessary for the classifiability of the ambient $\mathrm{C}^\ast$-algebra, even when the latter is quite well-understood as a simple nuclear $\mathrm{C}^\ast$-algebra, we know very little about the entirety of its underlying dynamical structures. The present paper makes progress on this problem for the CAR algebra, the UHF algebra $\mathrm{M}_{2^\infty} = \mathrm{M}_2\otimes\mathrm{M}_2\otimes\dots$ as introduced by Glimm, which, in a sense, can be thought of as the most elementary $\mathrm{C}^\ast$-algebra analogue of the hyperfinite $\mathrm{II}_1$ factor. Let us be more precise now.

A \emph{Cartan sub-$\mathrm{C}^\ast$-algebra} is a $\mathrm{C}^\ast$-pair $(D\subset A)$ where $A$ is generated by the set of \emph{normalisers} of $D$ in $A$ (i.e.\ those $v\in A$ such that $vDv^*\cup v^*Dv \subset D$), $D$ is maximal abelian in $A$ (a \emph{masa}) and there is a unique faithful conditional expectation $A\to D$. When $(D \subset A)$ moreover has the \emph{unique extension property}, in the sense that pure states on $D$ extend uniquely to states on $A$, we say that it is a \emph{$\mathrm{C}^\ast$-diagonal}. This additional feature ensures precisely that the underlying dynamics have trivial isotropy, i.e.\ they are free, a trait that justifies why $\mathrm{C}^\ast$-diagonals are in general considered more robust than Cartan pairs.

Identifying Cartan sub-$\mathrm{C}^\ast$-algebras and diagonals in a $\mathrm{C}^\ast$-algebra is a topic that has recently been intensively studied, motivated by the various implications of the presence of such intrinsic dynamics. A natural companion question in this framework is determining whether two Cartan sub-$\mathrm{C}^\ast$-algebras with homeomorphic spectra are \emph{conjugate}, in the sense that there is an automorphism of the ambient $\mathrm{C}^\ast$-algebra carrying one masa to the other. Establishing non-conjugacy is a problem that often proves to be rather difficult, and a common strategy for this is to distinguish the two pairs by topological properties of the spaces of state-extensions of the evaluations over points in the spectra of the masas -- a method that is of course completely inapplicable when dealing with $\mathrm{C}^\ast$-diagonals, the case of which requires the development of more sophisticated means of distinction.

For AF algebras, there are standard diagonals with totally disconnected spectra (called \emph{AF diagonals}) obtained as inductive limits of masas in finite-dimensional $\mathrm{C}^\ast$-algebras with connecting maps that preserve normalisers -- and these are all conjugate to each other (see \cite[Theorem~5.7]{Pow92}, where AF diagonals were called \emph{canonical regular masas}). While not at all obvious, there is, in general, a rich supply of other, more exotic diagonals: in his celebrated paper \cite{Bla90}, Blackadar showed that the CAR algebra surprisingly admits uncountably many pairwise non-conjugate $\mathrm{C}^\ast$-diagonals, all with the same $1$-dimensional spectrum (the product of the circle $S^1$ with the Cantor space), as well as that there exist diagonals with spectra that have both singletons and intervals as connected components. A problem that was left unresolved however is whether all $\mathrm{C}^\ast$-diagonals of the CAR algebra \emph{with Cantor spectrum} need to be (conjugate to) an AF diagonal. This question has come to the forefront in view of increasing efforts to develop a structure theory for Cartan and diagonal subalgebras of a given classifiable $\mathrm{C}^\ast$-algebra; cf.\ Problems XLVII and XLVIII of \cite{SchTikWhi25} and the remarks in between.

The particular interest in the Cantor space in this context comes from the fact that it is both universal and well accessible to $\mathrm{K}$-theoretic methods; it has the additional feature that it is invariant under taking tensor products. Our results confirm the existence of an abundance of such diagonals.

\begin{theorem}\label{intro:thm1}
The CAR algebra $\mathrm{M}_{2^\infty}$ admits a Cantor spectrum $\mathrm{C}^\ast$-diagonal that is not an AF diagonal.
\end{theorem}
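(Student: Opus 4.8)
The plan is to realise $\mathrm{M}_{2^\infty}$ as a crossed product $C(Z)\rtimes G$ for a free minimal action of $G=G_0\times D_\infty$ on a Cantor space $Z$, where $G_0$ is a locally finite group and $D_\infty=\mathbb{Z}\rtimes\mathbb{Z}/2$ is the infinite dihedral group, and then to show that the associated Cantor-spectrum $\mathrm{C}^\ast$-diagonal $C(Z)\subset C(Z)\rtimes G$ is not conjugate to an AF diagonal. Concretely, I would take $Z=Y\times X_{\mathrm{pf}}$, where $Y=\prod_{\mathbb{N}}\mathbb{Z}/2$ carries the translation action of $G_0=\bigoplus_{\mathbb{N}}\mathbb{Z}/2$ --- this action is free, minimal and uniquely ergodic, and $C(Y)\subset C(Y)\rtimes G_0$ with $C(Y)\rtimes G_0\cong\mathrm{M}_{2^\infty}$ is precisely the standard AF diagonal --- and where $X_{\mathrm{pf}}\subset\{0,1\}^{\mathbb{Z}}$ is the subshift of $\{0,1\}^{\mathbb{Z}}$ associated to the regular paper-folding sequence, equipped with the $D_\infty$-action whose $\mathbb{Z}$-part is the shift and whose order-two generator $s$ acts by a reflection about a suitably chosen centre. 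Since the paper-folding sequence is automatic, $X_{\mathrm{pf}}$ is minimal and uniquely ergodic; since it is non-periodic the shift acts freely; and since its palindromic factors are of bounded length, no point of $X_{\mathrm{pf}}$ is fixed by any reflection $st^{k}$, so the whole $D_\infty$-action is free. A product of two free minimal systems is free and minimal for the product group, so $G\acts Z$ is free and minimal; hence $C(Z)\rtimes G$ is unital, simple, separable and nuclear, and $C(Z)\subset C(Z)\rtimes G$ is a $\mathrm{C}^\ast$-diagonal with spectrum the Cantor space $Z$ --- freeness gives the unique extension property, and the masa property together with the existence and uniqueness of the conditional expectation are standard for transformation groupoids of free minimal actions.

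The next step is to identify $C(Z)\rtimes G$ via the classification theorem. Since $G$ is amenable and acts on a zero-dimensional space, $C(Z)\rtimes G$ is unital, separable and nuclear, satisfies the UCT, and --- possessing a tracial state coming from an invariant measure --- is stably finite; simplicity comes from freeness and minimality. Moreover $C(Z)\rtimes G\cong\mathrm{M}_{2^\infty}\otimes\bigl(C(X_{\mathrm{pf}})\rtimes D_\infty\bigr)$, so it absorbs the UHF algebra $\mathrm{M}_{2^\infty}$ and is in particular $\mc{Z}$-stable; thus it is classified by its Elliott invariant. To compute that invariant: unique ergodicity of both factors gives a unique tracial state; for $\mathrm{K}$-theory one combines the Pimsner--Voiculescu sequence for the $\mathbb{Z}$-part with the $\mathbb{Z}/2$ six-term sequence inside $D_\infty$ --- equivalently the Baum--Connes computation for $D_\infty$ acting properly on the line by reflections --- and the K\"unneth theorem for the UHF factor. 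The relation $u_su_tu_s^{\ast}=u_t^{\ast}$ forces $2[u_t]=0$, so $\mathrm{K}_1$ dies after tensoring with $\mathrm{M}_{2^\infty}$; and because the paper-folding sequence is base-$2$ automatic, its unique invariant measure takes only dyadic values, so that after tensoring with $\mathrm{M}_{2^\infty}$ one is left with $\bigl(\mathrm{K}_0,\mathrm{K}_0^+,[1],\mathrm{K}_1,T\bigr)=\bigl(\mathbb{Z}[1/2],\mathbb{Z}[1/2]_{\ge 0},1,0,\{\ast\}\bigr)$, the dyadic valuation being the trace pairing. This is the Elliott invariant of $\mathrm{M}_{2^\infty}$, so $C(Z)\rtimes G\cong\mathrm{M}_{2^\infty}$.

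It remains to show that this diagonal is genuinely new, and this is the crux of the argument. I would use the diagonal dimension $\dim_{\mathrm{diag}}$ of Li--Liao--Winter: an AF diagonal, being an inductive limit of finite-dimensional diagonal pairs, has $\dim_{\mathrm{diag}}=0$, and since all AF diagonals of $\mathrm{M}_{2^\infty}$ are mutually conjugate, it suffices to prove $\dim_{\mathrm{diag}}\bigl(C(Z)\subset C(Z)\rtimes G\bigr)\ge 1$. The subtlety is that none of the crude lower bounds helps here: the nuclear dimension of $\mathrm{M}_{2^\infty}$ is $0$, the covering dimension of the Cantor spectrum is $0$, and the locally finite tensor factor contributes only a zero-dimensional, AF-type piece --- so the obstruction must be extracted from the finer groupoid structure of $D_\infty\acts X_{\mathrm{pf}}$. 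Intuitively, $\dim_{\mathrm{diag}}=0$ would force the transformation groupoid of this action to be exhausted by clopen elementary (locally finite) subgroupoids compatibly with its unit space, essentially making it an AF groupoid, whereas the reflections in $D_\infty$ act on the non-periodic subshift in a way that is one-dimensional (interval-like) at the relevant scales rather than locally finite, and the paper-folding sequence is engineered precisely so that this obstruction is rigid and detectable. I expect turning this heuristic into an honest lower bound --- extracting from the combinatorics of the paper-folding word a concrete obstruction to covering the groupoid by elementary subgroupoids --- to be the main difficulty; by comparison, verifying freeness, minimality and $\mc{Z}$-stability and carrying out the $\mathrm{K}$-theory computation, while technical, is essentially routine given the available machinery.
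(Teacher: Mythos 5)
Your construction and classification steps track the paper closely (the paper takes $B=(C(\underline{X})\rtimes(\mathbb{Z}\rtimes\mathbb{Z}_2))\otimes \mathrm{M}_{2^\infty}$ with diagonal $C(\underline{X})\otimes\mathrm{D}_{2^\infty}$, which is exactly your $C(Y\times X_{\mathrm{pf}})\subset C(Y\times X_{\mathrm{pf}})\rtimes(G_0\times D_\infty)$), but the step that actually proves the theorem --- non-conjugacy to the AF diagonal --- is not proved in your proposal. You reduce it to showing $\dim_{\mathrm{diag}}\ge 1$ and then offer only a heuristic ("the groupoid cannot be exhausted by elementary subgroupoids"), explicitly deferring the lower bound as "the main difficulty". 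As written this is a gap at the crux. Moreover, your expectation that the lower bound must be extracted from the combinatorics of the paper-folding word is misplaced: in the paper's Theorem~\ref{thm:countably-many} the estimate $\dim_{\mathrm{diag}}\ge n$ comes from soft permanence properties --- quotienting out the UHF tensor factor by a character (\cite[Theorem~3.2~(v)]{LiLiaWin23}), monotonicity under passing to subgroups via dynamic asymptotic dimension, and the computation $\dim_{\mathrm{diag}}(C(X)\subset C(X)\rtimes(\mathbb{Z}\rtimes\mathbb{Z}_2))=1$ for free Cantor actions of finitely generated virtually abelian groups of asymptotic dimension one (\cite[Proposition~7.2]{LiLiaWin23}) --- no subshift combinatorics enter. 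For Theorem~\ref{intro:thm1} itself the paper uses an even simpler route you could have taken: by Archbold--Kumjian, every intermediate algebra of an AF diagonal pair is AF, whereas the intermediate algebra $(C(\underline{X})\rtimes_\varphi\mathbb{Z})\otimes\mathrm{M}_{2^\infty}$ of your pair has $\mathrm{K}_1\cong\mathbb{Z}\ne 0$. So your chosen strategy is viable (it is essentially the paper's proof of the stronger Theorem~\ref{intro:thm2}), but you have not supplied the argument that makes it work.

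Two further points. First, your involution is wrong as stated: a plain reflection about a centre does not preserve the paper-folding subshift, because the language of the paper-folding word is closed under anti-reversal (reverse and swap $\mathsf{0}\leftrightarrow\mathsf{1}$) but not under plain reversal --- for instance the complement (equivalently, by anti-reversal closure, the reversal) of $t_3=\mathsf{110110011100100}$ is not a factor, as one checks from the block decomposition $\underline{t}=t_3\,y_1\,\hat{t}_3\,y_2\,t_3\cdots$. The generator of the $\mathbb{Z}_2$-part must be anti-reversal, and the combinatorial input for freeness is then the finiteness of \emph{anti-palindromic} factors (Proposition~\ref{prop:pf}\ref{it3pf}), not of palindromic ones; this is fixable but as stated your freeness argument rests on the wrong symmetry. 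Second, the K-theory identification is asserted rather than carried out: the computation of $\mathrm{K}_0$ of the paper-folding $\mathbb{Z}$-subshift ($\mathbb{Z}[\tfrac12]\oplus\mathbb{Z}$, via an auxiliary substitution system), the action of the involution on it, and Thomsen's formula with its $\mathbb{Z}_2$-torsion summand and the position of $[1]_0$ constitute the technical bulk of the paper's Section~2, and calling this "essentially routine" does not discharge it, although the invariant you state after tensoring with $\mathrm{M}_{2^\infty}$ is the correct one.
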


While Blackadar's method employed Elliott's classification of AF algebras, our approach uses the more recent advances in the classification theory of stably finite $\mathrm{C}^\ast$-algebras; cf.\ \cite{Whi23, Win18, TikWhiWin17}. More precisely, we first construct the crossed product $\mathrm{C}^\ast$-algebra of a certain action of the infinite dihedral group $\bb{Z}\rtimes\bb{Z}_2$ on the Cantor space and we use the results of \cite{Tho10} (largely based on \cite{Nat85, BraEvaKis93}) to compute the $\mathrm{K}$-theory. Since we are aiming for the construction of a $\mathrm{C}^\ast$-\emph{diagonal} in a simple $\mathrm{C}^\ast$-algebra it is imperative that we start with a \emph{free} minimal action, and so the natural initial question is, which Cantor minimal systems ($\bb{Z}$-actions at this point) extend to \emph{free} actions of the infinite dihedral group. While the existence of such actions is well-understood (e.g.\ by Thomsen's analysis of $1$-dimensional solenoids \cite{Tho10}), producing an explicit example is non-trivial, let alone pinpointing one with the desired $\mathrm{K}$-theoretic features. In fact, a compactness argument shows that equicontinuous $\bb{Z}$-actions can only extend to non-free actions of $\bb{Z}\rtimes\bb{Z}_2$, so odometer actions are notably of no use for this problem  -- observe however that the dyadic odometer together with the involution on $\{\mathsf{0},\mathsf{1}\}^\bb{N}$ that swaps zeroes and ones defines an action of $\bb{Z}\rtimes\bb{Z}_2$ on the Cantor space that does give rise to a Cartan sub-$\mathrm{C}^\ast$-algebra which is not a diagonal in a simple monotracial AF algebra with a free abelian summand in its $\mathrm{K}_0$-group (and which is thus not UHF) \cite{BraEvaKis93}.

Despite this obscurity, it turns out that such examples can be obtained from expansive actions -- in particular, we consider a minimal subshift over the binary alphabet $\{\mathsf{0,1}\}$ the language of which is the set of finite subwords appearing in the classic \emph{paper-folding sequence} that curiously makes its appearance in this construction (cf.\ \Cref{sec:pf}): this is the self-similar sequence of ones and zeroes obtained by repeatedly folding a strip of paper each time by a right turn, then unfolding it and marking $\mathsf{1}$ or $\mathsf{0}$ for every right or left (respectively) turn that occurs in the resulting crest -- see \cite{DekFravdP82, Tab14} for some intriguing characteristics of the paper-folding sequence. The involution that implements the action of the $\bb{Z}_2$-subgroup in $\bb{Z}\rtimes\bb{Z}_2$ is given by \emph{anti-reversing}, i.e.\ flipping bi-infinite sequences with respect to the $0$th coordinate and swapping all $\mathsf{0}$s and $\mathsf{1}$s. Freeness of the resulting action is guaranteed by the fact that the paper-folding sequence contains only finitely many \emph{anti-palindromes} (i.e.\ finite subwords fixed under anti-reversal), a fact that, albeit not too surprising, is far from obvious (cf.\ \cite{BerBoaCarFag09}, \Cref{prop:pf}).

Utilising the techniques of \cite{DurHosSka99} for the computation of the $\mathrm{K}$-theory of \emph{substitutional systems}, we devise an ad-hoc method to compute the $\mathrm{K}$-theory of the paper-folding subshift, and we then calculate the $\mathrm{K}$-theory of the crossed product of the obtained free minimal action of $\bb{Z}\rtimes\bb{Z}_2$ via \cite{Tho10} (cf.\ \cite{Nat85, Kum90, BraEvaKis93}), in passing answering a question raised by Scarparo in \cite[Remark~2.3~(ii)]{Sca23}. This crossed product is a classifiable\footnote{By \emph{classifiable} we mean unital, separable, simple, nuclear, satisfying the universal coefficient theorem (UCT), and $\mc{Z}$-stable. Equivalently, $\mc{Z}$-stability can herein be replaced with having finite nuclear dimension; cf.\ \cite{Win12, CasEviTikWhiWin21}. Also see \cite{TomWin13} for classifiability of integer crossed product $\mathrm{C}^\ast$-algebras. \label{footnote}} $\mathrm{C}^\ast$-algebra, and its $\mathrm{K}$-theory turns out to be $\bb{Z}_2\oplus\bb{Z}[\tfrac{1}{2}]$ with order structure determined by the dyadic rationals, and so the torsion part indicates that we have not yet acquired an AF algebra -- nevertheless, upon tensoring with $\mathrm{M}_{2^\infty}$, the K{\"u}nneth formula ensures that the $2$-torsion summand is annihilated by $2$-divisibility of $\mathrm{K}_0(\mathrm{M}_{2^\infty})$. At this point classification theory of (stably finite) nuclear $\mathrm{C}^\ast$-algebras (cf.\ \cite{Lin01, RorSto02, Whi23, Win18}) allows us to conclude that we have constructed a Cantor spectrum diagonal in $\mathrm{M}_{2^\infty}$, but it yet remains to argue that this is not conjugate to an AF diagonal. To this end we use a result of Archbold and Kumjian \cite{ArcKum86} which says that intermediate sub-$\mathrm{C}^\ast$-algebras of AF diagonals are themselves AF algebras.

Employing the theory of \emph{diagonal dimension} of $\mathrm{C}^\ast$-pairs that was developed by Li, Liao and the second named author in \cite{LiLiaWin23}, by considering tensor powers of the non-AF diagonal, and since $\mathrm{M}_{2^\infty}$ is (strongly) self-absorbing, we obtain the following strengthening of \Cref{intro:thm1}, which in passing resolves a problem raised in \cite[Remark~6.10]{LiLiaWin23}.

\begin{theorem}\label{intro:thm2}
For each $n\in\{0,1,2,\dots,\infty\}$, there is a Cantor spectrum diagonal $(D \subset \mathrm{M}_{2^\infty})$ such that
\[
\dim_{\mathrm{diag}}(D \subset \mathrm{M}_{2^\infty})=n.
\]

In particular, the CAR algebra admits (at least) countably many pairwise non-conjugate $\mathrm{C}^\ast$-diagonals each of which has Cantor spectrum.
\end{theorem}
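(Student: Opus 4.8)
The plan is to bootstrap from \Cref{intro:thm1} using the strong self-absorption of $\mathrm{M}_{2^\infty}$ together with the permanence properties of diagonal dimension from \cite{LiLiaWin23}. Let $(D\subset\mathrm{M}_{2^\infty})$ be the Cantor spectrum diagonal produced for \Cref{intro:thm1}; by construction it has the form $\big(C(X)\otimes D_{\mathrm{AF}}\subset(C(X)\rtimes(\bb{Z}\rtimes\bb{Z}_2))\otimes\mathrm{M}_{2^\infty}\big)$, with $X$ the paper-folding subshift, the $\bb{Z}\rtimes\bb{Z}_2$-action free and minimal, and $D_{\mathrm{AF}}$ an AF diagonal of $\mathrm{M}_{2^\infty}$. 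The first step is to compute $\dim_{\mathrm{diag}}(D\subset\mathrm{M}_{2^\infty})=1$. For the upper bound one combines the crossed-product estimate of \cite{LiLiaWin23} -- the infinite dihedral group is virtually $\bb{Z}$, hence of asymptotic dimension $1$, and $X$ is totally disconnected -- with the fact that tensoring by the zero-dimensional pair $(D_{\mathrm{AF}}\subset\mathrm{M}_{2^\infty})$ does not raise diagonal dimension. For the lower bound one uses the characterisation of diagonal dimension $0$ from \cite{LiLiaWin23}, namely that a $\mathrm{C}^\ast$-diagonal of diagonal dimension $0$ is an AF diagonal, against \Cref{intro:thm1}, which asserts that the one at hand is not.

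For $n\in\{1,2,\dots\}$ set $(D_n\subset A_n):=(D\subset\mathrm{M}_{2^\infty})^{\otimes n}$, the $n$-fold minimal tensor product; for $n=0$ take an AF diagonal of $\mathrm{M}_{2^\infty}$; and for $n=\infty$ take the inductive limit $(D_\infty\subset A_\infty):=\bigotimes_{k\in\bb{N}}(D\subset\mathrm{M}_{2^\infty})$ along the unital embeddings $a\mapsto a\otimes 1$, which carry $D^{\otimes k}$ into $D^{\otimes(k+1)}$ and preserve normalisers and conditional expectations. In every case the result is a $\mathrm{C}^\ast$-diagonal -- the unique faithful conditional expectation, the generating set of normalisers, and the unique extension property all pass to minimal tensor products and to inductive limits of this kind -- whose spectrum is the corresponding finite (resp.\ countably infinite) power of the Cantor space, hence again a Cantor space. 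Since $\mathrm{M}_{2^\infty}$ is strongly self-absorbing, $A_n\cong\mathrm{M}_{2^\infty}$ for every $n\in\{0,1,2,\dots,\infty\}$, so all of these are Cantor spectrum diagonals of the CAR algebra.

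The substance of the theorem is then that $\dim_{\mathrm{diag}}(D_n\subset A_n)=n$. For finite $n$ the upper bound $\le n$ is again a crossed-product estimate: up to the harmless tensor factor $(D_{\mathrm{AF}}\subset\mathrm{M}_{2^\infty})$, the pair $(D_n\subset A_n)$ is the crossed product of a free action of $(\bb{Z}\rtimes\bb{Z}_2)^n$ -- a group that is virtually $\bb{Z}^n$, hence of asymptotic dimension $n$ -- on a Cantor space. The lower bound $\dim_{\mathrm{diag}}(D_n\subset A_n)\ge n$ for $n\ge 2$ is the crux of the argument, and I expect it to be the main obstacle. Here $A_n\cong\mathrm{M}_{2^\infty}$ has nuclear dimension $0$ and the spectrum of $D_n$ is the Cantor space, of covering dimension $0$, so neither of the two elementary lower bounds for diagonal dimension is available, and one needs an estimate sensitive to the \emph{dynamics} of the diagonal. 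The natural route is to extract from \cite{LiLiaWin23} (or else to establish) a lower bound for $\dim_{\mathrm{diag}}$ in terms of a cohomological dimension of the underlying twisted \'etale groupoid, and then to show that this quantity equals $n$ for the $n$-fold product of the paper-folding groupoid -- which should reduce to $(\bb{Z}\rtimes\bb{Z}_2)^n$ having (rational) cohomological dimension $n$, so that the $n$ mutually independent paper-folding directions cannot be absorbed into fewer. The case $n=\infty$ then follows by running the lower-bound argument with arbitrarily many tensor factors present -- equivalently, by monotonicity of $\dim_{\mathrm{diag}}$ along the diagonal-preserving inclusions $(D_n\subset A_n)\hookrightarrow(D_\infty\subset A_\infty)$ -- so that $\dim_{\mathrm{diag}}(D_\infty\subset A_\infty)\ge n$ for every $n$, and therefore equals $\infty$.

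Finally, $\dim_{\mathrm{diag}}$ is invariant under conjugacy of $\mathrm{C}^\ast$-pairs, so the diagonals $(D_n\subset\mathrm{M}_{2^\infty})$ with $n\in\{0,1,2,\dots\}$ realise pairwise distinct values of diagonal dimension and are in particular pairwise non-conjugate; this produces the claimed countable family of pairwise non-conjugate Cantor spectrum diagonals of the CAR algebra and settles \cite[Remark~6.10]{LiLiaWin23}.
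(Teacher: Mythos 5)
Your construction (tensor powers of the \Cref{intro:thm1} diagonal, the AF diagonal for $n=0$, the infinite tensor product for $n=\infty$) and your upper bound $\dim_{\mathrm{diag}}(D^{(n)}\subset \mathrm{M}_{2^\infty})\le n$ (asymptotic dimension $n$ of the virtually abelian group $(\bb{Z}\rtimes\bb{Z}_2)^n$, plus the fact that tensoring with the zero-dimensional pair $(\mathrm{D}_{2^\infty}\subset\mathrm{M}_{2^\infty})$ does not raise diagonal dimension) coincide with the paper's argument, and your $n=1$ lower bound via ``diagonal dimension zero forces an AF pair'' is legitimate. But the heart of the theorem is the lower bound $\dim_{\mathrm{diag}}(D^{(n)}\subset\mathrm{M}_{2^\infty})\ge n$ for $n\ge 2$, and there your proposal has a genuine gap: you explicitly defer it to a hoped-for lower bound for $\dim_{\mathrm{diag}}$ in terms of a (rational) cohomological dimension of the underlying groupoid, to be ``extracted from \cite{LiLiaWin23} or else established''. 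No such estimate is available in \cite{LiLiaWin23}, and proving one would be a substantial new result; as written, the case $n\ge 2$ (and hence $n=\infty$) is not proved. Note also that the difficulty is not only group-theoretic: the pair $(D^{(n)}\subset\mathrm{M}_{2^\infty})$ is the $(\bb{Z}\rtimes\bb{Z}_2)^n$-crossed-product pair \emph{tensored} with $(\mathrm{D}_{2^\infty}\subset\mathrm{M}_{2^\infty})$, and a priori this zero-dimensional factor could \emph{lower} the diagonal dimension; any lower-bound argument must see through it, which your sketch does not address. Similarly, your $n=\infty$ step invokes ``monotonicity of $\dim_{\mathrm{diag}}$ along diagonal-preserving inclusions'', which is not a known permanence property of diagonal dimension.

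For comparison, the paper obtains the lower bound without any new dimension theory, by staying entirely inside the dynamical picture. One realises $(\mathrm{D}_{2^\infty}\subset\mathrm{M}_{2^\infty})$ as $(C(Y)\subset C(Y)\rtimes\Gamma)$ for a free minimal action of the locally finite group $\Gamma=\bigoplus_\bb{N}\bb{Z}_2$ on a Cantor space $Y$ (cf.\ \cite[II.10.4.12]{Bla06}), so that $(D^{(n)}\subset\mathrm{M}_{2^\infty})$ becomes the crossed-product pair of a free action of $G=(\bb{Z}\rtimes\bb{Z}_2)^n\times\Gamma$ on $X^n\times Y$. Since for free actions on Cantor spaces diagonal dimension agrees with tower dimension and hence with dynamic asymptotic dimension (\cite[Theorem~5.4]{LiLiaWin23}, \cite{Ker20}), and since dynamic asymptotic dimension is monotone under passing to subgroups (\cite{GueWilYu17,Boe24}), one may restrict to the subgroup $H=(\bb{Z}\rtimes\bb{Z}_2)^n\times\{0_\Gamma\}$; the resulting pair is $(C(X^n)\otimes\mathrm{D}_{2^\infty}\subset (C(X^n)\rtimes(\bb{Z}\rtimes\bb{Z}_2)^n)\otimes\mathrm{D}_{2^\infty})$, and evaluating along a character of $\mathrm{D}_{2^\infty}$ together with the quotient estimate \cite[Theorem~3.2~(v)]{LiLiaWin23} brings one down to $(C(X^n)\subset C(X^n)\rtimes(\bb{Z}\rtimes\bb{Z}_2)^n)$, whose diagonal dimension is exactly $n$ by \cite[Proposition~7.2]{LiLiaWin23}. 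The same subgroup argument applied with arbitrarily large $n$ handles $D^{(\infty)}$. If you replace your cohomological programme by this chain of reductions, your outline becomes the paper's proof of \Cref{thm:countably-many}.
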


The existence of infinitely many pairwise non-conjugate $\mathrm{C}^\ast$-diagonals with Cantor spectrum in a $\mathrm{C}^\ast$-algebra also occurs in other contexts -- for example, the Bunce--Deddens algebra of type $2^\infty$ (i.e.\ the crossed product of the dyadic odometer) admits uncountably many pairwise non-conjugate Cantor spectrum diagonals. Indeed, Boyle and Handelman have shown in \cite{BoyHan94} that for any $0\le t\le \infty$ there is a Cantor minimal system that is strongly orbit equivalent to the dyadic odometer and with topological entropy $t$, so each of these systems gives rise to a Cantor spectrum diagonal therein (cf.\ \cite{GioPutSka95}), and pairwise non-conjugacy of these is witnessed by topological entropy thanks to a result of Boyle--Tomiyama \cite[Theorem~10.4]{Put18}; in fact, the analogous statement is true in any crossed product $\mathrm{C}^\ast$-algebra of a Cantor minimal system, due to a result of Sugisaki \cite{Sugisaki07}. What is particularly remarkable in \Cref{intro:thm2} is that this phenomenon occurs in an AF (in fact a UHF) algebra, which can never be realised as a crossed product of an integer action due to a $\mathrm{K}_1$-obstruction.

Constructing non-standard Cartan sub-$\mathrm{C}^\ast$-algebras in AF algebras is a topic that has already been explored to some extent, employing methods inspired by graph $\mathrm{C}^\ast$-algebras which are quite different from ours. Classical Cuntz--Krieger algebras of acyclic graphs do not quite do the trick, since they either yield AF diagonals or purely infinite (hence very much non-AF) $\mathrm{C}^\ast$-algebras with non-diagonal Cartans. The latter happens due to non-trivial isotropy of the underlying groupoids in that case, which is typical behaviour for graph $\mathrm{C}^\ast$-algebras. However, the generalisation of Kumjian and Pask \cite{KumPas00} to \emph{higher rank graphs} gives ground for producing more complex Cartan pairs, although with evidently greater difficulty towards deducing that the ambient $\mathrm{C}^\ast$-algebras are AF (let alone UHF). This topic was thoroughly studied by Evans and Sims in \cite{EvaSim12}, where in particular the authors give an application of their methods to produce an example of a Cartan sub-$\mathrm{C}^\ast$-algebra (with totally disconnected spectrum) that is not a diagonal in a $\mathrm{C}^\ast$-algebra which seems to resemble the CAR algebra in many ways (cf.\ \cite[Proposition~6.12]{EvaSim12}). Whether this ambient $\mathrm{C}^\ast$-algebra in their example is indeed $\mathrm{M}_{2^\infty}$ was left unanswered; however, one can bypass this obstacle by simply tensoring with $\mathrm{M}_{2^\infty}$. We address this in \Cref{rmk:graphs}. A similar construction was carried out by Mitscher and Spielberg in a slightly more general setting in \cite{MitSpi22}, where the authors obtained Cantor spectrum Cartan sub-$\mathrm{C}^\ast$-algebras that are not diagonals, in  simple monotracial AF algebras that are Morita equivalent to the Effros--Shen continued fraction algebras \cite{EffShe80}.

On the side of purely infinite $\mathrm{C}^\ast$-algebras, Cantor spectrum diagonals have been produced in the Cuntz algebra $\mc{O}_2$ by Sibbel and the second named author \cite{SibWin24}, and \cite{EviSib25} generalises this construction to $\mc{O}_n$ for $n\ge2$ (leaving the case $n=\infty$ open). The present work also complements these results in the stably finite case, which altogether indicate that, at least among the known strongly self-absorbing $\mathrm{C}^\ast$-algebras, there is a vast supply of diagonals with universal spectra (see also \cite{DeePutStr18} for diagonals with higher dimensional spectra in the Jiang--Su algebra) -- an observation that may also contribute to our understanding of the notorious UCT problem for nuclear $\mathrm{C}^\ast$-algebras.

\subsection*{Acknowledgements}
This work was funded by the Deutsche Forschungsgemeinschaft
(DFG, German Research Foundation) under Germany's Excellence Strategy EXC 2044-390685587,
Mathematics M{\"u}nster: Dynamics--Geometry--Structure, by the SFB 1442 of the DFG, and by ERC Advanced Grant 834267 -- AMAREC.

\section{The paper-folding subshift}\label{sec:pf}

\renewcommand{\thetheorem}{\thesection.\arabic{theorem}}
\setcounter{theorem}{0}

\noindent Given a finite alphabet $\sf{A}$, we consider the set $\sf{A}^\bb{Z}$ of bi-infinite sequences in $\sf{A}$. Endowed with the product topology (where $\sf{A}$ is considered as a discrete space) $\sf{A}^\bb{Z}$ is a Cantor space, and a compatible metric for its topology is given by
\begin{equation}\label{eq:metric}
d(x,y) = 2^{-n},\quad x\ne y\in \sf{A}^\bb{Z},
\end{equation}
where $n\coloneqq\min\{ k\ge0: x_k\ne y_k \text{ or }x_{-k}\ne y_{-k}\}$ with $x=(x_i)_{i\in\bb{Z}}$ and $y=(y_i)_{i\in\bb{Z}}$. Similarly, let $\sf{A}^\bb{N}$ be the set of infinite sequences in $\sf{A}$ indexed by $\bb{N}=\{0,1,2,3,\dots\}$, which is also a Cantor space with the product topology.

We also consider the free monoid generated by $\sf{A}$, which is the set $\sf{A}^*$ of all finite words with letters from $\sf{A}$, endowed with the (associative) operation of concatenation, and with neutral element the empty word. For a word $v\in\sf{A}^*$ we let $|v|$ denote the length of $v$, and for $a\in \mathsf{A}$ we denote by $|v|_a$ the number of occurrences of $a$ in $v$. A word of length $0$ is by convention the empty word. For $w=x_1\dots x_{n}\in\sf{A}^*$ and $1\le k \le \ell \le n$, we write $w_{[k,\ell]}$ for the segment $x_kx_{k+1}\dots x_\ell$ of $w$, and we extend this notation in the obvious way to infinite or bi-infinite sequences on $\sf{A}$.

For $v\in \sf{A}^*$ and $w$ a word from $\sf{A}$ that is either finite, infinite, or bi-infinite, we write $v\prec w$ when $v$ is a \emph{subword} of $w$, i.e.\ $v$ appears as a segment in $w$. Clearly $\prec$ is a transitive relation on $\sf{A}^*$. An infinite or bi-infinite word $w$ is called \emph{uniformly recurrent} when for any $v\prec w$ there is $\ell\in\mathbb{N}$ such that if $v'\prec w$ with $|v'|\ge\ell$, then $v\prec v'$. In other words, $w$ is uniformly recurrent precisely when every finite subword of $w$ appears infinitely many times in $w$ with the gaps between any consecutive occurrences having bounded length. We say that $w\in\mathsf{A}^\bb{N}$ is \emph{periodic} when there is a finite word that is being repeated from some slot onward, i.e.\ $w=v_0vvv\dots$, for some finite words $v_0$ and $v$.

For $X \subset \sf{A}^\bb{Z}$ or $X \subset \sf{A}^\bb{N}$ or $X\subset \sf{A}^*$, we write $\mc{L}(X)$ for the \emph{language} of $X$, namely
\begin{equation}
\mc{L}(X)\coloneqq \{ v\in\mathsf{A}^*: \text{there is }x\in X\text{ such that }v\prec x\}.
\end{equation}
We write 
\begin{equation}\label{eq:shiftmap}
\varphi_\mathsf{A}\colon\sf{A}^\bb{Z}\to\sf{A}^\bb{Z}
\end{equation}
for the shift map $\varphi_\mathsf{A}(x_j)_{j\in\bb{Z}} \coloneqq (x_{j+1})_{j\in\bb{Z}}$ which we simply denote by $\varphi$ when there is no confusion. A \emph{subshift} on $\sf{A}$ is a closed subset $X\subset\sf{A}^\bb{Z}$ with $\varphi(X)=X$. Note that for a subshift $X$ we have an induced action $\bb{Z}\acts X$ implemented by the restriction of $\varphi$ to $X$. We say that $X$ is a \emph{minimal subshift} when it is a subshift such that the induced $\bb{Z}$ action by $\varphi$ is minimal (which is then also automatically free). Minimal subshifts that are infinite are Cantor spaces \cite{Bru22}. A subshift $X$ is minimal if and only if $\mc{L}(X)=\mc{L}(x)$ for all $x\in X$.

We obtain an action of the infinite dihedral group $\bb{Z}\rtimes\bb{Z}_2$ on $X$ from $\varphi$ together with a homeomorphism $\sigma\in\mathrm{Homeo}(X)$ satisfying $\sigma^2=\mathrm{id}$ and $\sigma\varphi\sigma=\varphi^{-1}$ (i.e.\ $\sigma$ \emph{anti-commutes} with $\varphi$), so that the homeomorphism corresponding to $(n,j)\in\bb{Z}\rtimes\bb{Z}_2$ is $\varphi^n\sigma^j$, and we write $(\varphi,\sigma)\colon\bb{Z}\rtimes\bb{Z}_2\acts X$.

We now consider $\mathsf{A}=\{\mathsf{0},\mathsf{1}\}$, and let $\{\mathsf{0},\mathsf{1}\}^*\ni v\mapsto \hat{v}\in\{\sf{0},\sf{1}\}^*$ be the map given by reversing and ``bit-swapping", i.e.\ if $v=x_1\dots x_{n}$ with $x_i\in\{\sf{0},\sf{1}\}$ for $i=1,\dots,n$, then $\hat{v} = (\mathsf{1}-x_{n})\dots(\mathsf{1}-x_1)$. We refer to the map $v\mapsto\hat{v}$ as \emph{anti-reversal}. A finite word $v\in\{\sf{0},\sf{1}\}^*$ is called an \emph{anti-palindrome} when it is a fixed point of anti-reversal, i.e.\ $v=\hat{v}$. We now define a specific $\sigma\in\mathrm{Homeo}(\{\mathsf{0},\mathsf{1}\}^\bb{Z})$ as
\begin{equation}
\sigma(x_j)_{j\in\bb{Z}}=(\mathsf{1} - x_{-j})_{j\in\bb{Z}},\quad (x_j)_{j\in\bb{Z}}\in\{\mathsf{0},\mathsf{1}\}^\bb{Z}.
\end{equation}
It is directly seen that $\sigma$ is a free involution that anti-commutes with the shift map $\varphi$. As for $\varphi\sigma$, the set of its fixed points is described as
\begin{equation}\label{eq:fix-phi-sigma}
\big\{\dots(\mathsf{1}-x_2)(\mathsf{1}-x_1)\; . \;x_1x_{2}\dots \colon  x_i\in\{\mathsf{0},\mathsf{1}\}\text{ for all }i\ge1 \big\},
\end{equation}
where the ``$.$" in \eqref{eq:fix-phi-sigma} is placed between slots $0$ and $1$.
Indeed, for $x=(x_j)_{j\in\bb{Z}}\in\{\mathsf{0},\mathsf{1}\}^\bb{Z}$ we have
\begin{align*}
\varphi\sigma(x_j)_{j\in\bb{Z}} &= \varphi(\mathsf{1}-x_{-j})_{j\in\bb{Z}}\\
&= (\mathsf{1}-x_{-j+1})_{j\in\bb{Z}}
\end{align*}
and so $x=\varphi\sigma(x)$ if and only if $x_j=\mathsf{1}-x_{-j+1}$ for all $j\in\bb{Z}$. We call such bi-infinite sequences \emph{anti-palindromic}.

\begin{lemma}\label{lem:good-subshifts}
Let $X \subset \{\mathsf{0},\mathsf{1}\}^\bb{Z}$ be a subshift and $\sigma$ the free involution defined above.
\begin{enumerate}[label=\normalfont{(\roman*)}]
\item\label{it:gs1} $\sigma(X)=X$ if and only if $\mc{L}(X)$ is closed under anti-reversal, i.e.\ $\widehat{\mc{L}(X)} = \mc{L}(X)$.
\item\label{it:gs2} $X$ contains a fixed point of $\varphi\sigma$ if and only if $\mc{L}(X)$ contains anti-palindromes of arbitrarily large length, i.e.\
\begin{equation*}
\sup\{|w|: w\in\mc{L}(X) \text{ is an anti-palindrome}\} = \infty.
\end{equation*}
In particular, if $X$ is a minimal subshift with $\sigma(X)=X$, then the action $(\varphi,\sigma)\colon \bb{Z}\rtimes\bb{Z}_2\acts X$ is free if and only if $\mc{L}(X)$ contains finitely many anti-palindromes.
\end{enumerate} 
\end{lemma}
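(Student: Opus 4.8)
The plan is to prove both equivalences by transferring between a bi-infinite sequence and its set of finite subwords, leaning on the standard fact that a subshift is recovered from its language: $x\in X$ iff every finite subword of $x$ lies in $\mc{L}(X)$, where the nonobvious direction comes from using shift-invariance of $X$ to build elements of $X$ agreeing with $x$ on larger and larger central windows and then invoking closedness of $X$. The two computational inputs I would record first are the identity $\sigma(x)_{[k,\ell]}=\widehat{x_{[-\ell,-k]}}$ for $k\le\ell$, and the already-noted reformulation that $\varphi\sigma(x)=x$ if and only if $x_j=\mathsf{1}-x_{1-j}$ for all $j\in\bb{Z}$, together with the remark that every anti-palindrome has even length (an odd one would have a central letter $a$ with $a=\mathsf{1}-a$).

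For \ref{it:gs1}, the forward direction is immediate from the displayed identity: a subword $x_{[k,\ell]}$ of some $x\in X$ has anti-reversal $\sigma(x)_{[-\ell,-k]}$, which is a subword of $\sigma(x)\in X$; since anti-reversal is an involution this yields $\widehat{\mc{L}(X)}=\mc{L}(X)$. The converse runs the same identity backwards: if $\widehat{\mc{L}(X)}=\mc{L}(X)$ then every finite subword of $\sigma(x)$ is an anti-reversal of a subword of $x$, hence lies in $\mc{L}(X)$, so $\sigma(x)\in X$ by the recovery fact; thus $\sigma(X)\subseteq X$ and equality follows from $\sigma^2=\mathrm{id}$.

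For \ref{it:gs2}, the forward direction is a direct index check: from $x_j=\mathsf{1}-x_{1-j}$ one sees that each central block $x_{[-n+1,n]}$ of a fixed point $x\in X$ is an anti-palindrome of length $2n$ lying in $\mc{L}(X)$. For the converse I would take anti-palindromes $v^{(n)}\in\mc{L}(X)$ of lengths $2m_n\to\infty$, use shift-invariance to place each $v^{(n)}$ symmetrically about the zeroth coordinate inside some $y^{(n)}\in X$ — so that the anti-palindrome relation reads $y^{(n)}_j=\mathsf{1}-y^{(n)}_{1-j}$ for $-m_n+1\le j\le m_n$ — then extract a convergent subsequence $y^{(n_k)}\to x\in X$ by compactness of $X$ and let $k\to\infty$ with $j$ fixed; since convergence in $\{\mathsf{0},\mathsf{1}\}^{\bb{Z}}$ is coordinatewise, this gives $x_j=\mathsf{1}-x_{1-j}$ for all $j$, i.e.\ $\varphi\sigma(x)=x$.

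For the final assertion, the nontrivial elements of $\bb{Z}\rtimes\bb{Z}_2$ are the translations $\varphi^n$ ($n\ne0$) and the reflections $\varphi^n\sigma$, and conjugation by a power of $\varphi$ (which preserves $X$) makes each reflection conjugate to $\sigma$ if $n$ is even and to $\varphi\sigma$ if $n$ is odd. Minimality of $X$ makes the $\bb{Z}$-action free, so the translations act freely on $X$, and $\sigma$ is a free involution already on $\{\mathsf{0},\mathsf{1}\}^{\bb{Z}}$ (its fixed-point equation forces $x_0=\mathsf{1}-x_0$), hence has no fixed point in $X$. Therefore $(\varphi,\sigma)$ is free on $X$ precisely when $\varphi\sigma$ has no fixed point in $X$, which by \ref{it:gs2} is equivalent to the anti-palindromes in $\mc{L}(X)$ having bounded length — and, since $\{\mathsf{0},\mathsf{1}\}$ is finite, that is the same as $\mc{L}(X)$ containing only finitely many anti-palindromes. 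I expect the only steps requiring real care to be the subshift-recovery fact and the compactness-and-limit argument in the converse of \ref{it:gs2}; the remainder is index bookkeeping plus elementary bookkeeping in $\bb{Z}\rtimes\bb{Z}_2$.
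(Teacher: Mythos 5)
Your proof is correct, and for parts (i) and (ii) it is essentially the paper's argument: the converse of (i) is the same window-by-window approximation using shift-invariance and closedness (you merely package it as the standard ``a subshift is determined by its language'' fact), and the converse of (ii) is the same centre-the-anti-palindrome-and-pass-to-a-cluster-point compactness argument, with the identical index bookkeeping $x_j=\mathsf{1}-x_{-j+1}$. Where you genuinely diverge is the final ``in particular'' assertion: the paper simply cites \cite[Proposition~2.8]{OrtSca23}, whereas you give a self-contained reduction, observing that conjugating $\varphi^n\sigma$ by $\varphi^k$ yields $\varphi^{n+2k}\sigma$, so freeness of the dihedral action comes down to fixed-point-freeness of $\varphi^n$ ($n\neq 0$), of $\sigma$ (automatic, since $\sigma(x)=x$ would force $x_0=\mathsf{1}-x_0$), and of $\varphi\sigma$ (handled by (ii)), plus the finite-alphabet remark that bounded length of anti-palindromes is the same as finitely many. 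This is correct and makes the lemma independent of the external reference; the only point to keep in mind is that freeness of the translations uses that a minimal subshift is infinite (so the $\bb{Z}$-action is free), which is exactly the convention the paper adopts in the paragraph preceding the lemma and which holds in the application.
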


\begin{proof}
\ref{it:gs1} Assume that $\widehat{\mc{L}(X)}=\mc{L}(X)$ and let $x\in X$. Let $N\in\bb{N}$ and set $v\coloneqq x_{[-N,N]}$. Since $\hat{v}\in\mc{L}(X)$, there is some $y\in X$ such that $\hat{v}\prec y$, and by replacing $y$ with $\varphi^k(y)$ for some $k\in\bb{Z}$ if necessary, we can assume without loss of generality that $y_{[-N,N]}=\hat{v}$. Since $\sigma(x)_{[-N,N]} = \hat{v}$, we have that $y$ approximates $\sigma(x)$ at a tolerance of $2^{-N}$. As $N\in\bb{N}$ was arbitrary and $X$ is closed, this shows that $\sigma(x)\in X$. The reverse implication is immediate.

\ref{it:gs2} Suppose that $\mc{L}(X)$ contains anti-palindromes of arbitrarily large length, say $(w_n)_{n=1}^\infty$ with $|w_n|\nearrow \infty$. Since anti-palindromes necessarily have even length, we write $|w_n|=2\ell_n$ for $n\ge1$. For each $n\ge1$ we thus obtain $x^{(n)} \in X$ such that $w_n\prec x^{(n)}$, and by shifting $x^{(n)}$ appropriately we can assume without loss of generality that $x^{(n)}_{[-\ell_n+1,\ell_n]}=w_n$. By compactness the sequence $(x^{(n)})_{n=1}^\infty$ has a cluster point $x\in X$, and by passing to a subsequence if necessary we can assume that $x^{(n)}\to x$. Let $j\ge1$ and pick $n\in\bb{N}$ large enough so that $\ell_n\ge j$ and $x_{[-j+1,j]}^{(n)}=x_{[-j+1,j]}$. Since 
\begin{align*}
w_n&=x_{-\ell_n+1}^{(n)}x_{-\ell_n+2}^{(n)}\dots x_{-1}^{(n)}x_0^{(n)}x_1^{(n)}\dots x_{\ell_n}^{(n)} \\
&= x_{-\ell_n+1}x_{-\ell_n+2}\dots x_{-1}x_0x_1\dots x_{\ell_n}
\end{align*}
 is an anti-palindrome, $x_i=\mathsf{1}-x_{-i+1}$ for all $-\ell_n+1\le i\le \ell_n$, and in particular $x_j=\mathsf{1}-x_{-j+1}$. As $j\ge1$ was arbitrary, we conclude that $x$ is an anti-palindromic sequence. 
 
 The ``only if" part of the statement is immediate.
 
  Finally, the last statement follows from \cite[Proposition~2.8]{OrtSca23}.
\end{proof}

Let $\underline{t} \in \{\sf{0},\sf{1}\}^\bb{N}$ be the so-called \emph{paper-folding sequence}, also known as the \emph{dragon curve sequence}, defined as the limit of the recursive rule $t_0=\sf{1}$, and $t_{n+1}=t_n \mathsf{1} \hat{t}_n$ for $n\in\bb{N}$: since $t_n$ is an initial segment of $t_{n+1}$ for each $n\ge1$ and $|t_n|=2^{n+1}-1\to\infty$, $\underline{t}$ is well-defined (note that $t_n = \underline{t}_{[0,2^{n+1}-2]}$ for all $n\in\bb{N}$). The words $t_0,\dots,t_5$ are listed below.
\begin{align*}
&\mathsf{1} \\
&\mathsf{110} \\
&\mathsf{1101100}\\
&\mathsf{110110011100100}\\
&\mathsf{1101100111001001110110001100100}\\
&\mathsf{110110011100100111011000110010011101100111001000110110001100100}
\end{align*}

Arguing similarly as in \cite{BerBoaCarFag09}, we prove the following facts about the paper-folding sequence.
\begin{proposition}\label{prop:pf}
Let $\underline{t}\in\{\mathsf{0},\mathsf{1}\}^\bb{N}$ be the paper-folding sequence. Then
\begin{enumerate}[label=\normalfont{(\roman*)}]
\item\label{it1pf} $\widehat{\mc{L}(\underline{t})}=\mc{L}(\underline{t})$, i.e.\ $v\in\mc{L}(\underline{t})$ if and only if $\hat{v}\in\mc{L}(\underline{t})$,
\item\label{it2pf} $\underline{t}$ is uniformly recurrent and not periodic, and
\item\label{it3pf} if $v\in\mc{L}(\underline{t})$ is an anti-palindrome, then $|v|\le 6$. In particular, $\mc{L}(\underline{t})$ contains finitely many anti-palindromes.
\end{enumerate}
\end{proposition}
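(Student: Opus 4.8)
The plan is to prove the three assertions in order, using throughout the recursion $t_{n+1}=t_n\mathsf{1}\hat{t}_n$, the elementary rule $\widehat{uvw}=\hat{w}\hat{v}\hat{u}$, and the fact that anti-reversal is an involution; together these give the useful identity $\hat{t}_{n+1}=\widehat{t_n\mathsf{1}\hat{t}_n}=t_n\mathsf{0}\hat{t}_n$. For \emph{(i)} I would first observe that $\hat{t}_n\prec\hat{t}_{n+1}\prec t_{n+2}$: the first containment is exactly the identity $\hat{t}_{n+1}=t_n\mathsf{0}\hat{t}_n$, and the second is immediate from $t_{n+2}=t_{n+1}\mathsf{1}\hat{t}_{n+1}$. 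Since every $t_m$ is an initial segment of $\underline{t}$, this gives $\hat{t}_n\prec\underline{t}$ for all $n$. Now if $v\in\mc{L}(\underline{t})$ I pick $n$ with $|v|\le|t_n|$, so that $v\prec t_n$; reversing and bit-swapping preserve the subword relation, hence $\hat{v}\prec\hat{t}_n\prec\underline{t}$, i.e.\ $\hat{v}\in\mc{L}(\underline{t})$, and the converse follows from $\widehat{\hat{v}}=v$.

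For \emph{(ii)} the key preliminary step is to record two ``folding'' identities, which I would prove by a single induction on $n$, the crucial input being that $|t_n|=2^{n+1}-1$ is \emph{odd} (so that in $t_{n+1}=t_n\mathsf{1}\hat{t}_n$ the central letter sits at an odd position and the reflected block $\hat{t}_n$ begins at an even position, with $(\hat{t}_n)_j=\mathsf{1}-(t_n)_{|t_n|-1-j}$):
\[
\underline{t}_{2m+1}=\underline{t}_m\ \ (m\ge0),\qquad \underline{t}_{2k}=\mathsf{1}\ \text{if}\ k\ \text{is even},\quad \underline{t}_{2k}=\mathsf{0}\ \text{if}\ k\ \text{is odd}.
\]
\emph{Non-periodicity:} if $\underline{t}$ were eventually periodic with period $p$, the first identity lets one halve $p$ whenever it is even, so I may assume $p$ is odd; then $4$ is invertible mod $p$, so there are arbitrarily large $m_1,m_2$ with $4m_1\equiv 4m_2+2\pmod p$, whence $\mathsf{1}=\underline{t}_{4m_1}=\underline{t}_{4m_2+2}=\mathsf{0}$, a contradiction. \emph{Uniform recurrence:} a second induction on $m$, again tracking the blockwise structure of $\hat{t}_m$, shows that for every $n$ the word $\underline{t}$ is a concatenation of copies of $t_n$ and $\hat{t}_n$ occurring in \emph{strictly alternating} order $t_n,\hat{t}_n,t_n,\hat{t}_n,\dots$ separated by single letters; in particular $t_n$ occurs at all positions $0,\,2^{n+2},\,2\!\cdot\!2^{n+2},\dots$, so every factor of $\underline{t}$ — being a factor of some $t_n$ — recurs with gaps at most $2^{n+2}$. (Alternatively one could invoke the well-known primitive substitutive structure of $\underline{t}$, but the direct route yields an explicit recurrence constant.)

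For \emph{(iii)}, which is the substantial part, I would first extract from the even-position identity that \emph{every alternating factor of $\underline{t}$ has length at most $3$}: among any four consecutive positions two are even and $2$ apart, hence lie in different residue classes mod $4$, hence carry different letters, which is incompatible with alternation. Now suppose $v=\underline{t}_{[a,a+2\ell-1]}$ is an anti-palindrome and set $c:=a+\ell-1$, so $v=\hat{v}$ reads $\underline{t}_p=\mathsf{1}-\underline{t}_{\,2c+1-p}$ for $p$ in the interval, the reflection $p\mapsto 2c+1-p$ being a parity-reversing involution of it. Taking $p=o=2m+1$ odd in the interval, its partner $2c+1-o=2(c-m)$ is even and also in the interval, so
\[
\underline{t}_m=\underline{t}_{2m+1}=\mathsf{1}-\underline{t}_{2(c-m)}=\begin{cases}\mathsf{0}&\text{if }c-m\text{ is even},\\[2pt]\mathsf{1}&\text{if }c-m\text{ is odd}.\end{cases}
\]
As $o$ ranges over the $\ell$ odd positions of $[a,a+2\ell-1]$, the index $m$ ranges over $\ell$ \emph{consecutive} integers, and the display exhibits $\underline{t}$ as alternating on that block; hence $\ell\le3$, i.e.\ $|v|\le6$. (All indices appearing are $\ge0$, as one checks from $a\ge0$, so no shift of $v$ is needed.) Since there are only finitely many words of length $\le6$, $\mc{L}(\underline{t})$ contains only finitely many anti-palindromes.

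The main obstacle is \emph{(iii)}, and more precisely isolating and correctly proving the two folding identities of \emph{(ii)} that power it: each is only a routine induction, but the bookkeeping for the reflected blocks $\hat{t}_n$ (parities of block lengths, the letter swap $(\hat{t}_n)_j=\mathsf{1}-(t_n)_{|t_n|-1-j}$, and the strict-alternation claim) is exactly where care is required; once they are in hand, the anti-palindrome/alternation dichotomy closing \emph{(iii)} is short. A secondary point to verify carefully is the strict alternation of the level-$n$ block decomposition, since that is what makes uniform recurrence drop out with an explicit constant.
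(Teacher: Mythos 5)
Your proposal is correct, but it takes a genuinely different route from the paper for the two substantial points. The paper works throughout with the self-similar block decomposition $t_{p+n+1}=t_p x_1 \hat{t}_p x_2 t_p\cdots x_{2^{n+1}-1}\hat{t}_p$ (its equation \eqref{eq:pf-self-similarity}): uniform recurrence falls out of it as in your argument, but non-periodicity is obtained by a somewhat delicate alignment argument ending in $t_c=\hat{t}_c$, contradicted by $|t_c|_\mathsf{1}=|t_c|_\mathsf{0}+1$, and part \ref{it3pf} is reduced to the absence of length-$8$ anti-palindromes, proved by induction on the folding level together with explicit inspection of $t_4$ and of the middle window $\hat{t}_3\mathsf{1}t_3$. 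You instead base everything on the standard arithmetic characterisation of the paper-folding sequence, $\underline{t}_{2m+1}=\underline{t}_m$ and $\underline{t}_{4k}=\mathsf{1}$, $\underline{t}_{4k+2}=\mathsf{0}$ (both correct for the paper's $0$-indexed $\underline{t}$, and provable by the induction you sketch); non-periodicity then follows from a clean congruence argument after halving the period, and for \ref{it3pf} you convert an anti-palindrome of length $2\ell$ into an alternating factor of length $\ell$ and bound alternating factors by $3$, which gives the sharp bound $|v|\le 6$ with no finite case-checking. Your route buys a shorter, inspection-free proof of \ref{it3pf} and a slicker non-periodicity argument, at the price of first establishing the positional identities; the paper's route never leaves the fold recursion and needs only the one structural identity. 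The single point you should spell out is the inductive step for $\underline{t}_{2m+1}=\underline{t}_m$ at odd positions inside the reflected block $\hat{t}_n$ of $t_{n+1}$: there it is not enough to quote the induction hypothesis, one also needs that letters of $t_n$ symmetric about its central position are complementary -- this is immediate from $t_n=t_{n-1}\mathsf{1}\hat{t}_{n-1}$, so it is bookkeeping rather than a gap, but it is exactly the care you yourself flag as required. Uniform recurrence in your write-up is essentially the paper's argument, since your ``strictly alternating blocks separated by single letters'' claim is precisely \eqref{eq:pf-self-similarity}.
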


\begin{proof}
\ref{it1pf} If $v\in\mc{L}(\underline{t})$, then $v\prec t_n$ for some $n\in\bb{N}$, and so $\hat{v}\prec \hat{t}_n\prec t_{n+1}\prec \underline{t}$.

\ref{it2pf} We claim that for $p,n\in\mathbb{N}$, if $t_n=x_1\dots x_{2^{n+1}-1}$ ($x_i\in\{\mathsf{0},\mathsf{1}\}$), then
\begin{equation}\label{eq:pf-self-similarity}
t_{p+n+1} = t_p x_1 \hat{t}_p x_2 t_p x_3 \hat{t}_p\dots x_{2^{n+1}-2} t_p x_{2^{n+1}-1}\hat{t}_p.
\end{equation}
Fix $p\in\mathbb{N}$; we shall prove the claim by induction on $n$. For $n=0$, this is clear by definition, since $t_{p+1} = t_p\mathsf{1}\hat{t}_p$ (and $\mathsf{1}=t_0$). Assume that the claim has been proved for some $n\in\bb{N}$, so 
\[
t_{p+n+1} = t_p x_1 \hat{t}_p x_2 t_p x_3 \hat{t}_p\dots x_{2^{n+1}-2} t_p x_{2^{n+1}-1}\hat{t}_p
\]
with $t_n = x_1\dots x_{2^{n+1}-1}$. Then, since $t_{p+n+2} = t_{p+n+1} \mathsf{1} \hat{t}_{p+n+1}$,
\begin{align*}
t_{p+n+2} &= t_px_1\hat{t}_px_2t_p\dots t_px_{2^{n+1}-1}\hat{t}_p\mathsf{1} t_p \hat{x}_{2^{n+1}-1} \hat{t}_p \dots \hat{t}_p \hat{x}_2 t_p \hat{x}_1 \hat{t}_p
\end{align*}
and note that 
\begin{align*}
t_{n+1}&=t_{n}\mathsf{1}\hat{t}_{n} \\
&=x_1x_2\dots x_{2^{n+1}-1}\mathsf{1} \hat{x}_{2^{n+1}-1}\dots\hat{x}_2\hat{x}_1
\end{align*}
as we wanted, and so \eqref{eq:pf-self-similarity} holds.

To see that $\underline{t}$ is uniformly recurrent, let $v\prec \underline{t}$ be a finite subword. By definition, there is $p\in\bb{N}$ such that $v\prec t_p$. Let $w\prec \underline{t}$ be any subword with $|w|=3\cdot 2^{p+1}$. Let $N\ge p+2$ be such that $w\prec t_N$ and write $N=p+n+1$ for some $n\ge1$. Note that by \eqref{eq:pf-self-similarity} and since $|w|=3\cdot 2^{p+1}$, we must have that $t_p\prec w$ and thus $v\prec w$ as we wanted.

 Assume now that $\underline{t}$ is periodic, i.e.\ $\underline{t}=wvvv\dots$ for some $w,v\in\{\mathsf{0,1}\}^*$. By replacing $v$ with $vv$, we can assume without loss of generality that the length of $v$ is even. Let $c,d\ge0$ be such that $|v| = 2^{c+1}(2d+1)$. By replacing $v$ with some cyclic permutation of its letters if necessary (and thus without affecting its length), we can assume without loss of generality that $w=t_{cq}$ for some $q\ge1$. By \eqref{eq:pf-self-similarity} we have that
\begin{align*}
t_{cq+d+1} &= t_{c+ c(q-1)+d+1} \\
&=t_c x_1 \hat{t}_c \dots t_c x_{2^{c(q-1)}-1}\hat{t}_c  x_{2^{c(q-1)}} t_c x_{2^{c(q-1)}+1} \hat{t}_c \dots t_c x_{2^{c(q-1)+d+1}-1}\hat{t}_c,
\end{align*}
with $x_1\dots x_{2^{c(q-1)+d+1}-1}=t_{c(q-1)+d}$. Note that $t_{cq+d+1}$ is an initial segment of $\underline{t}$, and its initial segment 
\[
t_cx_1\hat{t}_c \dots t_cx_{2^{c(q-1)}-1}\hat{t}_c
\]
has length $2^{cq+1}-1$, and so is equal to $t_{cq}=w$. Therefore, the periodic part $vvv\dots$ has $x_{2^{c(q-1)}} t_c x_{2^{c(q-1)}+1} \hat{t}_c \dots t_c x_{2^{c(q-1)+d+1}-1}\hat{t}_c $ as an initial segment, which in turn has $x_{2^{c(q-1)}}t_cx_{2^{c(q-1)}+1} \hat{t}_cx_{2^{c(q-1)}+2}\dots \hat{t}_cx_{2^{c(q-1)}+2d} t_c$ as an initial segment of length $2^{c+1}(2d+1)=|v|$, so 
\[
v=x_{2^{c(q-1)}}t_cx_{2^{c(q-1)}+1} \hat{t}_cx_{2^{c(q-1)}+2}\dots \hat{t}_cx_{2^{c(q-1)}+2d} t_c.
\]
This is succeeded by $x_{2^{c(q-1)}+2d+1}\hat{t}_c$ which is an initial segment of $v$, and so it follows that $t_c=\hat{t}_c$. This is a contradiction, as this would imply that $|t_c|_\mathsf{0}=|t_c|_\mathsf{1}$, which is never the case since $|t_c|_\mathsf{1}=|t_c|_\mathsf{0}+1$ by construction.

\ref{it3pf} Note first that any anti-palindrome has even length. Now if $v=v_1\dots v_{2n}\in\{\mathsf{0},\mathsf{1}\}^*$ is an anti-palindrome of length $2n>8$, then 
\[
v_{n-3}v_{n-2}v_{n-1}v_nv_{n+1}v_{n+2}v_{n+3}v_{n+4}
\]
is an anti-palindrome of length $8$ that is a subword of $v$. It thus suffices to show that $\underline{t}$ contains no anti-palindromes of length $8$, i.e.\ that none of the $t_p$ contain anti-palindromes of length $8$. We directly see that
\[
t_4 = \mathsf{1101100111001001110110001100100}
\]
contains no anti-palindromes of length $8$. We continue with induction: assume we have shown that $t_p$ contains no anti-palindromes of length $8$ for some $p\ge 4$. By \eqref{eq:pf-self-similarity} we have $t_{p} = t_3 s \hat{t}_3$ for some $s\prec \underline{t}$ (with $|s|= 2^{p+1}-2^5+1$), and $t_{p+1} = t_3 s \hat{t}_3 \mathsf{1} t_3 \hat{s} \hat{t}_3$. Since $|t_3| = 2^4-1=15$, if an anti-palindrome of length $8$ occurs as a subword of $t_{p+1}$, then it is either a subword of $t_3s\hat{t}_3=t_p$, or a subword of $\hat{t}_3\mathsf{1} t_3$, or a subword of $t_3\hat{s}\hat{t}_3=\hat{t}_p$. The first case and the third case are both impossible due to the inductive hypothesis (and upon noting that if $v\prec \hat{t}_p$ is an anti-palindrome of length $8$ then $v=\hat{v}\prec t_p$ is an anti-palindrome of length $8$). As for the second case, we directly see that
\[
\hat{t}_3\mathsf{1}t_3 = \mathsf{110110001100100\;1\;110110011100100}
\]
contains no anti-palindromes of length $8$, and thus $t_{p+1}$ contains no anti-palindromes of length $8$. The proof is complete by induction.
\end{proof}

\begin{proposition}\label{lem:bi-infinite-t}
Let $\mathsf{A}$ be a finite alphabet and let $\underline{s}\in \mathsf{A}^\bb{N}$ be a non-periodic, uniformly recurrent infinite word. If
\[
X\coloneqq \{x\in\mathsf{A}^\bb{Z}: \mathcal{L}(x)\subset \mathcal{L}(\underline{s})\},
\]
then $X$ contains a bi-infinite sequence $\tilde{x}$ with $\tilde{x}_{[0,\infty)}=\underline{s}$. Moreover, $X = \{x\in\mathsf{A}^\bb{Z}: \mathcal{L}(x) =  \mathcal{L}(\underline{s})\}$, and $X$ is a minimal subshift and a Cantor space.
\end{proposition}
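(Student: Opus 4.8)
The plan is to prove the three assertions in order; the third one will turn out to be almost formal once the ``language rigidity'' statement $X = \{x \in \mathsf{A}^{\mathbb{Z}} : \mathcal{L}(x) = \mathcal{L}(\underline{s})\}$ is in hand, using the standard criterion recalled earlier in this section (a subshift $X$ is minimal iff $\mathcal{L}(X) = \mathcal{L}(x)$ for every $x \in X$) together with the cited result that infinite minimal subshifts are Cantor spaces.

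First I would construct $\tilde{x}$ by a compactness argument. Since $\underline{s}$ is uniformly recurrent (here mere recurrence suffices), each prefix $\underline{s}_{[0,m]}$ reoccurs, so I may pick $p_m \ge m$ with $\underline{s}_{[p_m,\,p_m+m]} = \underline{s}_{[0,m]}$. Let $y^{(m)} \in \mathsf{A}^{\mathbb{Z}}$ be the bi-infinite sequence given by $y^{(m)}_j = \underline{s}_{p_m+j}$ for $j \ge -m$ (well-defined since $p_m + j \ge 0$) and $y^{(m)}_j = \mathsf{a}$ for $j < -m$, with $\mathsf{a} \in \mathsf{A}$ fixed. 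Passing to a convergent subsequence $y^{(m_k)} \to \tilde{x}$ in the compact space $\mathsf{A}^{\mathbb{Z}}$, I would check (a) that $\tilde{x}_{[0,n]} = y^{(m_k)}_{[0,n]} = \underline{s}_{[0,n]}$ for $k$ large, whence $\tilde{x}_{[0,\infty)} = \underline{s}$; and (b) that $\mathcal{L}(\tilde{x}) \subset \mathcal{L}(\underline{s})$, because any finite window of $\tilde{x}$ eventually coincides with a window of some $y^{(m_k)}$ that lies inside the range $[-m_k,\infty)$, on which $y^{(m_k)}$ is literally a segment of $\underline{s}$. The one point requiring care is that the arbitrary ``tail'' on the negative coordinates $j < -m$ never enters the limiting language — which is exactly why I insist on $p_m \to \infty$. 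Thus $\tilde{x} \in X$, and in particular $X \ne \emptyset$.

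Next I would prove the language rigidity. The inclusion $\{x : \mathcal{L}(x) = \mathcal{L}(\underline{s})\} \subset X$ is trivial. For the converse, fix $x \in X$ and $v \prec \underline{s}$; by uniform recurrence there is $\ell \in \mathbb{N}$ such that every subword of $\underline{s}$ of length $\ge \ell$ contains $v$. The window $x_{[0,\ell-1]}$ has length $\ell$ and lies in $\mathcal{L}(x) \subset \mathcal{L}(\underline{s})$, so it is a length-$\ell$ subword of $\underline{s}$ and hence contains $v$; therefore $v \prec x$. So $\mathcal{L}(\underline{s}) \subset \mathcal{L}(x)$, and together with $x \in X$ this gives $\mathcal{L}(x) = \mathcal{L}(\underline{s})$. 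I expect this step to be the conceptual heart of the proposition: although the argument is short, it is precisely here that uniform recurrence — as opposed to mere recurrence — is indispensable, since otherwise $X$ could contain sequences with a strictly smaller language and the statement would fail.

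For the final assertion: $X$ is closed (a limit of points of $X$ has every finite window equal to a window of some member of $X$, hence in $\mathcal{L}(\underline{s})$) and $\varphi$-invariant since $\mathcal{L}(\varphi x) = \mathcal{L}(x)$, so with $X \ne \emptyset$ it is a subshift. By the rigidity just proved, $\mathcal{L}(x) = \mathcal{L}(\underline{s}) = \mathcal{L}(X)$ for every $x \in X$, so $X$ is minimal. Finally, $X$ is infinite: $\varphi$ restricts to a permutation of a finite subshift, so every point of a finite subshift is periodic, whereas $\tilde{x}$ cannot be periodic (else $\underline{s} = \tilde{x}_{[0,\infty)}$ would be periodic too, against hypothesis). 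An infinite minimal subshift is a Cantor space by \cite{Bru22}, which completes the proof. Beyond keeping the bookkeeping in the compactness step honest, I do not anticipate any real obstacle — the proposition is essentially an unpacking of the definition of uniform recurrence.
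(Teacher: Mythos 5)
Your proof is correct, and apart from one step it mirrors the paper's argument: the language rigidity via uniform recurrence (using the length-$\ell$ window $x_{[0,\ell-1]}$), closedness and shift-invariance, minimality via the criterion $\mathcal{L}(X)=\mathcal{L}(x)$ for all $x\in X$, and infiniteness from non-periodicity of $\underline{s}$ are all the same as in the paper. The one genuine difference is the construction of $\tilde{x}$: the paper extends $\underline{s}$ to the left one letter at a time, using a pigeonhole/maximum argument over the finite alphabet to show that some letter $a$ satisfies $a\tilde{x}_{j+1}\dots\tilde{x}_{j+n}\in\mathcal{L}(\underline{s})$ for \emph{all} $n$ (recurrence supplying a left-extension of each finite window), whereas you shift deep reoccurrences of long prefixes of $\underline{s}$ to the origin and extract a limit point by compactness of $\mathsf{A}^{\mathbb{Z}}$. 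Both routes are standard and need only recurrence (not uniform recurrence) for this step; the recursive extension avoids subsequences and padding but needs the small argument that failure for every letter propagates to a common length $n=\max_a n_a$, while your compactness argument trades that for the bookkeeping you flag (choosing $p_m\ge m$, and letting $m_k\to\infty$ so the artificial tail never enters a fixed window) — which you handle correctly. One cosmetic remark: for the infiniteness of $X$ you argue via ``finite subshifts consist of periodic points''; the paper phrases it as the orbit $\{\varphi^k(\tilde{x})\}_{k\ge 0}$ being infinite, which is the same point.
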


\begin{proof}
To define such an $\tilde{x}=(\tilde{x}_j)_{j\in\bb{Z}}$, we first define $\tilde{x}_j$ for $j\ge0$ such that $\tilde{x}_{[0,\infty)}=s$. For $j\le -1$, we will define $\tilde{x}_j$ recursively so that $\tilde{x}_j\dots \tilde{x}_{j+n}\in\mathcal{L}(\underline{s})$ for all $n\in\bb{N}$. This suffices to conclude that $\mathcal{L}(\tilde{x})=\mathcal{L}(\underline{s})$. Let $j\le -1$ and assume that $\tilde{x}_{j+1},\tilde{x}_{j+2},\dots$ have all been defined so that
\[
\tilde{x}_{j+1}\dots \tilde{x}_{j+n}\in\mathcal{L}(\underline{s})
\]
for all $n\ge1$. We claim that there is an $a\in\mathsf{A}$ such that $a\tilde{x}_{j+1}\dots \tilde{x}_{j+n}\in\mathcal{L}(\underline{s})$ for all $n\ge1$. Indeed, if this was not the case, then for any $a\in \mathsf{A}$ there is some $n_a\ge1$ such that $a\tilde{x}_{j+1}\dots \tilde{x}_{j+n_a}\not\in\mathcal{L}(\underline{s})$. Setting $n\coloneqq \max\{n_a:a\in\mathsf{A}\}$, we conclude that $a\tilde{x}_{j+1}\dots \tilde{x}_{j+n}\not\in\mathcal{L}(\underline{s})$ for all $a\in\mathsf{A}$. However, since $\underline{s}$ is (uniformly) recurrent and $\tilde{x}_{j+1}\dots \tilde{x}_{j+n}$ is a subword of $\underline{s}$, this finite subword occurs in $\underline{s}$ preceded by some letter $a\in\mathsf{A}$, and so $a\tilde{x}_{j+1}\dots \tilde{x}_{j+n}\in\mathcal{L}(\underline{s})$, which is a contradiction.

It follows from the preceding paragraph that $X$ is nonempty, and it is actually infinite, since $\{\varphi^k(\tilde{x})\}_{k\ge0}$ is infinite, due to $\underline{s}$ being non-periodic. Note that $\varphi(X)=X$ since $\mc{L}(\varphi(x))=\mc{L}(x)$ for any $x\in\mathsf{A}^\bb{Z}$. That $X$ is closed follows from the fact that $\underline{s}$ is uniformly recurrent: let $(x^{(m)})_{m=1}^\infty\subset X$ be a sequence converging to $x\in\mathsf{A}^\bb{Z}$, let $v\prec x$ and let $N\in\bb{N}$ be such that $v\prec x_{[-N,N]}$. By taking $m$ large enough, $x^{(m)}$ agrees with $x$ on the slots $[-N,N]$ and so $v\prec x^{(m)}$, hence $v\in\mc{L}(x^{(m)})=\mc{L}(\underline{s})$, which shows that $\mc{L}(x)\subset\mc{L}(\underline{s})$ and so $x\in X$. 

Now if $x\in X$ and $v\in\mc{L}(\underline{s})$, since $\underline{s}$ is uniformly recurrent there is $\ell\in\bb{N}$ such that if $w\prec \underline{s}$ with $|w|\ge\ell$, then $v\prec w$. Since $x_{[1,\ell]}\in\mc{L}(x)\subset\mc{L}(\underline{s})$ has length $\ell$, we get that $v\prec x_{[1,\ell]}\prec x$, and thus $\mc{L}(\underline{s})\subset\mc{L}(x)$. This shows that $X = \{ x\in \mathsf{A}^\bb{Z}: \mc{L}(x)=\mc{L}(\underline{s})\}$, and thus $X$ is minimal since $\mc{L}(x)=\mc{L}(X)$ for all $x\in X$. In particular $X$ is a Cantor space as an infinite minimal subshift.
\end{proof}

\begin{definition}\label{def:pf-subshift}
With $\underline{t}\in\{\mathsf{0},\mathsf{1}\}^\bb{N}$ the paper-folding sequence, the \emph{paper-folding subshift} is defined as
\begin{align}\label{eq:pf-subshift}
\underline{X}&\coloneqq\{x\in\{\mathsf{0},\mathsf{1}\}^\bb{Z}: \mc{L}(x) \subset \mc{L}(\underline{t})\}\\
&\;=\{x\in\{\mathsf{0},\mathsf{1}\}^\bb{Z}: \mc{L}(x) = \mc{L}(\underline{t})\}.
\end{align}
\end{definition}

Combining \Cref{lem:good-subshifts}, \Cref{prop:pf} and \Cref{lem:bi-infinite-t}, we obtain the following corollary.

\begin{corollary}\label{cor:free-min-dihedral-pf}
The shift map $\varphi$ and the anti-reversal involution $\sigma$ restricted to the Cantor space $\underline{X}$ define a free minimal action $(\varphi,\sigma)\colon\bb{Z}\rtimes\bb{Z}_2\acts \underline{X}$.
\end{corollary}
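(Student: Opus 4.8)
The plan is to assemble the statement directly from the three preceding results, so that essentially no new argument is required. First I would record that the paper-folding sequence $\underline{t}$ is non-periodic and uniformly recurrent, which is precisely \Cref{prop:pf}\ref{it2pf}. This lets me apply \Cref{lem:bi-infinite-t} with $\underline{s}=\underline{t}$, concluding at once that the set $\underline{X}$ of \Cref{def:pf-subshift} is a minimal subshift and a Cantor space, and moreover that $\mc{L}(\underline{X})=\mc{L}(\underline{t})$ (as also recorded in the definition); in particular the induced $\bb{Z}$-action by $\varphi$ is minimal, hence free.

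Next I would bring in the anti-reversal symmetry. By \Cref{prop:pf}\ref{it1pf} we have $\widehat{\mc{L}(\underline{t})}=\mc{L}(\underline{t})$, hence $\widehat{\mc{L}(\underline{X})}=\mc{L}(\underline{X})$, and therefore \Cref{lem:good-subshifts}\ref{it:gs1} gives $\sigma(\underline{X})=\underline{X}$. Since $\sigma$ is a free involution anti-commuting with $\varphi$ and both maps preserve $\underline{X}$, this produces the action $(\varphi,\sigma)\colon\bb{Z}\rtimes\bb{Z}_2\acts\underline{X}$ as set up in \Cref{sec:pf}. Minimality of this action is immediate from minimality of the $\bb{Z}$-action already noted, since $\bb{Z}\le\bb{Z}\rtimes\bb{Z}_2$ and every dense $\bb{Z}$-orbit is a fortiori a dense $\bb{Z}\rtimes\bb{Z}_2$-orbit.

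It remains to deduce freeness of the full dihedral action, and for this I would invoke the last sentence of \Cref{lem:good-subshifts}\ref{it:gs2}: since $\underline{X}$ is a minimal subshift with $\sigma(\underline{X})=\underline{X}$, the action $(\varphi,\sigma)$ is free exactly when $\mc{L}(\underline{X})=\mc{L}(\underline{t})$ contains only finitely many anti-palindromes. This is precisely \Cref{prop:pf}\ref{it3pf} — in fact every anti-palindrome in $\mc{L}(\underline{t})$ has length at most $6$ — which completes the proof. There is no genuine obstacle at this stage: all the substance has been front-loaded into \Cref{prop:pf}, and it is the finiteness of the anti-palindromes in part \ref{it3pf} that is the real dynamical input forcing freeness, the remainder being bookkeeping about languages and invariance.
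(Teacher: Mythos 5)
Your proposal is correct and follows exactly the paper's route: the paper proves this corollary simply by combining \Cref{lem:good-subshifts}, \Cref{prop:pf} and \Cref{lem:bi-infinite-t}, which is precisely the assembly you carry out (including using the last statement of \Cref{lem:good-subshifts}\ref{it:gs2} together with the finiteness of anti-palindromes from \Cref{prop:pf}\ref{it3pf} for freeness). No gaps; your write-up just makes the bookkeeping explicit.
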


\section{K-theory computations}

\noindent In this section we compute the $\mathrm{K}$-theory of the crossed product $\mathrm{C}^\ast$-algebra associated to the action of \Cref{cor:free-min-dihedral-pf}. We first compute (as a scaled ordered group) the  $\mathrm{K}_0$-group of the crossed product $\mathrm{C}^\ast$-algebra of the Cantor minimal system $\varphi\colon\bb{Z}\acts \underline{X}$ given by the shift map restricted to the paper-folding subshift of \Cref{def:pf-subshift}. The reader is referred to \cite{Bla06} for elements of $\mathrm{K}$-theory of $\mathrm{C}^\ast$-algebras as well as the relevant terminology and notation.

In order to compute $\mathrm{K}_0(C(\underline{X})\rtimes_\varphi\bb{Z})$ we define an auxiliary \emph{substitution subshift} over the four-letter alphabet $\mathsf{A}\coloneqq\{\mathsf{0},\mathsf{1},\mathsf{2},\mathsf{3}\}$. Consider the map $\varrho\colon \mathsf{A}^*\to\mathsf{A}^*$ defined on the generators (i.e.\ the letters) as
\begin{equation}\label{eq:rho}
\varrho\colon\begin{cases}\mathsf{3}\longrightarrow \mathsf{31} \\ \mathsf{2}\longrightarrow \mathsf{30} \\ \mathsf{1}\longrightarrow \mathsf{21}\\ \mathsf{0}\longrightarrow \mathsf{20}\end{cases}
\end{equation}
and extended on $\mathsf{A}^*$ as $\varrho(x_1\dots x_n)\coloneqq\varrho(x_1)\dots\varrho(x_n)$ for all $n\ge1$ and $x_1,\dots,x_n\in\mathsf{A}$. Setting
\begin{equation}
\mc{L}_\varrho \coloneqq \bigcup_{n\ge1}\bigcup_{a\in\mathsf{A}}\mc{L}(\varrho^n(a)),
\end{equation}
we consider the subshift (cf.\ \cite[\S 3.3.1]{DurHosSka99})
\begin{equation}
X_\varrho \coloneqq \big\{ y\in\mathsf{A}^\bb{Z}: \mc{L}(y) \subset \mc{L}_\varrho \big\}.
\end{equation}
The map $\varrho$ is a \emph{primitive substitution}, in the sense that there is $n\in\bb{N}$ such that for any seed letter $a\in\mathsf{A}$, every letter of $\sf{A}$ appears in the iteration $\varrho^n(a)$ (in our case one can take $n=3$), and the length of words arising as iterations of $\varrho$ increases to infinity. It is well-known that primitive substitutions give rise to minimal and \emph{uniquely ergodic} systems (cf.\ \cite[\S 3.3.1]{DurHosSka99}) where the latter means that there is a unique invariant Borel probability measure on the space. In particular, $\varphi\colon \bb{Z}\acts X_\varrho$ is minimal and uniquely ergodic.

\begin{remark}\label{rmk:r}
Since $\varrho$ is a primitive substitution and since $\varrho^n(a)$ begins with the same letter (namely $\mathsf{3}$) for all $a\in\mathsf{A}$ and $n\ge2$, the iterations of $\varrho$ (independently of the seed letter) converge to an infinite word $\underline{r}\in\mathsf{A}^\bb{N}$ (cf.\ \cite[Section~1.2]{Fog02}), the first few digits of which read as follows:
\begin{equation*}
\underline{r}=\sf{31213021312030213121302031203021}\dots
\end{equation*}
By definition, $\mc{L}_\varrho=\mc{L}(\underline{r})$. The word $\underline{r}$ is closely related to the paper-folding sequence $\underline{t}\in\{\mathsf{0},\mathsf{1}\}^\bb{N}$ -- see \cite[Example~4.10, p.\ 138]{Bru22}. In particular, observe that starting from the $0$th slot of $\underline{t}$ and replacing two-letter blocks according to the rule $xy\mapsto \mathsf{2}x+y$ (i.e.\ $\mathsf{00}\mapsto\mathsf{0}$, $\mathsf{01}\mapsto\mathsf{1}$, $\mathsf{10}\mapsto \mathsf{2}$ and $\mathsf{11}\mapsto\mathsf{3}$), one obtains $\underline{r}$. It follows from this observation that, since $\underline{t}$ is uniformly recurrent and non-periodic, also $\underline{r}$ is uniformly recurrent and non-periodic.
\end{remark}

According to \cite[\S 6.4]{DurHosSka99}, we associate to $X_\varrho$ the $4\times 4$ matrix $M$ with non-negative integer entries, indexed over $\{\mathsf{0},\mathsf{1},\mathsf{2},\mathsf{3}\}$, where the $(i,j)$ entry of $M$ is defined as the number of occurrences of the letter $j$ in the word $\varrho(i)$. Inspecting \eqref{eq:rho}, we see that
\begin{equation}\label{eq:matrix-of-rho}
M = \begin{pmatrix} 1 & 0 & 1 & 0 \\ 0 &1 & 1 & 0 \\ 1 & 0 & 0 & 1 \\ 0 & 1 &0 & 1 \end{pmatrix}.
\end{equation} 

\begin{proposition}\label{prop:K-theory-rho}
With $\varphi\colon\bb{Z}\acts X_\varrho$ the substitution subshift defined above,  $\mathrm{K}_0(C(X_\varrho)\rtimes_\varphi\bb{Z})$ is isomorphic as a scaled ordered group to
\begin{equation}\label{eq:dyadic-oplus-z}
\big(\bb{Z}[\tfrac{1}{2}]\oplus\bb{Z}, \{(s,n): s>0,n\in\bb{Z}\}\cup\{(0,0)\},(1,0)\big).
\end{equation}
\end{proposition}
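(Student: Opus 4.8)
The plan is to compute $\mathrm{K}_0(C(X_\varrho)\rtimes_\varphi\bb{Z})$ via the Pimsner--Voiculescu exact sequence, using the fact that for a Cantor minimal system $(X,\varphi)$ the crossed product has $\mathrm{K}_1 \cong \bb{Z}$ (generated by the unitary implementing $\bb{Z}$) and $\mathrm{K}_0 \cong C(X,\bb{Z})/(\mathrm{id}-\varphi^{-1})^*C(X,\bb{Z})$, ordered by the image of strictly positive (or zero) functions and scaled by the class of $1$; this description of the dimension-range invariant is exactly the content of the Bratteli--Herman/GPS picture, and it is also precisely what the substitution-subshift machinery of \cite[\S 6.4]{DurHosSka99} is designed to exploit. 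So the first step is to invoke that machinery: for a primitive aperiodic substitution the group $C(X_\varrho,\bb{Z})/(\mathrm{id}-\varphi^{-1})^*C(X_\varrho,\bb{Z})$ is computed as the stationary inductive limit $\varinjlim(\bb{Z}^4, M^{\mathrm{t}})$ (or, depending on the normalisation convention in the cited source, $M$ itself), with the positive cone and scale inherited from the coordinatewise order on $\bb{Z}^4$ via the Bratteli diagram with incidence matrix $M$. I would state this as the starting point, citing \cite[\S 6.4]{DurHosSka99} and \Cref{rmk:r} for primitivity and aperiodicity of $\varrho$.

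The second step is the linear algebra of the stationary system $\bb{Z}^4 \xrightarrow{M^{\mathrm{t}}} \bb{Z}^4 \xrightarrow{M^{\mathrm{t}}} \cdots$. From \eqref{eq:matrix-of-rho} one reads off the characteristic polynomial of $M$: the column sums are all $2$, so $(1,1,1,1)$ is a left eigenvector with eigenvalue $2$, reflecting that $\varrho$ is $2$-uniform. The remaining eigenvalues I would compute directly — I expect $M$ to have eigenvalues $2, 1, 1, -1$ or $2, 1, -1, \dots$; in any case the key point is that $\det M = \pm 2$ (indeed $\det M = -2$ by direct expansion), so $M$ is invertible over $\bb{Z}[\tfrac12]$, and the inductive limit stabilises the generalised eigenspaces. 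The generalised eigenspace for the eigenvalue $2$ contributes a copy of $\bb{Z}[\tfrac12]$ in the limit (since multiplication by $2$ becomes invertible), while the eigenvalues of absolute value $1$ contribute torsion-free pieces on which $M$ acts as an automorphism of finite order, so they survive to a finitely generated free part. I would organise this by choosing an explicit change of basis over $\bb{Q}$ (or better over $\bb{Z}[\tfrac12]$) that block-diagonalises $M^{\mathrm{t}}$ into a $1\times 1$ block $(2)$ and a complementary block $N$ with $\det N = \pm 1$, and then argue $\varinjlim(\bb{Z}^4, M^{\mathrm{t}}) \cong \bb{Z}[\tfrac12] \oplus \varinjlim(\bb{Z}^3, N)$; the latter is $\bb{Z}^3$ modulo the eventual image, which I expect to collapse to a single $\bb{Z}$ once the nilpotent/unipotent part is quotiented out — this is where the precise Jordan structure of $N$ (the multiplicity of the eigenvalue $1$, and whether the block is diagonalisable) has to be pinned down, and that is the main computational obstacle: getting the rank of the free summand exactly equal to $1$ rather than $2$ or $3$.

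The third step is to match the order and scale. Since $M$ (equivalently $M^{\mathrm{t}}$) is a primitive nonnegative matrix, by Perron--Frobenius its Perron eigenvalue is $2$ with a strictly positive eigenvector, and the order on the inductive limit is the one for which an element is positive iff its image under the $\bb{Z}[\tfrac12]$-coordinate projection (pairing with the Perron eigenvector) is strictly positive, together with $0$ itself — this is the standard fact that for primitive stationary Bratteli diagrams the dimension group's order is determined by the Perron eigenvalue, so the free $\bb{Z}$-summand carries the trivial (no constraint) order once the positive component is nonzero. Concretely this yields the cone $\{(s,n): s>0\}\cup\{(0,0)\}$ as claimed in \eqref{eq:dyadic-oplus-z}. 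For the scale: the class of the unit is the image of $(1,1,1,1)^{\mathrm{t}} \in \bb{Z}^4$ in the limit, and pairing with the (normalised) Perron eigenvector and tracking through the block decomposition should give $(1,0)$ — I would verify that the $\bb{Z}$-component of the unit is $0$ by checking that $(1,1,1,1)$ lies in the $\bb{Z}[\tfrac12]$-span of the Perron direction plus the eventual-image of $N$ with vanishing residue, which is a short explicit check. Finally I would record that $\mathrm{K}_1 \cong \bb{Z}$ is not needed for the stated proposition but follows from the same Pimsner--Voiculescu sequence, and conclude. The one genuine subtlety to flag in the write-up is the normalisation convention — whether \cite{DurHosSka99} uses $M$ or $M^{\mathrm{t}}$ and acts on the left or right — but since $M$ and $M^{\mathrm{t}}$ have the same eigenvalues and the same Perron data, the resulting scaled ordered group is unchanged, so this does not affect the conclusion.
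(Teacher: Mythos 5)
Your overall strategy --- invoke the machinery of \cite[\S 6.4]{DurHosSka99} to express the dimension group as a stationary inductive limit built from the incidence matrix $M$, then analyse that limit and fix the order via Perron--Frobenius --- is essentially the route the paper takes. But the execution rests on a false arithmetic claim: $M$ is singular. The rows of the matrix in \eqref{eq:matrix-of-rho} satisfy $r_1-r_2-r_3+r_4=0$, so $\det M=0$ (the eigenvalues are $2,1,0,0$: indeed $\mathrm{tr}(M)=3$, $\mathrm{tr}(M^2)=5$ and $\mathrm{rank}(M^2)=2$), not $\det M=-2$ as you assert. Hence $M$ is not invertible over $\bb{Z}[\tfrac12]$, and the proposed block decomposition of $M^{\mathrm{t}}$ into a $1\times1$ block $(2)$ plus a complementary block $N$ with $\det N=\pm1$ cannot exist. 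This is not a cosmetic slip, because the entire content of the proposition is the precise structure of the limit: with the correct spectrum, the Perron eigenvalue $2$ contributes $\bb{Z}[\tfrac12]$, the one-dimensional eigenvalue-$1$ part contributes the single copy of $\bb{Z}$, and the two kernel directions are annihilated in the limit --- exactly the computation you defer (``the main computational obstacle: getting the rank of the free summand exactly equal to $1$''). As written, the proposal establishes neither the group $\bb{Z}[\tfrac12]\oplus\bb{Z}$ nor the image of the unit, which comes out as $(4,0)$ (the class of $(1,1,1,1)$) and still has to be renormalised to $(1,0)$ via $2$-divisibility. The paper avoids all of this by computing $M^{n+2}$ in closed form and reading off the limit group, the kernels, the positive cones and the unit explicitly.

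Two further gaps worth flagging. First, for a primitive aperiodic substitution the dimension group of the subshift is \emph{not} in general the stationary limit of the substitution matrix; some properness hypothesis is needed (Thue--Morse is the standard counterexample), and the paper therefore verifies that $\varrho$ is left-proper ($\varrho^2(a)$ begins with $\mathsf{3}$ for every letter $a$) before applying \cite[Corollary~33]{DurHosSka99} --- your write-up omits this hypothesis entirely. Second, dismissing the $M$ versus $M^{\mathrm{t}}$ issue on the grounds that they ``have the same eigenvalues and the same Perron data'' is not a valid general principle: the limit is an invariant of the integral matrix, not of its spectrum, so this point needs the correct citation convention rather than an eigenvalue argument. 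Your Perron--Frobenius description of the positive cone (positivity detected by the unique state, plus $0$) would be fine once the group, the unit and simplicity are in place, but it cannot substitute for the missing computation.
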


\begin{proof}
Note first that the substitution $\varrho$ is \emph{left-proper} (cf.\ \cite[Definition~16]{DurHosSka99}), in the sense that there is $p\in\mathbb{N}$ so that the words $\varrho^p(a)$ start with the same letter for all $a\in\mathsf{A}$ -- in our case one sees that for $p=2$ all words $\varrho^2(\mathsf{0})=\mathsf{3020},\varrho^2(\mathsf{1})=\mathsf{3021},\varrho^2(\mathsf{2})=\mathsf{3120}$, and $\varrho^2(\mathsf{3})=\mathsf{3121}$ start with the letter $\mathsf{3}$. We can thus apply \cite[Corollary~33]{DurHosSka99} (together with \cite[\S 2.4.1, Remark~(ii)]{DurHosSka99}) and so $\mathrm{K}_0(C(X_\varrho)\rtimes_\varphi\bb{Z})$ is computed as an inductive limit: with $M$ the matrix of \eqref{eq:matrix-of-rho}, for $n\ge 1$ consider the subgroups of $\bb{Q}^4$ given by
\begin{equation}
G_n \coloneqq \big\{\vec{q}\in \bb{Q}^4: M^n\cdot \vec{q} \in \bb{Z}^4\big\},
\end{equation}
\begin{equation}
H_n\coloneqq \big\{\vec{q}\in\bb{Q}^4: M^n\cdot\vec{q} = \vec{0}\big\},
\end{equation}
and the subsets
\begin{equation}
(G_{n})_+\coloneqq\big\{\vec{q}\in \bb{Q}^4: M^n\cdot\vec{q}\in\bb{Z}_{\ge0}^4\big\}.
\end{equation}
Note that $G_n\subset G_{n+1}$, $H_n\subset G_n$, $H_n \subset H_{n+1}$ and  that $(G_{n})_+\subset (G_{n+1})_+$ for all $n\ge1$. Set $G\coloneqq \bigcup_{n\ge1}G_n$, $H\coloneqq \bigcup_{n\ge1}H_n \subset G$ and $G_+\coloneqq \bigcup_{n\ge1}(G_n)_+$. Then, by \cite[\S 6.4 and Corollary~33]{DurHosSka99} we have that $\mathrm{K}_0(C(X_\varrho)\rtimes_\varphi\bb{Z})$ is isomorphic as an abelian group to $G/H$, its positive cone is identified with the image of $G_+$ under the quotient map, and the order unit is identified with the equivalence class of $(1,1,1,1)$ in $G/H$. By induction one checks that
\begin{equation}
M^{n+2}=\begin{pmatrix} 2^n+1 & 2^n-1 & 2^n & 2^n \\ 2^n & 2^n & 2^n & 2^n \\ 2^n & 2^n & 2^n & 2^n \\ 2^n-1 & 2^n+1 & 2^n &2^n\end{pmatrix}, \quad n\in\bb{N},
\end{equation}
and so with the notation $\vec{q}=(q_1,\dots,q_4)\in\bb{Q}^4$ we obtain
\begin{equation}
G_{n+2}= \big\{\vec{q}\in\bb{Q}^4: 2^n(q_1+\dots+q_4) \in \bb{Z},\; q_1-q_2\in\bb{Z}\big\},
\end{equation}
\begin{equation}
H_{n+2} = \big\{\vec{q}\in\bb{Q}^4: q_1+\dots+q_4=0,\; q_1=q_2\big\},
\end{equation}
and
\begin{equation}
(G_{n+2})_+ = \big\{ \vec{q}\in\bb{Q}^4: 2^n(q_1+\dots+q_4)\in\bb{N},\;|q_1-q_2|\le2^n(q_1+\dots+q_4)\big\}.
\end{equation}
We thus define group homomorphisms $\alpha_n\colon G_{n+2}\to \tfrac{1}{2^n}\bb{Z}\oplus\bb{Z}$ given by $\alpha_n(\vec{q})=(q_1+\dots+q_4,q_1-q_2)$. It is clear that each $\alpha_n$ is surjective, that $\ker(\alpha_n)=H_{n+2}$ and that 
\[
\alpha_n((G_{n+2})_+) = \{(s,m)\in\tfrac{1}{2^n}\bb{Z}\oplus \bb{Z}: s\ge 0, |m|\le 2^ns\}.
\]
Denoting by $\tilde{\alpha}_n\colon G_{n+2}/H_{n+2}\to\tfrac{1}{2^n}\bb{Z}\oplus\bb{Z}$ the induced isomorphisms, we see that we have a commutative diagram
\begin{equation}
\begin{tikzcd}
G_2/H_2\ar[r, hook]\ar[d, "\tilde{\alpha}_0", "\cong" left] & G_3/H_3\ar[d, "\tilde{\alpha}_1", "\cong" left] \ar[r,hook] & G_4/H_4\ar[d, "\tilde{\alpha}_2", "\cong" left] \ar[r,hook] & \dots \\
\tfrac{1}{2^0}\bb{Z}\oplus \bb{Z} \ar[r,hook]&\tfrac{1}{2^1}\bb{Z}\oplus\bb{Z} \ar[r,hook] & \tfrac{1}{2^2}\bb{Z}\oplus\bb{Z}\ar[r, hook] &\dots
\end{tikzcd}
\end{equation}
where the arrows on each row denote the inclusion maps. The abelian group that is the inductive limit of the bottom row is $\bb{Z}[\tfrac{1}{2}]\oplus\bb{Z}$, so we obtain an isomorphism $G/H\cong\bb{Z}[\tfrac{1}{2}]\oplus\bb{Z}$. Moreover, the positive cone of $G/H$ (being the image of $G_+$ under the quotient map) is carried by this isomorphism to $\bigcup_{n\in\bb{N}}\tilde{\alpha}_n((G_{n+2})_+)$ which is precisely the set 
\begin{equation}\label{eq:positive-cone}
\bigcup_{n\in\bb{N}}\{(s,m)\in\tfrac{1}{2^n}\bb{Z}\oplus \bb{Z}: s\ge0, |m|\le 2^n s\}  ,
\end{equation}
which is in fact equal to $\{(s,m)\in\bb{Z}[\tfrac{1}{2}]\oplus\bb{Z}: s>0\}\cup\{(0,0)\}$: the latter contains the set appearing in \eqref{eq:positive-cone} which obviously contains $(0,0)$. On the other hand, if $(s,m)\in\bb{Z}[\tfrac{1}{2}]\oplus\bb{Z}$ is such that $s>0$, then we can write $s=\frac{k}{2^n}$ for some $k,n\in\bb{N}$. Now take $n'\in\bb{N}$ large enough so that $|m|\le 2^{n'}k $ and note that $s =\frac{2^{n'}k}{2^{n+n'}} \in \tfrac{1}{2^{n+n'}}\bb{Z}$, and $2^{n+n'}s=2^{n'}k$.

Finally, the isomorphism $G/H\cong\bb{Z}[\tfrac{1}{2}]\oplus\bb{Z}$ maps the equivalence class of $(1,1,1,1)$ to the element $(4,0)$. This shows that, as a scaled ordered group, $\mathrm{K}_0(C(X_\varrho)\rtimes_\varphi\bb{Z})$ is isomorphic to
\[
\big(\bb{Z}[\tfrac{1}{2}]\oplus\bb{Z}, \{(s,n): s>0,n\in\bb{Z}\}\cup\{(0,0)\},(4,0)\big)
\]
which is in turn isomorphic to the scaled ordered group in \eqref{eq:dyadic-oplus-z}, since $\bb{Z}[\tfrac{1}{2}]$ is $2$-divisible.
\end{proof}

In order to make use of our auxiliary subshift we need a transformation from the binary shift space. Consider the map $\beta\colon\{\mathsf{0},\mathsf{1}\}^\bb{Z}\to\mathsf{A}^\bb{Z}$ (where still $\mathsf{A}=\{\mathsf{0,1,2,3}\}$) defined as
\begin{equation}\label{eq:beta}
\beta\big( (x_i)_{i\in\bb{Z}} \big) = (\mathsf{2}\cdot x_{2i} + x_{2i+1})_{i\in\bb{Z}}, \quad (x_i)_{i\in\bb{Z}}\in\{\mathsf{0},\mathsf{1}\}^\bb{Z}
\end{equation}
and note that $\beta$ is injective. While $\beta$ is not equivariant with respect to the shift maps, one directly checks that
\begin{equation}\label{eq:beta-shiftsquare}
\beta\circ\varphi_{\{\mathsf{0},\mathsf{1}\}}^2 = \varphi_{\mathsf{A}}\circ\beta.
\end{equation}

\begin{lemma}\label{lem:t-to-r}
Let $\beta$ be the map defined in \eqref{eq:beta}. Then, there is $\tilde{x}\in \underline{X}$ such that $\tilde{x}_{[0,\infty)}=\underline{t}$ and $\beta(\tilde{x})\in X_\varrho$, with $\beta(\tilde{x})_{[0,\infty)}=\underline{r}$, where still $\underline{t}$ is the paper-folding sequence from \Cref{sec:pf} and $\underline{r}$ comes from \Cref{rmk:r}. Moreover, with 
\begin{equation}
\underline{X}^{(0)}\coloneqq \overline{\{\varphi_{\{\mathsf{0,1}\}}^{2k}(\tilde{x}): k\in\bb{Z}\}}
\end{equation}
and
\begin{equation}
\underline{X}^{(1)}\coloneqq \overline{\{\varphi_{\{\mathsf{0,1}\}}^{2k+1}(\tilde{x}): k\in\bb{Z}\}},
\end{equation}

\begin{enumerate}[label=\normalfont{(\roman*)}]
\item\label{it1decomp} $\underline{X}^{(0)}$ and $\underline{X}^{(1)}$ are both mapped onto themselves by $\varphi_{\{\mathsf{0,1}\}}^2$, and they are mapped onto each other by $\varphi_{\{\mathsf{0,1}\}}$,
\item\label{it2decomp} $\underline{X}^{(0)}\sqcup \underline{X}^{(1)}=\underline{X}$ (and so $\underline{X}^{(0)}$ and $\underline{X}^{(1)}$ are clopen sets),
\item\label{it3decomp} $\beta(\underline{X}^{(0)}) = X_\varrho$.
\end{enumerate}

\end{lemma}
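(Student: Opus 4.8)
The plan is to produce the bi-infinite word $\tilde x$, establish a single combinatorial rigidity property of $\underline t$, and then read off \ref{it1decomp}--\ref{it3decomp} from soft minimality arguments together with the partial conjugacy \eqref{eq:beta-shiftsquare}. First I would produce $\tilde x$: by \Cref{prop:pf}\ref{it2pf} the paper-folding sequence $\underline t$ is non-periodic and uniformly recurrent, so \Cref{lem:bi-infinite-t} (applied with $\underline s=\underline t$) yields $\tilde x\in\underline X$ with $\tilde x_{[0,\infty)}=\underline t$ and $\mathcal L(\tilde x)=\mathcal L(\underline t)$. That $\beta(\tilde x)_{[0,\infty)}=\underline r$ is then immediate from \eqref{eq:beta} and \Cref{rmk:r}, since for $j\ge0$ one has $\beta(\tilde x)_j=\mathsf 2\,\tilde x_{2j}+\tilde x_{2j+1}=\mathsf 2\,\underline t_{2j}+\underline t_{2j+1}=\underline r_j$.

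\emph{The rigidity property (the step I expect to be the main obstacle).} The crux is to show that \emph{consecutive even-indexed letters of $\tilde x$ are always distinct}, i.e.\ $\tilde x_{2k}\ne\tilde x_{2k+2}$ for all $k\in\bb Z$. I would derive this from two self-similarity identities obtained by instantiating \eqref{eq:pf-self-similarity} at $p=0$ (so $t_0=\mathsf 1$, $\hat t_0=\mathsf 0$, whence $t_{n+1}=\mathsf 1\,x_1\,\mathsf 0\,x_2\,\mathsf 1\,x_3\,\mathsf 0\cdots$ with $t_n=x_1\cdots x_{2^{n+1}-1}$): namely (a) $\underline t_{2n}=\mathsf 1$ iff $n$ is even, and (b) $\underline t_{2n+1}=\underline t_n$. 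From (a), consecutive even positions of $\underline t$ carry distinct letters, hence $\underline t$ contains no alternating subword $\mathsf{1010}$ or $\mathsf{0101}$ (a length-$4$ window always contains two consecutive even positions, at which an alternating word repeats a letter), and by (b) the odd-indexed subsequence of $\underline t$ equals $\underline t$, so it too has no such subword. Now for $k<0$ consider $W:=\tilde x_{[2k,6]}\in\mathcal L(\tilde x)=\mathcal L(\underline t)$, say $W=\underline t_{[l,\,l+6-2k]}$; the four letters of $W$ sitting at the non-negative even positions are $\underline t_0\underline t_2\underline t_4\underline t_6=\mathsf{1010}$, and if $l$ were odd these would, via (b), produce an alternating length-$4$ subword of $\underline t$, which is impossible. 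Hence $l$ is even, so $\tilde x_{2k}=\underline t_l$ and $\tilde x_{2k+2}=\underline t_{l+2}$ are consecutive even-position letters of $\underline t$ and differ by (a); for $k\ge0$ the claim is (a) for $\underline t$ itself.

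\emph{Parts \ref{it1decomp} and \ref{it2decomp}.} Part \ref{it1decomp} is formal: $\varphi^2$ permutes $\{\varphi^{2k}(\tilde x)\}_k$ and $\{\varphi^{2k+1}(\tilde x)\}_k$ and $\varphi$ interchanges these two sets, so, being homeomorphisms of the compact space $\underline X$, they act the same way on the closures $\underline X^{(0)},\underline X^{(1)}$. For \ref{it2decomp}, $\underline X^{(0)}\cup\underline X^{(1)}\supseteq\orb_\varphi(\tilde x)$, which is dense since $(\underline X,\varphi)$ is minimal (\Cref{lem:bi-infinite-t}), and the left side is closed, so $\underline X^{(0)}\cup\underline X^{(1)}=\underline X$. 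For disjointness I would use the dichotomy for $(\underline X,\varphi^2)$: taking any $\varphi^2$-minimal $M\subseteq\underline X$, the $\varphi$-invariant closed set $M\cup\varphi(M)$ equals $\underline X$, so either $M=\varphi(M)=\underline X$ (i.e.\ $(\underline X,\varphi^2)$ is minimal and $\underline X^{(0)}=\underline X^{(1)}=\underline X$), or $\underline X=M\sqcup\varphi(M)$ with $\varphi$ swapping two $\varphi^2$-minimal halves; in the latter case $\tilde x$ lies in one of them, so $\underline X^{(0)}=M$ and $\underline X^{(1)}=\varphi(M)$ are disjoint, and, being complementary closed sets, clopen and $\varphi^2$-minimal. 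The first alternative is excluded by the rigidity property: it would give $\varphi(\tilde x)=\lim_j\varphi^{2k_j}(\tilde x)$, hence $\tilde x_{[2k_j,2k_j+3]}=\varphi(\tilde x)_{[0,3]}=\underline t_{[1,4]}=\mathsf{1011}$ for large $j$, so $\tilde x_{2k_j}=\tilde x_{2k_j+2}=\mathsf 1$ — a contradiction. Thus $\underline X=\underline X^{(0)}\sqcup\underline X^{(1)}$, with $(\underline X^{(0)},\varphi^2)$ minimal.

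\emph{Part \ref{it3decomp}.} As a continuous injection on the compact space $\underline X$, $\beta$ is a homeomorphism onto $\beta(\underline X)$, and by \eqref{eq:beta-shiftsquare} it conjugates $(\underline X^{(0)},\varphi^2)$ to $(\beta(\underline X^{(0)}),\varphi_{\mathsf A})$; in particular $\beta(\underline X^{(0)})=\overline{\orb_{\varphi_{\mathsf A}}(\beta(\tilde x))}$ is a minimal subshift over $\mathsf A$. Its $\omega$-limit set under $\varphi_{\mathsf A}$ is non-empty and, because any window of $\beta(\tilde x)$ far enough to the right is a window of $\beta(\tilde x)_{[0,\infty)}=\underline r$, is contained in $\{z\in\mathsf A^{\bb Z}:\mathcal L(z)\subseteq\mathcal L(\underline r)=\mathcal L_\varrho\}=X_\varrho$. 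Hence the minimal subshift $\beta(\underline X^{(0)})$ meets the minimal subshift $X_\varrho$, so the two coincide, $\beta(\underline X^{(0)})=X_\varrho$; in particular $\beta(\tilde x)\in X_\varrho$, which completes the first assertion of the lemma as well.
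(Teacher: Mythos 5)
Your argument is correct, and it reaches the same conclusions as the paper by a genuinely different route, although both ultimately rest on the self-similarity identity \eqref{eq:pf-self-similarity} and the intertwining relation \eqref{eq:beta-shiftsquare}. The paper builds $\tilde{x}$ the other way around: it first extends $\underline{r}$ to a point $y\in X_\varrho$ via \Cref{lem:bi-infinite-t} and then decodes $y$ into two-letter blocks to define the negative tail of $\tilde{x}$, so that $\beta(\tilde{x})=y\in X_\varrho$ holds by construction; the work there is to check $\tilde{x}\in\underline{X}$ (every even-aligned window of $\tilde{x}$ occurs even-aligned in $\underline{t}$), and then part (iii) is immediate since $\beta(\underline{X}^{(0)})$ is the closure of the $\varphi_{\mathsf{A}}$-orbit of a point of the minimal subshift $X_\varrho$. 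You instead extend $\underline{t}$ directly, which makes $\tilde{x}\in\underline{X}$ free but postpones $\beta(\tilde{x})\in X_\varrho$ to the end, where you recover it via the $\omega$-limit set and the fact that two intersecting minimal subshifts coincide -- this needs the $\varphi^2$-minimality of $\underline{X}^{(0)}$, which your dichotomy argument supplies. For disjointness in part (ii) the paper instantiates \eqref{eq:pf-self-similarity} at $p=1$ to show that the seven-letter windows of $\underline{t}$ starting at even and at odd positions are never equal, giving the uniform bound $d(\varphi^{2k}(\tilde{x}),\varphi^{2\ell+1}(\tilde{x}))\ge 2^{-7}$ after reducing to forward orbits; you instantiate it at $p=0$ to get the letter identities ($\underline{t}_{2n}=\mathsf{1}$ iff $n$ is even, and $\underline{t}_{2n+1}=\underline{t}_n$), rule out alternating length-four blocks, phase-lock the left tail of $\tilde{x}$ (your rigidity property $\tilde{x}_{2k}\ne\tilde{x}_{2k+2}$ for all $k\in\bb{Z}$), and then run the standard minimality-versus-splitting dichotomy for $\varphi^2$ over a minimal $\varphi$-system. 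Both routes are complete: the paper's is more quantitative and keeps all combinatorics in the non-negative coordinates, while yours yields the $\varphi^2$-minimality of $\underline{X}^{(0)}$ as a by-product and handles the negative coordinates combinatorially instead of anchoring them to $X_\varrho$ from the start.
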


\begin{proof}
For the first part of the statement, from \Cref{lem:bi-infinite-t} we obtain $y\in X_\varrho \subset \mathsf{A}^\bb{Z}$ such that $y_{[0,\infty)}=\underline{r}$. Consider $\tilde{x}\in\{\mathsf{0},\mathsf{1}\}^\bb{Z}$ defined as $\tilde{x}_{[0,\infty)}=\underline{t}$, and, for $j\le -1$, we choose $\tilde{x}_{2j},\tilde{x}_{2j+1}\in\{\mathsf{0},\mathsf{1}\}$ in the unique way so that $\mathsf{2}\tilde{x}_{2j}+\tilde{x}_{2j+1}=y_j$. By definition of $\beta$ together with \Cref{rmk:r}, $\beta(\tilde{x})=y$, so what we need to show is that $\tilde{x}\in \underline{X}$. Indeed, let $k,\ell\in\bb{Z}$ with $k\le \ell$ and consider the subword $\tilde{x}_{2k}\tilde{x}_{2k+1}\dots\tilde{x}_{2\ell}\tilde{x}_{2\ell+1}$ of $\tilde{x}$. Since $\beta(\tilde{x})=y$ and $\mathcal{L}(y)=\mathcal{L}(\underline{r})$, we have that $w\coloneqq(\mathsf{2}\tilde{x}_{2k}+\tilde{x}_{2k+1})\dots(\mathsf{2}\tilde{x}_{2\ell}+\tilde{x}_{2\ell+1})$ is a subword of $\underline{r}=(\mathsf{2}\tilde{x}_0+\tilde{x}_1)(\mathsf{2}\tilde{x}_2+\tilde{x}_3)\dots$, and so there exists $m\ge0$ such that 
\[
w = (\mathsf{2}\tilde{x}_{2m}+\tilde{x}_{2m+1})\dots(\mathsf{2}\tilde{x}_{2m+2\ell-2k} +\tilde{x}_{2m+2\ell-2k+1}).
\]
Since $\mathsf{2}a+b=\mathsf{2}c+d$ implies that $a=b$ and $c=d$ for all $a,b,c,d\in\{\mathsf{0},\mathsf{1}\}$, it follows that
\[
\tilde{x}_{2k}\tilde{x}_{2k+1}\dots\tilde{x}_{2\ell}\tilde{x}_{2\ell+1} = \tilde{x}_{2m}\tilde{x}_{2m+1}\dots\tilde{x}_{2m+2\ell-2k}\tilde{x}_{2m+2\ell-2k+1},
\]
which is a subword of $\underline{t}=\tilde{x}_{[0,\infty)}$ as we wanted.

For the second part, \ref{it1decomp} is obvious. For \ref{it2decomp}, it is enough to show that $\underline{X}^{(0)} $ and $\underline{X}^{(1)}$ are disjoint since by minimality their union is equal to $\underline{X}$. To that end, observe first that $\underline{X}^{(0)} = \overline{\{\varphi_{\{\mathsf{0,1}\}}^{2k}(\tilde{x}): k\in\bb{N}\}}$ and that $\underline{X}^{(1)} = \overline{\{\varphi_{\{\mathsf{0,1}\}}^{2\ell+1}(\tilde{x}): \ell\in\bb{N}\}}$ (cf.\ \cite[Lemma~5.2]{Put18}). Now since $\tilde{x}_{[0,\infty)}=\underline{t}$, it is enough to show that for any $k,\ell\in\bb{N}$ we have that $\underline{t}_{[2k, 2k+6]} \ne \underline{t}_{[2\ell+1, 2\ell+7]}$, since this implies that
\begin{equation}
d(\varphi_{\{\mathsf{0,1}\}}^{2k}(\tilde{x}), \varphi_{\{\mathsf{0,1}\}}^{2\ell+1}(\tilde{x}))\ge 2^{-7}, \quad k,\ell\in\bb{N},
\end{equation}
where $d$ is the metric defined in \eqref{eq:metric}. Let $k,\ell\ge0$ and let $n\in\bb{N}$ be such that $\max\{2k+6, 2\ell+7\}<2^{n+3}-1$. With $t_\mathrm{e}\coloneqq \underline{t}_{[2k,2k+6]}$ and $t_\mathrm{o}\coloneqq \underline{t}_{[2\ell+1,2\ell+7]}$, we have that $t_\mathrm{e}, t_\mathrm{o}\prec t_{n+2}$. Applying \eqref{eq:pf-self-similarity} with $p=1$ and this $n$, we see that
\begin{equation}\label{eq:lol}
t_{n+2} = \mathsf{110} x_1 \mathsf{100} x_2 \mathsf{110} x_3 \mathsf{100} x_4 \mathsf{110}x_5 \dots \mathsf{110} x_{2^{n+1}-1}\mathsf{100}
\end{equation}
for some $x_i\in\{\mathsf{0,1}\}$. Inspecting \eqref{eq:lol}, we see that
\begin{equation}
t_\mathrm{e} \in \big\{ \mathsf{110}x\mathsf{100}, \mathsf{0}x\mathsf{100}y\mathsf{1}, \mathsf{100}x\mathsf{110}, \mathsf{0}x\mathsf{110}y\mathsf{1} \big\}
\end{equation}
for some $x,y\in\{\mathsf{0,1}\}$, while
\begin{equation}
t_\mathrm{o}\in \big\{ \mathsf{10}x\mathsf{100}y, x\mathsf{100}y\mathsf{11}, \mathsf{00}x\mathsf{110}y, x\mathsf{110}y\mathsf{10}\big\}
\end{equation}
for some $x,y\in\{\mathsf{0,1}\}$, and from this description it is clear that $t_{\mathrm{e}}\ne t_{\mathrm{o}}$.

For \ref{it3decomp}, notice that by \eqref{eq:beta-shiftsquare} we have that $\{\varphi_{\{\mathsf{0,1}\}}^{2k}(\tilde{x}):k\in\bb{Z}\}$ is mapped by $\beta$ onto the orbit of $\beta(\tilde{x})$ under the shift map on $\mathsf{A}^\bb{Z}$. Since $\beta(\tilde{x})$ belongs to $X_\varrho$ which is a minimal subshift, $\beta(\underline{X}^{(0)})=X_\varrho$.
\end{proof}

\begin{proposition}\label{thm:k_0-of-pf}
As a scaled ordered group, $\mathrm{K}_0(C(\underline{X})\rtimes_{\varphi_{\{\mathsf{0,1}\}}}\bb{Z})$ is isomorphic to
\begin{equation*}
\big(\bb{Z}[\tfrac{1}{2}]\oplus\bb{Z}, \{(s,n): s>0,n\in\bb{Z}\}\cup\{(0,0)\},(1,0)\big).
\end{equation*}
\end{proposition}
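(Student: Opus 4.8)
\emph{Proof strategy.} The plan is to reduce the computation to \Cref{prop:K-theory-rho} via the clopen decomposition of \Cref{lem:t-to-r}. The key structural point is that $\underline{X}^{(0)}$ is a clopen subset of $\underline{X}$ meeting every $\varphi_{\{\mathsf{0,1}\}}$-orbit, on which the first-return-time function of $\varphi_{\{\mathsf{0,1}\}}$ takes the constant value $2$: indeed, by \Cref{lem:t-to-r}\ref{it1decomp} and \ref{it2decomp} we have $\varphi_{\{\mathsf{0,1}\}}(\underline{X}^{(0)})=\underline{X}^{(1)}$ with $\underline{X}=\underline{X}^{(0)}\sqcup\underline{X}^{(1)}$, and $\varphi_{\{\mathsf{0,1}\}}^{2}(\underline{X}^{(0)})=\underline{X}^{(0)}$, so each point of $\underline{X}^{(0)}$ first returns to $\underline{X}^{(0)}$ after exactly two iterations of $\varphi_{\{\mathsf{0,1}\}}$. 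It is a standard feature of induced transformations on clopen sets (cf.\ \cite{Put18}) that such a configuration yields a $\ast$-isomorphism
\[
C(\underline{X})\rtimes_{\varphi_{\{\mathsf{0,1}\}}}\bb{Z}\;\cong\;M_2\big(C(\underline{X}^{(0)})\rtimes_{\varphi_{\{\mathsf{0,1}\}}^{2}}\bb{Z}\big)
\]
carrying the projection $\mathbf{1}_{\underline{X}^{(0)}}\in C(\underline{X})$ to the matrix unit $e_{11}$; equivalently, $\mathbf{1}_{\underline{X}^{(0)}}$ is a full projection in $C(\underline{X})\rtimes_{\varphi_{\{\mathsf{0,1}\}}}\bb{Z}$ whose corner is $C(\underline{X}^{(0)})\rtimes_{\varphi_{\{\mathsf{0,1}\}}^{2}}\bb{Z}$, while $\mathbf{1}_{\underline{X}^{(1)}}=u\,\mathbf{1}_{\underline{X}^{(0)}}u^{*}$ for the canonical unitary $u\in C(\underline{X})\rtimes_{\varphi_{\{\mathsf{0,1}\}}}\bb{Z}$, so that $\mathbf{1}_{\underline{X}^{(0)}}$ and $\mathbf{1}_{\underline{X}^{(1)}}$ are Murray--von Neumann equivalent. (This can be set up directly via the partial isometry $\mathbf{1}_{\underline{X}^{(0)}}u$ and the unitary $u^{2}$, which restricts inside the corner to the canonical unitary implementing $\varphi_{\{\mathsf{0,1}\}}^{2}$.)

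I would then identify the first-return system with the substitution system $(X_\varrho,\varphi_{\mathsf{A}})$. By \Cref{lem:t-to-r}\ref{it3decomp} the map $\beta$ of \eqref{eq:beta} restricts to a continuous bijection of the compact space $\underline{X}^{(0)}$ onto $X_\varrho$, hence to a homeomorphism, and by \eqref{eq:beta-shiftsquare} it intertwines the restriction of $\varphi_{\{\mathsf{0,1}\}}^{2}$ to $\underline{X}^{(0)}$ with $\varphi_{\mathsf{A}}$ on $X_\varrho$. Thus these two Cantor minimal systems are conjugate, so that
\[
C(\underline{X}^{(0)})\rtimes_{\varphi_{\{\mathsf{0,1}\}}^{2}}\bb{Z}\;\cong\;C(X_\varrho)\rtimes_{\varphi_{\mathsf{A}}}\bb{Z};
\]
write $B$ for the latter crossed product. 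By \Cref{prop:K-theory-rho}, the scaled ordered group $\mathrm{K}_0(B)$ is isomorphic to $\big(\bb{Z}[\tfrac{1}{2}]\oplus\bb{Z}, \{(s,n): s>0,n\in\bb{Z}\}\cup\{(0,0)\},(1,0)\big)$, with the class $[1_B]_0$ of the unit corresponding to $(1,0)$.

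It remains to transport this along the $\ast$-isomorphisms above while keeping track of the order unit. Combining the two displays, $C(\underline{X})\rtimes_{\varphi_{\{\mathsf{0,1}\}}}\bb{Z}\cong M_2(B)$, so matrix stabilisation yields an isomorphism of \emph{ordered} abelian groups $\mathrm{K}_0(C(\underline{X})\rtimes_{\varphi_{\{\mathsf{0,1}\}}}\bb{Z})\cong\mathrm{K}_0(M_2(B))\cong\mathrm{K}_0(B)$, under which $[\mathbf{1}_{\underline{X}^{(0)}}]_0=[e_{11}]_0$ is sent to $[1_B]_0$, hence to $(1,0)$. The order unit of $C(\underline{X})\rtimes_{\varphi_{\{\mathsf{0,1}\}}}\bb{Z}$, on the other hand, is $[\mathbf{1}_{\underline{X}}]_0=[\mathbf{1}_{\underline{X}^{(0)}}]_0+[\mathbf{1}_{\underline{X}^{(1)}}]_0=2\,[\mathbf{1}_{\underline{X}^{(0)}}]_0$ (using $\mathbf{1}_{\underline{X}^{(1)}}=u\mathbf{1}_{\underline{X}^{(0)}}u^{*}$), and is therefore sent to $(2,0)$. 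Finally, since $\bb{Z}[\tfrac{1}{2}]$ is $2$-divisible, the assignment $(s,n)\mapsto(\tfrac{1}{2}s,n)$ is an automorphism of the ordered group $\big(\bb{Z}[\tfrac{1}{2}]\oplus\bb{Z}, \{(s,n): s>0,n\in\bb{Z}\}\cup\{(0,0)\}\big)$ that sends $(2,0)$ to $(1,0)$; composing with it produces the desired isomorphism of scaled ordered groups.

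The two $\ast$-isomorphisms are routine once \Cref{lem:t-to-r} is available; the only point that needs a little care is the order-unit bookkeeping. The reduction to $X_\varrho$ is only a Morita-type equivalence (here an $M_2$-amplification, equivalently a full corner), which doubles the class of the unit, so one genuinely has to absorb a factor of $2$ at the end by invoking $2$-divisibility of $\bb{Z}[\tfrac{1}{2}]$ -- exactly as in the last line of the proof of \Cref{prop:K-theory-rho}.
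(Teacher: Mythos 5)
Your proposal is correct and follows essentially the same route as the paper: the same decomposition $\underline{X}=\underline{X}^{(0)}\sqcup\underline{X}^{(1)}$ from \Cref{lem:t-to-r}, the identification $C(\underline{X})\rtimes_{\varphi}\bb{Z}\cong \mathrm{M}_2\big(C(X_\varrho)\rtimes_{\varphi_{\mathsf{A}}}\bb{Z}\big)$ via the corner at $\mathbf{1}_{\underline{X}^{(0)}}$ and the conjugacy implemented by $\beta$, and the same order-unit bookkeeping absorbing the factor $2$ by $2$-divisibility of $\bb{Z}[\tfrac{1}{2}]$. The only cosmetic difference is that you invoke the standard induced-transformation/full-corner fact for the return-time-two set, whereas the paper verifies the corner isomorphism directly by exhibiting the covariant pair $\big(f\mapsto f\circ\beta\vert_{\underline{X}^{(0)}},\, pu_A^2p\big)$ and using simplicity for injectivity.
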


\begin{proof}
Set $A\coloneqq C(\underline{X})\rtimes_{\varphi_{\{\mathsf{0,1}\}}}\bb{Z}$, $B\coloneqq C(X_\varrho)\rtimes_{\varphi_\mathsf{A}}\bb{Z}$, and let $u_A\in A$, $u_B\in B$ be the unitaries that are implementing the respective $\bb{Z}$-actions, in the sense that $u_Afu_A^* = f\circ \varphi_{\{\mathsf{0,1}\}}^{-1}$ for all $f\in C(\underline{X})$ and $u_Bfu_B^* = f\circ\varphi_{\mathsf{A}}^{-1}$ for all $f\in C(X_\varrho)$. We will show that $A\cong \mathrm{M}_2(B)$, from which it follows that $\mathrm{K}_0(A) \cong \mathrm{K}_0(B)$ as ordered abelian groups, with $[1_A]_0$ identified with $2[1_B]_0$, and so by \Cref{prop:K-theory-rho} and $2$-divisibility of $\bb{Z}[\tfrac{1}{2}]$ it follows that $(\mathrm{K}_0(A),\mathrm{K}_0(A)_+,[1_A]_0)$ is isomorphic to
\[
\big(\bb{Z}[\tfrac{1}{2}]\oplus\bb{Z}, \{(s,n): s>0,n\in\bb{Z}\}\cup\{(0,0)\},(1,0)\big).
\]

Indeed, let $p\in C(\underline{X})$ be the indicator function of $\underline{X}^{(0)}$, and set $v\coloneqq pu_A^*\in A$. Note that $vv^* = p$ and that $v^*v = 1_A-p$, since $\varphi_{\{\mathsf{0,1}\}}(\underline{X}^{(0)})=\underline{X}^{(1)}=\underline{X}\setminus \underline{X}^{(0)}$. Sending $v$ and $v^*$ to the matrix units $e_{12}$ and $e_{21}$ respectively, $A\cong \mathrm{M}_2(pAp)$, and so it suffices to show that $B \cong pAp$. Let $\vartheta\colon C(X_\varrho)\to C(\underline{X}^{(0)})\cong pC(\underline{X})p\subset pAp$ be the $^*$-homomorphism
\[
\vartheta(f) = f\circ\beta\vert_{\underline{X}^{(0)}},\quad f\in C(X_\varrho)
\]
and let $\tilde{u}\coloneqq pu_A^2p$. Then 
\begin{align*}
\tilde{u}\vartheta(f)\tilde{u}^* &= f\circ \beta\vert_{\underline{X}^{(0)}}\circ \varphi_{\{\mathsf{0,1}\}}^{-2}\\
&\stackrel{\mathmakebox[\widthof{=}]{\eqref{eq:beta-shiftsquare}}}{=}\; (f\circ\varphi_{\mathsf{A}}^{-1})\circ\beta\vert_{\underline{X}^{(0)}} \\
&= \vartheta (u_B fu_B^*),
\end{align*}
and so we obtain a $^*$-homomorphism $C(X_\varrho)\rtimes_{\varphi_\mathsf{A}}\bb{Z}\to pAp $ that is injective since the domain is simple \cite{ArcSpi94}. Moreover it is surjective since the range clearly contains $pC(\underline{X})p$, and $\tilde{u}=pu_A^2p$, while $pu_Ap=0$.
\end{proof}

At this point the auxiliary subshift $X_\varrho$ has done its duty and it can go; this also means we can return to the alphabet $\{\mathsf{0},\mathsf{1}\}$ and write just $\varphi$ instead of $\varphi_{\{\mathsf{0},\mathsf{1}\}}$.

\begin{remark}\label{rmk:sigmastar-induces}
Regarding the free involution $\sigma$ on $\underline{X}$, observe that the map $\sigma_*\colon C(\underline{X},\bb{Z})\to C(\underline{X},\bb{Z})$, given by $f\mapsto f\circ\sigma^{-1}$, satisfies $\sigma_*(\mathrm{im}(1-\varphi_*)) \subset \mathrm{im}(1-\varphi_*)$ since $\sigma$ anti-commutes with $\varphi$. Clearly $\sigma_*$ maps the set of non-negative valued functions onto itself and the constant function $1$ to itself, so it follows that $\sigma_*$ induces an isomorphism of scaled ordered groups, which (with a slight abuse of notation) we also denote by $\sigma_*\colon \frac{C(\underline{X},\bb{Z})}{\mathrm{im}(1-\varphi_*)}\to \frac{C(\underline{X},\bb{Z})}{\mathrm{im}(1-\varphi_*)}$, and which moreover has order $2$, since $\sigma^2=\mathrm{id}$.
\end{remark}

The following lemma implies that the action induced by $\sigma$ on $\mathrm{K}_0(C(\underline{X})\rtimes_\varphi\bb{Z})$ is nontrivial. This also settles a problem raised by Scarparo in \cite[Remark~2.3~(ii)]{Sca23}.

\begin{lemma}
The automorphism $\sigma_*\colon \frac{C(\underline{X},\bb{Z})}{\mathrm{im}(1-\varphi_*)}\to\frac{C(\underline{X},\bb{Z})}{\mathrm{im}(1-\varphi_*)}$ is not the identity.
\end{lemma}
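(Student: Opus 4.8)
I would prove this by exhibiting one element of $\tfrac{C(\underline{X},\bb{Z})}{\mathrm{im}(1-\varphi_*)}$ that $\sigma_*$ genuinely moves. For $\mathsf{a}\in\{\mathsf{0},\mathsf{1}\}$ let $C_{\mathsf{a}}\coloneqq\{x\in\underline{X}:x_0=\mathsf{a}\}$, a clopen set, and put $f\coloneqq\mathbf{1}_{C_{\mathsf{0}}}-\mathbf{1}_{C_{\mathsf{1}}}\in C(\underline{X},\bb{Z})$. Because $\sigma(x)_0=\mathsf{1}-x_0$, one has $f\circ\sigma^{-1}=f\circ\sigma=-f$, so $\sigma_*([f])=-[f]$; hence it suffices to show that $2[f]\neq0$. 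I will see this after identifying $\tfrac{C(\underline{X},\bb{Z})}{\mathrm{im}(1-\varphi_*)}$ with $\mathrm{K}_0(C(\underline{X})\rtimes_\varphi\bb{Z})$ in the canonical way.

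The heart of the argument is to compute $[f]$ by transporting it through the isomorphisms of this section. Set $A\coloneqq C(\underline{X})\rtimes_\varphi\bb{Z}$, $B\coloneqq C(X_\varrho)\rtimes_{\varphi_{\mathsf{A}}}\bb{Z}$ and $p\coloneqq\mathbf{1}_{\underline{X}^{(0)}}$, so that $A\cong\mathrm{M}_2(pAp)$ and $pAp\cong B$, as in the proof of \Cref{thm:k_0-of-pf}. Since $\mathrm{K}_0$-classes are shift-invariant and $\varphi(\underline{X}^{(0)})=\underline{X}^{(1)}$ (\Cref{lem:t-to-r}), any $g\in C(\underline{X},\bb{Z})$ satisfies $[g]=[gp]+[(g\circ\varphi)p]$ in $\mathrm{K}_0(A)\cong\mathrm{K}_0(pAp)$. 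As $\beta(x)_0=\mathsf{2}x_0+x_1$, on $\underline{X}^{(0)}$ the events $x_0=\mathsf{0}$, $x_0=\mathsf{1}$, $x_1=\mathsf{0}$, $x_1=\mathsf{1}$ are carried by $\beta$ to $\beta(x)_0\in\{\mathsf{0},\mathsf{1}\}$, $\{\mathsf{2},\mathsf{3}\}$, $\{\mathsf{0},\mathsf{2}\}$, $\{\mathsf{1},\mathsf{3}\}$ respectively; pushing these through $pAp\cong B$ — which sends $\mathbf{1}_{\beta^{-1}(V)\cap\underline{X}^{(0)}}$ to $\mathbf{1}_V$ for clopen $V\subseteq X_\varrho$, using $\beta(\underline{X}^{(0)})=X_\varrho$ (\Cref{lem:t-to-r}) — yields, writing $D_{\mathsf{b}}\coloneqq\{y\in X_\varrho:y_0=\mathsf{b}\}$,
\[
[\mathbf{1}_{C_{\mathsf{0}}}]=2[\mathbf{1}_{D_{\mathsf{0}}}]+[\mathbf{1}_{D_{\mathsf{1}}}]+[\mathbf{1}_{D_{\mathsf{2}}}],\qquad [\mathbf{1}_{C_{\mathsf{1}}}]=[\mathbf{1}_{D_{\mathsf{1}}}]+[\mathbf{1}_{D_{\mathsf{2}}}]+2[\mathbf{1}_{D_{\mathsf{3}}}]
\]
in $\mathrm{K}_0(B)$, and hence $[f]=2\bigl([\mathbf{1}_{D_{\mathsf{0}}}]-[\mathbf{1}_{D_{\mathsf{3}}}]\bigr)$.

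Finally I appeal to the Durand--Host--Skau model from \Cref{prop:K-theory-rho}: its bottom tower partition is the partition of $X_\varrho$ by the $0$-th coordinate, so under $\mathrm{K}_0(B)\cong G/H$ the class $[\mathbf{1}_{D_{\mathsf{b}}}]$ is the image of the $\mathsf{b}$-th standard basis vector $e_{\mathsf{b}}\in\bb{Z}^4\subset G$. Thus $[f]$ corresponds to the class of $2(e_{\mathsf{0}}-e_{\mathsf{3}})=(2,0,0,-2)$ in $G/H$, which is nonzero because $H\subseteq\{q\in\bb{Q}^4:q_1=q_2\}$; indeed $\alpha_0$ sends it to $(0,2)\in\bb{Z}[\tfrac12]\oplus\bb{Z}$, so $2[f]$ corresponds to $(0,4)\neq0$. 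Therefore $\sigma_*([f])=-[f]\neq[f]$, and $\sigma_*$ is not the identity.

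The step I expect to be the real obstacle is the middle one: correctly bookkeeping how $\mathbf{1}_{C_{\mathsf{0}}}$ and $\mathbf{1}_{C_{\mathsf{1}}}$ decompose over $\underline{X}=\underline{X}^{(0)}\sqcup\underline{X}^{(1)}$, how the shift turns the $\underline{X}^{(1)}$-parts into cylinders over $\underline{X}^{(0)}$ governed by the neighbouring coordinate, and how $\beta$ recodes each of these as a union of length-one cylinders over $X_\varrho$; and one must make sure, against the conventions fixed in \Cref{prop:K-theory-rho}, that $[\mathbf{1}_{D_{\mathsf{b}}}]$ indeed corresponds to $e_{\mathsf{b}}+H$ (rather than to a permuted or rescaled vector). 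Everything else is formal.
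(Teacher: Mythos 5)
Your reduction is fine: with $f=\mathbf{1}_{C_{\mathsf{0}}}-\mathbf{1}_{C_{\mathsf{1}}}$ one indeed has $\sigma_*[f]=-[f]$, and since $\frac{C(\underline{X},\bb{Z})}{\mathrm{im}(1-\varphi_*)}\cong\bb{Z}[\tfrac12]\oplus\bb{Z}$ is torsion free (\Cref{thm:k_0-of-pf} with \cite{Put89}), it even suffices to show $[f]\neq 0$. The bookkeeping you worry about is also correct: $[g]=[gp]+[(g\circ\varphi)p]$ because $(g(1-p))\circ\varphi=(g\circ\varphi)p$ and classes are invariant under conjugation by $u_A$; the recoding of the events $x_0=\mathsf{a}$, $x_1=\mathsf{a}$ on $\underline{X}^{(0)}$ through $\beta$ and through the isomorphism $pAp\cong B$ of \Cref{thm:k_0-of-pf} gives exactly $[f]=2([\mathbf{1}_{D_{\mathsf{0}}}]-[\mathbf{1}_{D_{\mathsf{3}}}])$; and $(2,0,0,-2)\notin H$, with $\tilde\alpha_0$-image $(0,2)$. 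The genuine gap is the step you flag and then wave through: the assertion that under the isomorphism of \Cref{prop:K-theory-rho} the class $[\mathbf{1}_{D_{\mathsf{b}}}]$ corresponds to $e_{\mathsf{b}}+H$. \Cref{prop:K-theory-rho} only produces an abstract isomorphism of scaled ordered groups (it pins down the image of the unit, nothing more), so this refined statement is not available from the paper; it is a claim about how the Durand--Host--Skau machinery acts on specific cylinder classes. Establishing it means going back into the Bratteli--Vershik/Kakutani--Rokhlin construction: one must check that the stage-$n$ generators are the tower-base indicator classes, that stage $0$ really is the one-letter partition, and that the connecting maps are multiplication by $M$ as written (not $M^t$, nor a level-shifted or properness-adjusted version --- note $\varrho$ is only \emph{left}-proper, which is why the paper also invokes \cite[\S 2.4.1, Remark~(ii)]{DurHosSka99}). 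Your consistency checks (unit $\mapsto(1,1,1,1)$, all letter frequencies $\tfrac14$) cannot detect a transpose or permutation error precisely because the Perron data here are symmetric, so the identification really does need a proof; as it stands this is the mathematical heart of your argument and it is missing.

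For comparison, the paper avoids all of this: it shows directly that $\chi_{[\mathsf{1}]}-\chi_{[\mathsf{0}]}\notin\mathrm{im}(1-\varphi_*)$ by the elementary observation \eqref{eq:gottschalk-hedlund} that coboundaries have bounded Birkhoff sums, and then uses the self-similarity \eqref{eq:pf-self-similarity} to exhibit prefixes of $\underline{t}$ with $|\underline{t}_{[0,m_n]}|_{\mathsf{1}}-|\underline{t}_{[0,m_n]}|_{\mathsf{0}}=n+1$. That argument is self-contained and convention-free. If you want to keep your K-theoretic route, the cleanest repair is to note that the same Birkhoff-sum criterion applied in $X_\varrho$ (counting occurrences of $\mathsf{0}$ versus $\mathsf{3}$ in prefixes of $\underline{r}$, which is equivalent to the paper's count via the two-block recoding of \Cref{rmk:r}) shows $[\mathbf{1}_{D_{\mathsf{0}}}]\neq[\mathbf{1}_{D_{\mathsf{3}}}]$ without invoking the cylinder-to-basis-vector dictionary at all --- but at that point you have essentially reproduced the paper's proof, and the transport through $A\cong\mathrm{M}_2(pAp)\cong\mathrm{M}_2(B)$ becomes unnecessary.
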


\begin{proof}
We will show that there is some $h\in C(\underline{X},\bb{Z})$ such that $h-\sigma_*(h)$ does not lie in $\mathrm{im}(1-\varphi_*)$, which is equivalent to $[h] \ne \sigma_*[h]$. Note that if $g\in\mathrm{im}(1-\varphi_*)$, then by writing $g=f-f\circ\varphi^{-1}$ for some $f\in C(\underline{X},\bb{Z})$ we have that
\begin{align}\label{eq:additional}
\bigg|\sum_{j=0}^ng(\varphi^j(x))\bigg|&=|f(\varphi^n(x))-f(\varphi^{-1}(x))| \nonumber\\
&\le 2\|f\|_\infty 
\end{align}
for all $x\in \underline{X}$ and all $n\in\bb{N}$, which is to say that
\begin{equation}\label{eq:gottschalk-hedlund}
\sup_{x\in \underline{X},n\in\bb{N}}\bigg|\sum_{j=0}^ng(\varphi^j(x))\bigg|<\infty.
\end{equation}
(Note that \eqref{eq:gottschalk-hedlund} is also a sufficient condition for $g$ to lie in $\mathrm{im}(1-\varphi_*)$ due to the Gottschalk--Hedlund theorem, but we shall make no use of this.)

Let $[\mathsf{1}]$ and $[\mathsf{0}]$ denote the (clopen) subsets of $\underline{X}$ of those $(x_j)_{j\in\bb{Z}}\in \underline{X}$ such that $x_0=\mathsf{1}$ and $x_0=\mathsf{0}$ respectively. Note that $\chi_{[\mathsf{1}]}-\sigma_*(\chi_{[\mathsf{1}]})=\chi_{[\mathsf{1}]}-\chi_{[\mathsf{0}]}$ and for the specific $g := \chi_{[\mathsf{1}]}-\chi_{[\mathsf{0}]} \in C(\underline{X},\bb{Z})$ observe that for $x\in \underline{X}$ and $n\in\bb{N}$ we have
\begin{equation}\label{eq:formula}
\sum_{j=0}^ng(\varphi^j(x))=|x_{[0,n]}|_\mathsf{1}-|x_{[0,n]}|_\mathsf{0}.
\end{equation}
Let $\tilde{x}\in \underline{X}$ be such that $\tilde{x}_{[0,\infty)}=\underline{t}$ (cf.\ \Cref{lem:bi-infinite-t}) and recursively define a sequence $(m_n)_{n=0}^\infty\subset\bb{N}$ by setting $m_0\coloneqq 0$ and 
\begin{equation}\label{eq:mn}
m_{n+1}\coloneqq\begin{cases} 2m_n+2,\quad \text{if } n \text{ is odd} \\ 2m_n+1,\quad \text{ if } n \text{ is even}. \end{cases}
\end{equation}
Note that $m_n$ is odd if and only if $n$ is odd (and so $m_n$ is even if and only if $n$ is even), and that $m_n\le 2^{n+1}-2$ for all $n\in\bb{N}$. We claim that
\begin{equation}\label{eq:claim}
|\underline{t}_{[0,m_n]} |_{\mathsf{1}}  - |\underline{t}_{[0,m_n]}|_{\mathsf{0}} = n+1,\quad n\in\bb{N}
\end{equation}
which we prove by induction. For $n=0$, the claim is directly seen to be true. Assume now that the claim is true for some even $n\in\bb{N}$ and let us prove it for $n+1$. Since $m_{n+1}\le 2^{n+2}-2$ and $|t_{n+1}|=2^{n+2}-1$, we have that $\underline{t}_{[0,m_{n+1}]}$ is an initial segment of $t_{n+1}$. By \eqref{eq:pf-self-similarity} (with $p=0$ therein)
\[
t_{n+1} = \mathsf{1} x_1 \mathsf{0} x_2 \mathsf{1} x_3 \mathsf{0} x_4 \mathsf{1} \dots \mathsf{0} x_{2^{n+1}-2} \mathsf{1} x_{2^{n+1}-1} \mathsf{0}
\]
with $x_1\dots x_{2^{n+1}-1}=t_{n}$, and so $x_1\dots x_{m_n}x_{m_n+1} = \underline{t}_{[0,m_n]}$. Since $m_{n+1}=2m_n+1$, and since $m_n$ is even,
\[
\underline{t}_{[0,m_{n+1}]} = \mathsf{1} x_1 \mathsf{0} x_2 \mathsf{1}x_3 \mathsf{0}x_4\dots \mathsf{0} x_{m_n-2} \mathsf{1} x_{m_n-1}\mathsf{0}x_{m_n}\mathsf{1}x_{m_n+1},
\]
whence 
\[
|\underline{t}_{[0,m_{n+1}]}|_\mathsf{1} =  |\underline{t}_{[0,m_n]}|_{\mathsf{1}} + (m_n+1)/2,
\] 
while 
\[
|\underline{t}_{[0,m_{n+1}]}|_\mathsf{0} = |\underline{t}_{[0,m_n]}|_\mathsf{0} + (m_n-1)/2
\]
and so 
\begin{align*}
|\underline{t}_{[0,m_{n+1}]}|_\mathsf{1}-|\underline{t}_{[0,m_{n+1}]}|_\mathsf{0} &= |\underline{t}_{[0,m_n]}|_\mathsf{1} -|\underline{t}_{[0,m_n]}|_\mathsf{0} +1 \\
& = n+2,
\end{align*}
as we wanted. A similar argument proves the claim for $n+1$ under the assumption that the claim is true for some odd $n\in\bb{N}$.

Since $\tilde{x}_{[0,\infty)}=t$, by \eqref{eq:claim} and \eqref{eq:formula} we see that \eqref{eq:gottschalk-hedlund} is violated for this particular $g$, and thus $\sigma_*$ does not act as the identity on the equivalence class of $\chi_{[\mathsf{1}]}$, which completes the proof.
\end{proof}

\begin{corollary}\label{cor:1-plus-sigma}
We have an isomorphism of abelian groups
\begin{equation}
(1+\sigma_*)\bigg(\frac{C(\underline{X},\bb{Z})}{\mathrm{im}(1-\varphi_*)}\bigg) \cong \bb{Z}[\tfrac{1}{2}].
\end{equation}
\end{corollary}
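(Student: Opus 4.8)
The plan is to reduce everything to a computation inside the scaled ordered group $G\coloneqq\frac{C(\underline{X},\bb{Z})}{\mathrm{im}(1-\varphi_*)}$. Recalling the standard identification of $\mathrm{K}_0(C(\underline{X})\rtimes_\varphi\bb{Z})$ with $G$ as scaled ordered groups (under which $[1]$ corresponds to the order unit), \Cref{thm:k_0-of-pf} lets us fix an isomorphism
\[
G\ \cong\ \big(\bb{Z}[\tfrac12]\oplus\bb{Z},\ \{(s,n):s>0,\ n\in\bb{Z}\}\cup\{(0,0)\},\ (1,0)\big).
\]
By \Cref{rmk:sigmastar-induces}, $\sigma_*$ is an automorphism of $G$ of order $2$ that preserves the positive cone and fixes the order unit $(1,0)$, and by the preceding lemma $\sigma_*\ne\mathrm{id}$. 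What remains is to determine $\sigma_*$ finely enough to read off $\mathrm{im}(1+\sigma_*)$.

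The first step is to isolate the $\bb{Z}[\tfrac12]$-summand intrinsically. Since $\bb{Z}[\tfrac12]$ is $2$-divisible, one has $2^nG=\bb{Z}[\tfrac12]\oplus 2^n\bb{Z}$ for all $n$, so
\[
D\coloneqq\bigcap_{n\ge1}2^nG\ =\ \bb{Z}[\tfrac12]\oplus\{0\}\ \cong\ \bb{Z}[\tfrac12],\qquad G/D\cong\bb{Z}.
\]
Being defined by a purely group-theoretic recipe, $D$ is invariant under every automorphism of $G$; in particular $\sigma_*(D)=D$. Now every additive automorphism of $\bb{Z}[\tfrac12]$ is multiplication by a unit, i.e.\ by some $\pm2^k$, and since $\sigma_*|_D$ fixes the nonzero element $(1,0)\in D$ it is multiplication by $1$; hence $\sigma_*|_D=\mathrm{id}_D$.

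Next I would pin down the induced involution $\bar\sigma_*$ of $G/D\cong\bb{Z}$, which must be $\pm\mathrm{id}$. If $\bar\sigma_*=\mathrm{id}$, then $\delta\coloneqq\sigma_*-\mathrm{id}$ takes values in $D$ (it is zero modulo $D$) and vanishes on $D$ (since $\sigma_*|_D=\mathrm{id}$), so for every $g\in G$ one gets $\sigma_*^2(g)=\sigma_*(g+\delta(g))=g+2\delta(g)$, whence $2\delta=0$; torsion-freeness of $G$ then forces $\delta=0$, i.e.\ $\sigma_*=\mathrm{id}$, contradicting the preceding lemma. Hence $\bar\sigma_*=-\mathrm{id}$, so $1-\sigma_*$ induces multiplication by $2$ on $G/D\cong\bb{Z}$. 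It follows that $\ker(1-\sigma_*)=D$: the inclusion $D\subseteq\ker(1-\sigma_*)$ is clear, and if $g\in\ker(1-\sigma_*)$ then the class of $g$ in $G/D$ lies in $\ker(\text{mult.\ by }2)=\{0\}$, so $g\in D$.

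Finally, $\mathrm{im}(1+\sigma_*)\subseteq\ker(1-\sigma_*)=D$ because $(1-\sigma_*)(1+\sigma_*)=1-\sigma_*^2=0$, while $(1+\sigma_*)(d)=2d$ for $d\in D$ gives $\mathrm{im}(1+\sigma_*)\supseteq 2D=D$ by $2$-divisibility; hence $\mathrm{im}(1+\sigma_*)=D\cong\bb{Z}[\tfrac12]$. There is no genuine obstacle here once \Cref{thm:k_0-of-pf} and the preceding lemma are in hand: the only thing to notice is that $\bb{Z}[\tfrac12]$ sits inside $G$ as the characteristic subgroup $\bigcap_n2^nG$, which forces $\sigma_*$ to respect it, after which the order unit and the non-triviality of $\sigma_*$ determine its action completely. (Alternatively, using the infinitesimal subgroup $\mathrm{Inf}(G)=\{0\}\oplus\bb{Z}$ together with $\mathrm{Hom}(\bb{Z}[\tfrac12],\bb{Z})=0$, one obtains the sharper description $\sigma_*(s,n)=(s,-n)$, from which the conclusion is immediate.)
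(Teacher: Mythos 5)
Your proposal is correct and follows essentially the same route as the paper: identify the group with $\bb{Z}[\tfrac{1}{2}]\oplus\bb{Z}$ via \Cref{thm:k_0-of-pf}, use $2$-divisibility together with the fixed order unit to see that $\sigma_*$ is the identity on the dyadic summand, invoke the preceding lemma to rule out $\sigma_*=\mathrm{id}$ and force $-\mathrm{id}$ in the $\bb{Z}$-direction, and conclude that the image of $1+\sigma_*$ is exactly $\bb{Z}[\tfrac{1}{2}]\oplus 0$. The only (cosmetic) difference is bookkeeping: you phrase the invariance of the dyadic summand via the characteristic subgroup $\bigcap_{n}2^nG$ and pass to the quotient $G/D\cong\bb{Z}$, whereas the paper writes $\sigma_*(0,n)=(\alpha_1(n),\alpha_2(n))$ and computes $(1+\sigma_*)(q,n)=(2q+\alpha_1(n),0)$ directly.
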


\begin{proof} 
By \Cref{thm:k_0-of-pf} together with \cite[Theorem~1.1]{Put89} we get $\frac{C(\underline{X},\bb{Z})}{\mathrm{im}(1-\varphi_*)} \cong \mathrm{K}_0(C(\underline{X})\rtimes_\varphi\bb{Z})\cong \bb{Z}[\tfrac{1}{2}]\oplus\bb{Z}$ with an isomorphism that carries $[1]$ to $(1,0)$. Under this isomorphism, $\sigma_*$ becomes an order $2$ automorphism on $\bb{Z}[\tfrac{1}{2}]\oplus\bb{Z}$ that maps $(1,0)$ to itself, and by $2$-divisibility of $\bb{Z}[\tfrac{1}{2}]$, it follows that $\sigma_*(q,0)=(q,0)$ for all $q\in\bb{Z}[\tfrac{1}{2}]$. Moreover, there are group homomorphisms $\alpha_1\colon\bb{Z}\to\bb{Z}[\tfrac{1}{2}]$ and $\alpha_2\colon\bb{Z}\to\bb{Z}$ such that 
\[
\sigma_*(0,n) = (\alpha_1(n),\alpha_2(n)),\quad n\in\bb{Z},
\]
and so
\begin{align*}
(0,n) &= \sigma_*^2(0,n) \\
&= \sigma_*(\alpha_1(n),\alpha_2(n)) \\
&= (\alpha_1(n+\alpha_2(n)), \alpha_2^2(n)), \quad n\in\bb{Z}.
\end{align*}
We then have that $\alpha_2^2=\mathrm{id}_\bb{Z}$, and so $\alpha_2=\mathrm{id}_\bb{Z}$ or $\alpha_2=-\mathrm{id}_\bb{Z}$. If however $\alpha_2=\mathrm{id}_\bb{Z}$, then $2\alpha_1(n)=\alpha_1(2n)=0$ for all $n\in\bb{Z}$ and so $\alpha_1=0$ (by $2$-divisibility of $\bb{Z}[\tfrac{1}{2}]$), whence $\sigma_*=\mathrm{id}$, which is impossible by the preceding lemma. We thus have $\alpha_2=-\mathrm{id}_\bb{Z}$, hence for any $(q,n)\in\bb{Z}[\tfrac{1}{2}]\oplus\bb{Z}$ we have
\begin{align*}
(1+\sigma_*)(q,n) &= (q,n) + \sigma_*(q,n ) \\
&= (q,n) + (q+\alpha_1(n), -n) \\
&= (2q+\alpha_1(n),0)
\end{align*}
which shows that $(1+\sigma_*)(\bb{Z}[\tfrac{1}{2}]\oplus\bb{Z})\subset\bb{Z}[\tfrac{1}{2}]\oplus 0$. The reverse inclusion is now also immediate since $(q,0) = (1+\sigma_*)(q/2,0)$, and thus we conclude that $(1+\sigma_*)(\bb{Z}[\tfrac{1}{2}]\oplus\bb{Z}) = \bb{Z}[\tfrac{1}{2}]\oplus0\cong\bb{Z}[\tfrac{1}{2}]$ as we wanted.
\end{proof}

We are now headed towards computing the $\mathrm{K}$-theory of the crossed product $\mathrm{C}^\ast$-algebra of the action $(\varphi,\sigma)\colon\bb{Z}\rtimes\bb{Z}_2\acts \underline{X}$ of \Cref{cor:free-min-dihedral-pf}. 

\begin{remark}\label{rmk:erratum}
The $\mathrm{K}$-theory of crossed products of Cantor minimal $\bb{Z}\rtimes\bb{Z}_2$ systems is treated in \cite[Section~4]{Tho10}, building on \cite{BraEvaKis93} (and mainly using \cite{Nat85}). In particular, let $(\varphi,\sigma)\colon\bb{Z}\rtimes\bb{Z}_2\acts \Omega$ be a free action on a Cantor space $\Omega$ with $\varphi\colon\bb{Z}\acts \Omega$ minimal and let $A\coloneqq C(\Omega)\rtimes(\bb{Z}\rtimes\bb{Z}_2)$. In \cite[Theorem~4.42]{Tho10} it is shown that  
\begin{equation}\label{eq:thomsen}
\mathrm{K}_0(A) \cong \bb{Z}_2\oplus (1+\sigma_*)(C(\Omega,\bb{Z})/\mathrm{im}(1-\varphi_*))
\end{equation}
as abelian groups. This adjusts the respective calculation from \cite{BraEvaKis93}, where the summand $\bb{Z}_2$ was overlooked in the case of absence of fixed points, cf.\ \cite[Remark~4.43]{Tho10}. In \cite[Equation~(4.24)]{Tho10} Thomsen tacitly makes an identification when describing $\mathrm{K}_0(A)$ as the cokernel of the map $(i_{1*},i_{2*})$ (as opposed to the cokernel of $(i_{1*}, -i_{2*})$, in accordance with the exact sequences in \cite[Theorem~A1]{Nat85} and \cite[Lemma~4.2]{BraEvaKis93}; cf.\ \cite{Kum90}). For our purposes it is important to keep track of where $[1_A]_0$ is mapped through the isomorphism in \eqref{eq:thomsen}, which is why below we revisit Thomsen's proof of \cite[Theorem~4.42]{Tho10}, yet bypassing the aforementioned identification.

Note first that freeness ensures that there are clopen sets $K, L\subset \Omega$ such that
\begin{equation}\label{eq:free-involutions}
K\sqcup \sigma(K) = \Omega \text{ and }L\sqcup\varphi\sigma(L)=\Omega,
\end{equation}
cf.\ \cite[Lemma~4.38]{Tho10}. Just as in \cite[p.~302]{Tho10}, set
\begin{align*}
G_\sigma &\coloneqq \{f\in C(\Omega,\bb{Z}): f\circ\sigma =f\},\\
G_{\varphi\sigma}&\coloneqq \{f\in C(\Omega,\bb{Z}): f\circ\varphi\sigma =f\}.
\end{align*}
We have isomorphisms $\mathrm{K}_0(C(\Omega)\rtimes_\sigma\bb{Z}_2)\cong G_\sigma$ and $\mathrm{K}_0(C(\Omega)\rtimes_{\varphi\sigma}\bb{Z}_2)\cong G_{\varphi\sigma}$ satisfying $[\chi_E]_0 \mapsto \chi_E+\chi_{\sigma(E)}$ and $[\chi_E]_0\mapsto \chi_E+\chi_{\varphi\sigma(E)}$ respectively, for any clopen set $E \subset\Omega$ (see \cite[p.~302]{Tho10}). By \cite[Lemma~4.2]{BraEvaKis93} (based on \cite{Nat85}) we thus have a commutative diagram with rows that are short exact sequences
\begin{equation*}
\begin{tikzcd}[column sep =2.7ex]
0\ar[r]& \mathrm{K}_0(C(\Omega))\ar[r,"\iota"]\ar[d,"\cong"] & \mathrm{K}_0(C(\Omega)\rtimes_\sigma\bb{Z}_2)\oplus \mathrm{K}_0(C(\Omega)\rtimes_{\varphi\sigma}\bb{Z}_2)\ar[d,"\cong"] \ar[r,"\pi"] & \mathrm{K}_0(A)\ar[d, "\mathrm{id}"] \ar[r]&0 \\
0\ar[r]& C(\Omega,\bb{Z}) \ar[r, "\bar{\iota}"] & G_\sigma\oplus G_{\varphi\sigma}\ar[r, "\bar{\pi}"]&\mathrm{K}_0(A)\ar[r]&0
\end{tikzcd}
\end{equation*}
where $\iota\coloneqq (i_{1*},-i_{2*})$, $\pi \coloneqq j_{1*}+j_{2*}$ with 
\[
C(\Omega)\xhookrightarrow{i_1}C(\Omega)\rtimes_\sigma\bb{Z}_2\xhookrightarrow{j_1}A \text{ and } C(\Omega)\xhookrightarrow{i_2}C(\Omega)\rtimes_{\varphi\sigma}\bb{Z}_2\xhookrightarrow{j_2}A
\]
being the canonical embeddings, 
\begin{equation}\label{eq:bariota}
\bar{\iota}(h) \coloneqq (h+h\circ\sigma, -h-h\circ\varphi\sigma),\quad h\in C(\Omega,\bb{Z}),
\end{equation}
and $\bar{\pi}$ satisfies
\begin{equation}\label{eq:barpi}
\bar{\pi}(\chi_E+\chi_{\sigma(E)},\chi_F+\chi_{\varphi\sigma(F)}) = [\chi_E]_0+ [ \chi_F]_0
\end{equation}
for all clopen sets $E,F\subset \Omega$.

Consider the map $\bb{Z}\ni z\mapsto (z,-z)\in G_\sigma\oplus G_{\varphi\sigma}$ (essentially as in \cite[p.~302]{Tho10}), which by minimality is seen to induce an injection $t\colon \bb{Z}_2\to \mathrm{K}_0(A)$ exactly as in the lines following \cite[Equation~(4.27)]{Tho10}. Note also that the $2$-torsion element $t(1)\in \mathrm{K}_0(A)$ is given by $t(1)=[\chi_K]_0 - [\chi_L]_0$ (cf.\ \eqref{eq:free-involutions}).

We consider the map $G_\sigma\oplus G_{\varphi\sigma}\ni(f,g)\mapsto f+g\in C(\Omega,\bb{Z})$ composed with the quotient map $C(\Omega,\bb{Z})\to C(\Omega,\bb{Z})/\mathrm{im}(1-\varphi_*)$, which vanishes on elements of the form $(h+h\circ\sigma, -h-h\circ\varphi\sigma)$, and thus induces a map $s\colon \mathrm{K}_0(A) \to C(\Omega,\bb{Z})/\mathrm{im}(1-\varphi_*)$. Upon making the necessary (albeit straightforward) modifications in the proof of \cite[Lemma~4.41]{Tho10} we see that
\begin{equation}\label{eq:thomsen-exact}
0\xlongrightarrow{} \bb{Z}_2\xlongrightarrow{t}\mathrm{K}_0(A)\xlongrightarrow{s}(1+\sigma_*)(C(\Omega,\bb{Z})/\mathrm{im}(1-\varphi_*))\xlongrightarrow{}0
\end{equation}
is short-exact. Finally, by exactness of \eqref{eq:thomsen-exact}, since the quotient is torsion free and since $\bb{Z}_2$ is cyclic, one applies Kulikov's theorem \cite[Theorem~24.5]{Fuc60} exactly as in the proof of \cite[Theorem~4.42]{Tho10} to obtain that \eqref{eq:thomsen-exact} is split exact, and in particular, as abelian groups,
\[
\mathrm{K}_0(A)\cong \bb{Z}_2\oplus (1+\sigma_*)(C(\Omega,\bb{Z})/\mathrm{im}(1-\varphi_*)).
\]
Note that $[1_A]_0 = \bar{\pi}(2,0)$ by \eqref{eq:barpi}, whence $s([1_A]_0)= (1+\sigma_*)([1])$ (which is equal to $2\cdot[1]$ since $\sigma_*([1])=[1]$), and so under the isomorphism above, $[1_A]_0$ is identified with either $(0,2\cdot[1])$ or $(1,2\cdot[1])$. We embrace this ambiguity for the time being also in the following proposition, which will give sufficiently precise information to prove our main result in the subsequent section.
\end{remark}

\begin{proposition}\label{thm:k-th-pf-dihedral}
Let $A\coloneqq C(\underline{X})\rtimes(\bb{Z}\rtimes\bb{Z}_2)$ be the crossed product associated to the action of \Cref{cor:free-min-dihedral-pf}. Then, $A$ is a classifiable $\mathrm{C}^\ast$-algebra (cf.\ \Cref{footnote}) that has unique trace, $\mathrm{K}_1(A)=0$, and $\big(\mathrm{K}_0(A),\mathrm{K}_0(A)_+,[1_A]_0\big)$ is isomorphic as a scaled ordered group either to
\begin{equation}
\big(\bb{Z}_2\oplus\bb{Z}[\tfrac{1}{2}], \{(x,q): x\in\bb{Z}_2, q>0\}\cup\{(0,0)\} , (0,1)\big),
\end{equation}
or to
\begin{equation}
\big(\bb{Z}_2\oplus\bb{Z}[\tfrac{1}{2}], \{(x,q): x\in\bb{Z}_2, q>0\}\cup\{(0,0)\} , (1,1)\big).
\end{equation}
Moreover, the pair $(C(\underline{X})\subset A)$ is a Cantor spectrum diagonal.
\end{proposition}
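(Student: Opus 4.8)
The plan is to dispatch the four assertions in turn. First I would check that $A$ is simple: the action $(\varphi,\sigma)\colon\bb{Z}\rtimes\bb{Z}_2\acts\underline{X}$ of \Cref{cor:free-min-dihedral-pf} is free, so in particular topologically free, and its restriction to $\bb{Z}$ is minimal; hence $A=C(\underline{X})\rtimes(\bb{Z}\rtimes\bb{Z}_2)$ is simple by \cite{ArcSpi94}. It is evidently unital and separable, it is nuclear since $\bb{Z}\rtimes\bb{Z}_2$ is amenable, and it satisfies the UCT (the transformation groupoid is amenable). For classifiability it then suffices to see that $A$ has finite nuclear dimension, equivalently is $\mc{Z}$-stable \cite{Win12}; here I would write $A=B_0\rtimes_{\tilde{\sigma}}\bb{Z}_2$ with $B_0\coloneqq C(\underline{X})\rtimes_\varphi\bb{Z}$, note that $B_0$ is the crossed product of a minimal homeomorphism of the Cantor space and hence classifiable by \cite{TomWin13}, and use that passing to a crossed product by the finite group $\bb{Z}_2$ keeps the nuclear dimension finite (alternatively: $\underline{X}\rtimes(\bb{Z}\rtimes\bb{Z}_2)$ is a minimal, principal, amenable, \'etale groupoid with Cantor unit space and finite dynamic asymptotic dimension, since $\bb{Z}\rtimes\bb{Z}_2$ has asymptotic dimension one). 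Thus $A$ is classifiable in the sense of \Cref{footnote}, by \cite{Whi23, Win18, TikWhiWin17}.

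Next I would establish that $A$ is monotracial. As $\bb{Z}\rtimes\bb{Z}_2$ is amenable, $A=C^\ast_r\big(\underline{X}\rtimes(\bb{Z}\rtimes\bb{Z}_2)\big)$ for a minimal principal \'etale groupoid, whose tracial states correspond bijectively to the $\bb{Z}\rtimes\bb{Z}_2$-invariant Borel probability measures on $\underline{X}$. Every such measure is $\varphi$-invariant, and $(\underline{X},\varphi)$ is uniquely ergodic: by \Cref{rmk:r} and \Cref{lem:t-to-r} the map $\beta$ conjugates $(\underline{X}^{(0)},\varphi^2)$ to the primitive substitution system $(X_\varrho,\varphi)$, which is uniquely ergodic, and since $\underline{X}=\underline{X}^{(0)}\sqcup\underline{X}^{(1)}$ with $\varphi$ interchanging the two clopen halves, unique ergodicity of $(\underline{X},\varphi)$ follows. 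The unique $\varphi$-invariant probability measure $\mu$ moreover satisfies $\varphi_*(\sigma_*\mu)=\sigma_*(\varphi_*^{-1}\mu)=\sigma_*\mu$ because $\varphi\sigma=\sigma\varphi^{-1}$, so $\sigma_*\mu=\mu$ and $\mu$ is already $\bb{Z}\rtimes\bb{Z}_2$-invariant; it is unique with this property. By minimality $\mu$ has full support, so the corresponding trace $\tau$ is faithful; hence $A$ is stably finite and monotracial.

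The $\mathrm{K}$-theory is then read off from \Cref{rmk:erratum}, applied to our free $\bb{Z}\rtimes\bb{Z}_2$-action: it gives $\mathrm{K}_1(A)=0$ (also visible from the Mayer--Vietoris sequence recalled there, since $\mathrm{K}_1(C(\underline{X}))=0$ and the two $\bb{Z}_2$-crossed products are AF) and $\mathrm{K}_0(A)\cong\bb{Z}_2\oplus(1+\sigma_*)\big(C(\underline{X},\bb{Z})/\mathrm{im}(1-\varphi_*)\big)$, with $[1_A]_0$ sent to $(\varepsilon,2[1])$ for some $\varepsilon\in\{0,1\}$. By \Cref{cor:1-plus-sigma} the second summand is $\bb{Z}[\tfrac{1}{2}]$, and composing with the order-preserving automorphism $q\mapsto q/2$ of $\bb{Z}[\tfrac{1}{2}]$ I may take $[1_A]_0=(\varepsilon,1)$. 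For the positive cone I would use that $A$, being simple, unital, exact, stably finite and $\mc{Z}$-stable, has strict comparison, so that $\mathrm{K}_0(A)_+\setminus\{0\}=\{x\in\mathrm{K}_0(A):\tau_*(x)>0\}$ with $\tau_*\colon\mathrm{K}_0(A)\to\bb{R}$ induced by $\tau$. Since $\tau_*$ annihilates the torsion summand $\bb{Z}_2$ and $\tau_*([1_A]_0)=1>0$, its restriction to $\bb{Z}[\tfrac{1}{2}]$ is a nonzero --- hence injective, and after rescaling equal to the coordinate map $(\varepsilon,q)\mapsto q$ --- homomorphism, so $\mathrm{K}_0(A)_+=\{(\varepsilon,q):\varepsilon\in\bb{Z}_2,\ q>0\}\cup\{(0,0)\}$. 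This gives one of the two listed scaled ordered groups, the residual $\bb{Z}_2$-ambiguity being precisely the one inherited from \Cref{rmk:erratum}.

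Finally, for the diagonal: $A=C(\underline{X})\rtimes(\bb{Z}\rtimes\bb{Z}_2)$ is the reduced $\mathrm{C}^\ast$-algebra of the \'etale transformation groupoid $\underline{X}\rtimes(\bb{Z}\rtimes\bb{Z}_2)$, whose unit space is the Cantor set $\underline{X}$; freeness of the action (\Cref{cor:free-min-dihedral-pf}) makes this groupoid principal, and a transformation groupoid carries the trivial twist, so by Renault's theorem (\cite{Ren08}; cf.\ \cite{Kum86}) $(C(\underline{X})\subset A)$ is a $\mathrm{C}^\ast$-diagonal, with Cantor spectrum. The step I expect to be the main obstacle is the last part of the $\mathrm{K}$-theory computation: one must track $[1_A]_0$ faithfully through the isomorphisms of \Cref{rmk:erratum} and \Cref{cor:1-plus-sigma}, and be sure that $\mc{Z}$-stability genuinely forces the positive cone of $\mathrm{K}_0(A)$ to be governed by the unique trace alone. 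The $\bb{Z}_2$-ambiguity in $[1_A]_0$ is real at this stage and is simply carried into the statement; it is resolved only after tensoring with $\mathrm{M}_{2^\infty}$ in the next section.
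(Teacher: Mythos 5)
The weak point is your justification of $\mc{Z}$-stability (equivalently, finite nuclear dimension). You write $A=B_0\rtimes\bb{Z}_2$ with $B_0=C(\underline{X})\rtimes_\varphi\bb{Z}$ classifiable and then assert that passing to a crossed product by the finite group $\bb{Z}_2$ keeps the nuclear dimension finite. That is not an available permanence property: for a general action of a finite group no bound on $\dim_{\mathrm{nuc}}(B\rtimes G)$ in terms of $\dim_{\mathrm{nuc}}(B)$ is known; the results in this direction require finite Rokhlin dimension or strong outerness-type hypotheses (Matui--Sato style absorption), neither of which you verify (strong outerness would in fact follow here from freeness and unique ergodicity, but that argument is absent and would need citations beyond the paper's). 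Your parenthetical alternative -- finite dynamic asymptotic dimension ``since $\bb{Z}\rtimes\bb{Z}_2$ has asymptotic dimension one'' -- is closer to what the paper actually does, but finiteness of the asymptotic dimension of the acting group does not by itself yield finite dynamic asymptotic dimension of a free action; one needs the specific results the paper invokes: polynomial growth implies almost finiteness \cite{DowZha23}, hence $\mc{Z}$-stability by \cite{Ker20}, or finite nuclear dimension via \cite[Corollary~C]{BoeLi24} together with \cite{Win12}. As written, this step is a genuine gap, though it is repairable along either of these lines.

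The remaining steps are correct and largely parallel the paper, with two legitimate differences worth noting. For unique ergodicity you use the explicit conjugacy $\beta\colon(\underline{X}^{(0)},\varphi^2)\to(X_\varrho,\varphi_{\mathsf{A}})$ from \Cref{lem:t-to-r} and the decomposition $\underline{X}=\underline{X}^{(0)}\sqcup\underline{X}^{(1)}$, then deduce $\sigma_*\mu=\mu$ from anticommutation; this is more elementary and self-contained than the paper's route via strong orbit equivalence and \cite{GioPutSka95}. For the positive cone you use strict comparison (from $\mc{Z}$-stability and the unique trace) instead of the paper's argument via tracial rank zero \cite[Corollary~4.33]{Tho10}, \cite{Lin01} and \cite[Theorem~6.8.5]{Bla98}; both give $\mathrm{K}_0(A)_+=\{x:\tau_*(x)>0\}\cup\{0\}$, but note that your version makes the cone computation depend on the $\mc{Z}$-stability step you have not yet secured. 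The K-theoretic bookkeeping ($\mathrm{K}_1(A)=0$, $[1_A]_0\mapsto(\varepsilon,2[1])$, rescaling by $2$-divisibility, and the residual $\bb{Z}_2$-ambiguity) agrees with \Cref{rmk:erratum} and \Cref{cor:1-plus-sigma}, and the diagonal statement from freeness is as in the paper; your UCT argument via amenability of the transformation groupoid is fine, where the paper instead quotes \cite{BarLi17} for the Cartan pair.
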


\begin{proof} 
Clearly $A$ is a unital and separable $\mathrm{C}^\ast$-algebra, which is also nuclear as the crossed product of an action of an amenable group on a compact metric space. Moreover $A$ is simple, as the crossed product of a free minimal action; see \cite{ArcSpi94}. By \cite{DowZha23}, since $\bb{Z}\rtimes\bb{Z}_2$ has polynomial growth, the underlying action is almost finite, and so $A$ is $\mc{Z}$-stable by \cite{Ker20}. (Alternatively, note that $A$ has finite nuclear dimension by \cite[Corollary~C]{BoeLi24}, and thus is $\mathcal{Z}$-stable by \cite{Win12}.) The pair $(C(\underline{X})\subset A)$ is a $\mathrm{C}^\ast$-diagonal since the action $\bb{Z}\rtimes\bb{Z}_2\acts \underline{X}$ is free, and thus $A$ also satisfies the UCT by \cite{BarLi17}. 

Note that the action $\varphi\colon \bb{Z}\acts \underline{X}$ is uniquely ergodic (this follows from the fact that $\varphi\colon \bb{Z}\acts X_\varrho$ is uniquely ergodic and that the two Cantor minimal systems are (strongly) orbit equivalent in the sense of \cite{GioPutSka95}, due to \Cref{thm:k_0-of-pf} and \Cref{prop:K-theory-rho}, and thus have affinely homeomorphic simplices of invariant measures \cite[Theorem~2.2]{GioPutSka95}). Since the invariant measures of the dynamical system $\bb{Z}\rtimes\bb{Z}_2\acts \underline{X}$ are a subset of the invariant measures of the action of the $\bb{Z}$-subgroup, and since any action of an amenable group admits an invariant measure, $\bb{Z}\rtimes\bb{Z}_2\acts \underline{X}$ is uniquely ergodic, and thus $A$ has a unique trace, since in this setting there is an affine homeomorphism between the space of tracial states on $A$ and the Borel probability $\bb{Z}\rtimes\bb{Z}_2$-invariant measures on $\underline{X}$ (cf.\ \cite[\S 4.1]{LiRen19}).

It follows from \cite[Theorem~4.42]{Tho10} that $\mathrm{K}_1(A)=0$, so it remains to compute the $\mathrm{K}_0$-group. Combining \Cref{cor:1-plus-sigma} with \Cref{rmk:erratum}, we have that $\mathrm{K}_0(A)\cong \bb{Z}_2\oplus \bb{Z}[\tfrac{1}{2}]$, and $[1_A]_0$ corresponds either to the element $(0,4)$ or to $(1,4)$. Moreover, since the map $\bb{Z}_2\oplus\bb{Z}[\tfrac{1}{2}]\ni(x,y)\mapsto(x,\tfrac{1}{4}y)\in\bb{Z}_2\oplus\bb{Z}[\tfrac{1}{2}]$ is a group automorphism that maps $(0,4)$ to $(0,1)$ and $(1,4)$ to $(1,1)$, we obtain a group isomorphism $\mathrm{K}_0(A)\cong\bb{Z}_2\oplus\bb{Z}[\tfrac{1}{2}]$ with $[1_A]_0$ identified either with $(0,1)$ or with $(1,1)$.

Since $A$ is simple, the ordered group $(\mathrm{K}_0(A),\mathrm{K}_0(A)_+)$ is simple, and by \cite[Corollary~4.33]{Tho10} $A$ has tracial topological rank zero (in the sense of Lin \cite[Definition~3.3.4]{RorSto02}), whence by \cite{Lin01} we have that the ordered group $(\mathrm{K}_0(A),\mathrm{K}_0(A)_+)$ is weakly unperforated. We can thus apply \cite[Theorem~6.8.5]{Bla98} to conclude that $\mathrm{K}_0(A)_+$ is equal to the set
\begin{equation}\label{eq:cone-determined}
\{g\in\mathrm{K}_0(A): \gamma(g)>0\text{ for all }\gamma\in\mathrm{S}(\mathrm{K}_0(A),\mathrm{K}_0(A)_+,[1_A]_0)\}\cup\{0\},
\end{equation}
where $\mathrm{S}(\mathrm{K}_0(A),\mathrm{K}_0(A)_+,[1_A]_0)$ denotes the set of states of the scaled ordered group $(\mathrm{K}_0(A),\mathrm{K}_0(A)_+,[1_A]_0)$. If $\gamma$ is a state on $\mathrm{K}_0(A)$, then it induces a group homomorphism $\bb{Z}_2\oplus\bb{Z}[\tfrac{1}{2}]\to\bb{R}$ upon composing with the isomorphism $\mathrm{K}_0(A)\cong\bb{Z}_2\oplus\bb{Z}[\tfrac{1}{2}]$. This map is necessarily $0$ on the $\bb{Z}_2$-summand since $\bb{R}$ is torsion free, and maps either $(1,1)$ or $(0,1)$ to $1$ (depending on which element corresponds to $[1_A]_0$): in either case, since the map vanishes on the $\bb{Z}_2$-summand, $(0,1)$ is mapped to $1$ and so this map is necessarily the canonical embedding of the $\bb{Z}[\tfrac{1}{2}]$-summand into $\bb{R}$. By this observation and \eqref{eq:cone-determined} we conclude that $\mathrm{K}_0(A)_+$ is carried onto $\{(x,q)\in\bb{Z}_2\oplus\bb{Z}[\tfrac{1}{2}]: q>0\}\cup\{(0,0)\}$ and the proof is complete.
\end{proof}

\begin{remark}
In order to expand the scope of our method to more examples like, say, other UHF algebras, it would be interesting to employ groupoid cohomology along the lines of \cite{BoeDeAGabWil23} in the framework of Matui's HK-conjecture; cf.\ \cite{Mat12}.
\end{remark}

\section{Non-standard Cantor spectrum diagonals in the CAR algebra}
\noindent We are now ready for the proof of \Cref{intro:thm1}. We denote by $(\mathrm{D}_{2^\infty} \subset \mathrm{M}_{2^\infty})$ the standard $\mathrm{C}^\ast$-diagonal of the CAR algebra obtained as the inductive limit of diagonal $2^n\times 2^n$ matrices.

\begin{theorem}\label{thm:main1}
The CAR algebra $\mathrm{M}_{2^\infty}$ admits a Cantor spectrum diagonal $(D \subset \mathrm{M}_{2^\infty})$ that is not an AF diagonal.
\end{theorem}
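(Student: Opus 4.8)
The plan is to absorb a copy of the CAR algebra into the diagonal produced in \Cref{thm:k-th-pf-dihedral}, in order to annihilate the $2$-torsion that keeps $A\coloneqq C(\underline{X})\rtimes(\bb{Z}\rtimes\bb{Z}_2)$ from being UHF. Concretely, I would pass to the tensor-product pair $\big(C(\underline{X})\otimes\mathrm{D}_{2^\infty}\sub A\otimes\mathrm{M}_{2^\infty}\big)$. A minimal tensor product of two $\mathrm{C}^\ast$-diagonals is again a $\mathrm{C}^\ast$-diagonal: in the Kumjian--Renault picture the relevant twist over an effective Hausdorff \'etale groupoid is just the product of the two twists, which is again effective and Hausdorff \'etale, with diagonal $C(G_1^{(0)}\times G_2^{(0)})$. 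Since $\underline{X}$ is a Cantor space (\Cref{cor:free-min-dihedral-pf}) and $\mathrm{D}_{2^\infty}$ has Cantor spectrum, the spectrum of the new diagonal is a product of Cantor spaces and hence again a Cantor space. So this step already produces a Cantor spectrum diagonal in $A\otimes\mathrm{M}_{2^\infty}$; what remains is to identify $A\otimes\mathrm{M}_{2^\infty}$ with $\mathrm{M}_{2^\infty}$ and to rule out conjugacy to an AF diagonal.

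For the identification I would invoke classification. By \Cref{thm:k-th-pf-dihedral}, $A$ is classifiable with a unique trace, $\mathrm{K}_1(A)=0$, and $\mathrm{K}_0(A)\cong\bb{Z}_2\oplus\bb{Z}[\tfrac{1}{2}]$ (positive cone the strictly positive dyadics together with $0$; order unit $(0,1)$ or $(1,1)$). All of these structural properties persist under $-\otimes\mathrm{M}_{2^\infty}$, and since $\bb{Z}[\tfrac{1}{2}]$ is flat over $\bb{Z}$ the K\"unneth formula yields $\mathrm{K}_0(A\otimes\mathrm{M}_{2^\infty})\cong\mathrm{K}_0(A)\otimes\bb{Z}[\tfrac{1}{2}]\cong\bb{Z}[\tfrac{1}{2}]$ -- the $\bb{Z}_2$-summand vanishes because $2$ is invertible there -- and $\mathrm{K}_1(A\otimes\mathrm{M}_{2^\infty})=0$; moreover the class of the unit is sent to $1\in\bb{Z}[\tfrac{1}{2}]$ irrespective of the above ambiguity, and the trace stays unique, so the order structure is the usual one on $\bb{Z}[\tfrac{1}{2}]$. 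Thus $A\otimes\mathrm{M}_{2^\infty}$ has the Elliott invariant of $\mathrm{M}_{2^\infty}$, and by the classification of unital, separable, simple, nuclear, $\mc{Z}$-stable $\mathrm{C}^\ast$-algebras satisfying the UCT (cf.\ \cite{Lin01,RorSto02,Whi23,Win18}) there is an isomorphism $\Phi\colon A\otimes\mathrm{M}_{2^\infty}\to\mathrm{M}_{2^\infty}$. Setting $D\coloneqq\Phi\big(C(\underline{X})\otimes\mathrm{D}_{2^\infty}\big)$ then exhibits a Cantor spectrum $\mathrm{C}^\ast$-diagonal $(D\sub\mathrm{M}_{2^\infty})$.

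The final, and in a sense decisive, step is to show $(D\sub\mathrm{M}_{2^\infty})$ is not an AF diagonal, for which I would use the theorem of Archbold and Kumjian \cite{ArcKum86} that any $\mathrm{C}^\ast$-subalgebra lying between an AF diagonal and its ambient algebra is itself AF. Inside $A$ we have the intermediate chain $C(\underline{X})\sub C(\underline{X})\rtimes_\varphi\bb{Z}\sub A$; tensoring with $\mathrm{M}_{2^\infty}$ and applying $\Phi$ gives $D\sub B\sub\mathrm{M}_{2^\infty}$ with $B\coloneqq\Phi\big((C(\underline{X})\rtimes_\varphi\bb{Z})\otimes\mathrm{M}_{2^\infty}\big)$. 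Since $C(\underline{X})\rtimes_\varphi\bb{Z}$ is the crossed product of a Cantor minimal $\bb{Z}$-system, the Pimsner--Voiculescu sequence gives $\mathrm{K}_1(C(\underline{X})\rtimes_\varphi\bb{Z})\cong\bb{Z}$ (generated by the canonical unitary), so by K\"unneth $\mathrm{K}_1(B)\cong\bb{Z}\otimes\bb{Z}[\tfrac{1}{2}]\cong\bb{Z}[\tfrac{1}{2}]\neq0$; in particular $B$ is not AF. Hence $(D\sub\mathrm{M}_{2^\infty})$ cannot be conjugate to an AF diagonal, which proves \Cref{thm:main1}.

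I expect the main obstacle to be this last step rather than the tensor-and-classify argument, which is by now routine: one must make sure that $B$ is genuinely a $\mathrm{C}^\ast$-subalgebra sitting strictly between the diagonal and the CAR algebra, so that \cite{ArcKum86} applies, and that the $\mathrm{K}_1$-obstruction truly survives tensoring with $\mathrm{M}_{2^\infty}$ (it does, $\bb{Z}[\tfrac{1}{2}]$ being torsion-free). A minor point worth flagging is that the two-fold ambiguity in $[1_A]_0$ coming from \Cref{thm:k-th-pf-dihedral} is harmless, since in either case the order unit of $\mathrm{K}_0(A\otimes\mathrm{M}_{2^\infty})$ becomes $1$; and of course the genuinely hard input -- freeness of the action from \Cref{prop:pf} and the $\mathrm{K}$-theory computation of \Cref{thm:k-th-pf-dihedral} -- is already in place.
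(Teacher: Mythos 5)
Your proposal is correct and follows essentially the same route as the paper: tensor the pair $(C(\underline{X})\subset C(\underline{X})\rtimes(\bb{Z}\rtimes\bb{Z}_2))$ with $(\mathrm{D}_{2^\infty}\subset\mathrm{M}_{2^\infty})$, identify the ambient algebra with $\mathrm{M}_{2^\infty}$ by classification using the K\"unneth computation and unique trace, and rule out conjugacy to an AF diagonal via the intermediate subalgebra $(C(\underline{X})\rtimes_\varphi\bb{Z})\otimes\mathrm{M}_{2^\infty}$ together with Archbold--Kumjian. The only differences are cosmetic: the paper cites \cite[Lemma~5.1]{BarLi17} for the tensor product of diagonals rather than the groupoid-twist picture, and invokes Power's conjugacy of AF diagonals explicitly, while your (correct) $\mathrm{K}_1\cong\bb{Z}[\tfrac{1}{2}]$ plays the same role as the paper's nonzero $\mathrm{K}_1$ obstruction.
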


\begin{proof}
Let $(\varphi,\sigma)\colon\bb{Z}\rtimes\bb{Z}_2\acts \underline{X}$ be the action of \Cref{cor:free-min-dihedral-pf} and set $A\coloneqq C(\underline{X})\rtimes_{(\varphi,\sigma)}(\bb{Z}\rtimes\bb{Z}_2)$. Set $B\coloneqq A\otimes \mathrm{M}_{2^\infty}$ and $D\coloneqq C(\underline{X})\otimes \mathrm{D}_{2^\infty}$. The spectrum of $D$ is the Cartesian product of the spectra of its factors, whence $D$ has Cantor spectrum. Moreover, $(D \subset B)$ is a $\mathrm{C}^\ast$-diagonal \cite[Lemma~5.1]{BarLi17}.

We first show that $B\cong \mathrm{M}_{2^\infty}$ which we do via classification theory. Note that $B$ is unital, simple and separable, and it is moreover nuclear as the tensor product of two nuclear $\mathrm{C}^\ast$-algebras. Since $B$ admits a Cartan subalgebra (namely $D$) it satisfies the UCT \cite{BarLi17}. By its definition $B$ is $\mathrm{M}_{2^\infty}$-stable, which implies that $B$ is $\mc{Z}$-stable, and since both $A$ and $\mathrm{M}_{2^\infty}$ have unique trace, so does $B$. As abelian groups, the $\mathrm{K}$-theory of $B$ can be computed via the K{\"u}nneth formula \cite[Theorem~23.1.3]{Bla98}: since the $\mathrm{K}$-theory of $\mathrm{M}_{2^\infty}$ is torsion free and since $\mathrm{K}_1(A)=\mathrm{K}_1(\mathrm{M}_{2^\infty})=0$ (by \Cref{thm:k-th-pf-dihedral}), we have
\[
\mathrm{K}_1(B) \cong \big(\mathrm{K}_0(A)\otimes_{\bb{Z}}\mathrm{K}_1(\mathrm{M}_{2^\infty})\big)\oplus\big(\mathrm{K}_1(A)\otimes_{\bb{Z}}\mathrm{K}_0(\mathrm{M}_{2^\infty})\big) = 0.
\]
Also, using at the fourth line below that the tensor product of abelian groups is distributive with respect to direct sums and the fact that $\bb{Z}_2$ has $2$-torsion while $\bb{Z}[\tfrac{1}{2}]$ is $2$-divisible, we have group isomorphisms
\begin{align}
\mathrm{K}_0(B) &\cong \big(\mathrm{K}_0(A) \otimes_\bb{Z}\mathrm{K}_0(\mathrm{M}_{2^\infty})\big)\oplus\big(\mathrm{K}_1(A)\otimes_{\bb{Z}}\mathrm{K}_1(\mathrm{M}_{2^\infty})\big) \label{eq:l1} \\
&\cong \mathrm{K}_0(A) \otimes_\bb{Z}\mathrm{K}_0(\mathrm{M}_{2^\infty}) \label{eq:l2}\\
&\cong \big(\bb{Z}_2\oplus\bb{Z}[\tfrac{1}{2}]\big)\otimes_{\bb{Z}}\bb{Z}[\tfrac{1}{2}]\label{eq:l3}\\
&\cong \bb{Z}[\tfrac{1}{2}]\otimes_{\bb{Z}}\bb{Z}[\tfrac{1}{2}] \label{eq:l4}\\
&\cong \bb{Z}[\tfrac{1}{2}],\label{eq:l5}
\end{align}
with \eqref{eq:l5} the isomorphism satisfying $q_1\otimes q_2\mapsto q_1q_2$ on elementary tensors. Note that $[1_B]_0$ is mapped to the elementary tensor $[1_A]_0\otimes[1_{\mathrm{M}_{2^\infty}}]_0$ via the isomorphisms in \eqref{eq:l1} and \eqref{eq:l2} (cf.\ \cite[\S 23.1]{Bla98} and \cite{Sch82}) and by \Cref{thm:k-th-pf-dihedral} this is in turn mapped to either $(0,1)\otimes 1$ or $(1,1)\otimes 1$ via the isomorphism in \eqref{eq:l3}. In either case this is mapped to $1\otimes1$ via the isomorphism in \eqref{eq:l4}, which is then mapped to $1$ via the isomorphism in \eqref{eq:l5}.

As for the positive cone, we argue like in the proof of \Cref{thm:k-th-pf-dihedral}. Since $B$ is a simple (and stably finite) $\mathrm{C}^\ast$-algebra, $(\mathrm{K}_0(B), \mathrm{K}_0(B)_+)$ is a simple ordered group, and as $B$ is the tensor product of a tracially AF algebra (in the sense of Lin \cite{Lin01}) with a UHF algebra, $B$ is also tracially AF by \cite[Proposition~5.9]{Lin01}, whence $(\mathrm{K}_0(B),\mathrm{K}_0(B)_+)$ is weakly unperforated. By \cite[Theorem~6.8.5]{Bla98}, we have that $\mathrm{K}_0(B)_+$ is equal to the set
\[
\{g\in \mathrm{K}_0(B): \gamma(g)>0\text{ for all }\gamma\in\mathrm{S}(\mathrm{K}_0(B),\mathrm{K}_0(B)_+,[1_B]_0)\}\cup\{0\}.
\]
Any state on $\mathrm{K}_0(B)$ induces a group homomorphism $\gamma\colon\bb{Z}[\tfrac{1}{2}]\to \bb{R}$ by composing with the isomorphisms of \eqref{eq:l1}--\eqref{eq:l5} and this will satisfy $\gamma(1)=1$ (since $[1_B]_0$ is carried to $1$). There is however a unique such homomorphism by $2$-divisibility, namely the canonical embedding of $\bb{Z}[\tfrac{1}{2}]$ into $\bb{R}$. We conclude that $\mathrm{K}_0(B)_+$ is carried to $\bb{Z}[\tfrac{1}{2}]_{\ge0}$ by the isomorphisms in \eqref{eq:l1}--\eqref{eq:l5}, and thus the scaled ordered group $(\mathrm{K}_0(B),\mathrm{K}_0(B)_+,[1_B]_0)$ is isomorphic to $(\bb{Z}[\tfrac{1}{2}],\bb{Z}[\tfrac{1}{2}]_{\ge0},1)$. By classification (see \cite{Win18,Whi23}), we now have that $B\cong \mathrm{M}_{2^\infty}$.

To show that $(D \subset B)$ is not an AF diagonal, it suffices to show that $(D \subset B)$ is not conjugate to $(\mathrm{D}_{2^\infty}\subset \mathrm{M}_{2^\infty})$ (since all AF diagonals in $\mathrm{M}_{2^\infty}$ are conjugate \cite[Theorem~5.7]{Pow92}; also cf.\ \cite[Remark~4.2]{LiLiaWin23}). To that end, note that we have the intermediate sub-$\mathrm{C}^\ast$-algebra
\[
D \subset (C(\underline{X})\rtimes_\varphi\bb{Z})\otimes \mathrm{M}_{2^\infty} \subset B
\]
and note also that $\mathrm{K}_1((C(\underline{X})\rtimes_\varphi\bb{Z})\otimes \mathrm{M}_{2^\infty})\cong\bb{Z}$ (cf.\ \cite{GioPutSka95}). In particular, $(C(\underline{X})\rtimes_\varphi\bb{Z})\otimes \mathrm{M}_{2^\infty}$ is not an AF algebra. However, if $C$ is any intermediate sub-$\mathrm{C}^\ast$-algebra $(\mathrm{D}_{2^\infty} \subset C \subset \mathrm{M}_{2^\infty})$, then $C$ is necessarily AF by the main result of \cite{ArcKum86}. This completes the proof.
\end{proof}

\begin{remark}\label{rmk:new}
In order to confirm that the diagonal above is not AF, instead of using \cite{ArcKum86} one could also observe that there is a unitary normaliser (coming from the paper-folding subshift), such that the induced homeomorphism has infinite orbits, which is not possible for an AF diagonal by a compactness argument.

\end{remark}

Using the theory of \emph{diagonal dimension} developed by Li, Liao and the second named author in \cite{LiLiaWin23}, we can use \Cref{thm:main1} to obtain countably many pairwise non-conjugate $\mathrm{C}^\ast$-diagonals with Cantor spectra in the CAR algebra.

\begin{remark}
By \cite[Theorem~5.4]{LiLiaWin23}, for $G\acts X$ a free action of a countable group on a Cantor space, $\dim_\mathrm{diag}(C(X)\subset C(X)\rtimes_\mathrm{r}G)$ is equal to the \emph{tower dimension} of $G\acts X$, as defined by Kerr in \cite[Definition~4.3]{Ker20}, which further agrees with the \emph{dynamic asymptotic dimension} of $G\acts X$, denoted by $\mathrm{dad}(G\acts X)$, due to \cite[Theorem~5.14]{Ker20}, since $X$ is a Cantor space. It is clear from the definition of the dynamic asymptotic dimension (see \cite[Definition~2.1]{GueWilYu17}; cf.\ \cite[Definition~5.3]{Ker20}) that if $H$ is a subgroup of $G$, then $\mathrm{dad}(H\acts X)\le\mathrm{dad}(G\acts X)$ (see also \cite[Lemma~2.2]{Boe24}). In particular,

\begin{equation}\label{eq:shortcut}
\dim_\mathrm{diag}(C(X)\subset C(X)\rtimes_\mathrm{r}H) \le \dim_\mathrm{diag}(C(X) \subset C(X)\rtimes_\mathrm{r}G).
\end{equation}

\end{remark}

For the course of the following proof we write $A^{\otimes n}$ for the $n$-fold (minimal) tensor product of a unital $\mathrm{C}^\ast$-algebra $A$ with itself, and we use $A^{\otimes \infty}$ for the infinite tensor product of $A$ with itself, defined as the inductive limit of the system $A\to A^{\otimes 2} \to A^{\otimes 3}\to\dots$ with the connecting maps given by $x\mapsto x\otimes 1_A$. For a compact space $X$ we write $X^n$ for the $n$-fold Cartesian product of $X$ with itself and we set $X^\infty\coloneqq\prod_\bb{N}X$. For $\mathrm{C}^\ast$-pairs, we write $(D_1\subset A_1) \cong (D_2\subset A_2)$ when there is a $^*$-isomorphism from $A_1$ to $A_2$ carrying $D_1$ onto $D_2$.

\begin{theorem}\label{thm:countably-many}
For each $n\in \{0,1,2,\dots,\infty\}$, there is a Cantor spectrum diagonal $(D^{(n)} \subset \mathrm{M}_{2^\infty})$ such that
\[
\dim_{\mathrm{diag}}(D^{(n)} \subset \mathrm{M}_{2^\infty})=n.
\]

In particular, the CAR algebra admits countably many pairwise non-conjugate $\mathrm{C}^\ast$-diagonals each of which has Cantor spectrum.
\end{theorem}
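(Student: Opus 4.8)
The plan is to produce the required diagonals as tensor powers of the non-AF Cantor spectrum diagonal of \Cref{thm:main1}, and to pin down their diagonal dimension by using that, for free actions of countable groups on Cantor spaces, $\dim_\mathrm{diag}$ agrees with the dynamic asymptotic dimension of the action (\cite[Theorem~5.4]{LiLiaWin23} together with \cite[Theorem~5.14]{Ker20}), combined with the subgroup monotonicity recorded in \eqref{eq:shortcut}. Concretely, let $(\varphi,\sigma)\colon\bb{Z}\rtimes\bb{Z}_2\acts\underline{X}$ be the action of \Cref{cor:free-min-dihedral-pf}, put $A\coloneqq C(\underline{X})\rtimes(\bb{Z}\rtimes\bb{Z}_2)$, and let $(D\subset B)$ be the Cantor spectrum diagonal of \Cref{thm:main1}, so that $B=A\otimes\mathrm{M}_{2^\infty}\cong\mathrm{M}_{2^\infty}$ and $(D\subset B)=(C(\underline{X})\subset A)\otimes(\mathrm{D}_{2^\infty}\subset\mathrm{M}_{2^\infty})$. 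For $n=0$ I would take $(D^{(0)}\subset\mathrm{M}_{2^\infty})\coloneqq(\mathrm{D}_{2^\infty}\subset\mathrm{M}_{2^\infty})$; for $1\le n<\infty$ the $n$-fold tensor power $(D\subset B)^{\otimes n}$; and for $n=\infty$ the infinite tensor power $(D\subset B)^{\otimes\infty}$.

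First I would check that these are indeed Cantor spectrum diagonals in $\mathrm{M}_{2^\infty}$. Since $\mathrm{M}_{2^\infty}$ is (strongly) self-absorbing, $B^{\otimes n}\cong\mathrm{M}_{2^\infty}$ for every $n\in\{1,2,\dots,\infty\}$; the spectrum of $D^{\otimes n}$ is a finite or countable power of a Cantor space, hence a Cantor space; and tensor products of diagonals are diagonals (cf.\ \cite[Lemma~5.1]{BarLi17}). Unfolding the tensor factors gives, for finite $n$,
\[
(D\subset B)^{\otimes n}\;\cong\;\big(C(\underline{X}^n)\subset C(\underline{X}^n)\rtimes(\bb{Z}\rtimes\bb{Z}_2)^n\big)\otimes(\mathrm{D}_{2^\infty}\subset\mathrm{M}_{2^\infty}),
\]
with $(\bb{Z}\rtimes\bb{Z}_2)^n$ acting coordinatewise (and freely) on $\underline{X}^n$, and similarly for $n=\infty$ with the restricted direct sum $(\bb{Z}\rtimes\bb{Z}_2)^{\oplus\bb{N}}$ acting on $\underline{X}^\infty=\prod_\bb{N}\underline{X}$. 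Since tensoring a $\mathrm{C}^\ast$-pair by an AF diagonal leaves its diagonal dimension unchanged (the AF factor contributing $0$, via the tensor-product estimates of \cite{LiLiaWin23}), for finite $n$ one reduces to $\dim_\mathrm{diag}\big(C(\underline{X}^n)\subset C(\underline{X}^n)\rtimes(\bb{Z}\rtimes\bb{Z}_2)^n\big)=\mathrm{dad}\big((\bb{Z}\rtimes\bb{Z}_2)^n\acts\underline{X}^n\big)$. As the acting group is virtually $\bb{Z}^n$, hence of asymptotic dimension $n$, while $\underline{X}^n$ is zero-dimensional, the Guentner--Willett--Yu bound \cite{GueWilYu17} yields $\mathrm{dad}\big((\bb{Z}\rtimes\bb{Z}_2)^n\acts\underline{X}^n\big)\le n$, so $\dim_\mathrm{diag}(D^{(n)}\subset\mathrm{M}_{2^\infty})\le n$; also $\dim_\mathrm{diag}(D^{(0)}\subset\mathrm{M}_{2^\infty})=0$, AF diagonals having diagonal dimension $0$ \cite{LiLiaWin23}.

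For the matching lower bound, the crucial observation is that $\bb{Z}^n$ is a subgroup of $(\bb{Z}\rtimes\bb{Z}_2)^n$ whose coordinatewise action on $\underline{X}^n$ is the $n$-fold product of the free minimal Cantor $\bb{Z}$-system $\varphi\colon\bb{Z}\acts\underline{X}$, and that $\mathrm{dad}(\bb{Z}^n\acts\underline{X}^n)=n$. Granting this, \eqref{eq:shortcut} gives
\[
\dim_\mathrm{diag}(D^{(n)}\subset\mathrm{M}_{2^\infty})\;\ge\;\dim_\mathrm{diag}\big(C(\underline{X}^n)\subset C(\underline{X}^n)\rtimes\bb{Z}^n\big)\;=\;\mathrm{dad}(\bb{Z}^n\acts\underline{X}^n)\;=\;n,
\]
so $\dim_\mathrm{diag}(D^{(n)}\subset\mathrm{M}_{2^\infty})=n$ for every finite $n\ge1$. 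For $n=\infty$, one applies \eqref{eq:shortcut} to $\bb{Z}^{\oplus\bb{N}}\le(\bb{Z}\rtimes\bb{Z}_2)^{\oplus\bb{N}}$ to bound $\dim_\mathrm{diag}\big((D\subset B)^{\otimes\infty}\big)$ below by $\mathrm{dad}(\bb{Z}^{\oplus\bb{N}}\acts\underline{X}^\infty)$, and then uses subgroup monotonicity of $\mathrm{dad}$ for $\bb{Z}^n\le\bb{Z}^{\oplus\bb{N}}$ (acting on the first $n$ coordinates of $\underline{X}^\infty$ and trivially on the rest, which leaves $\mathrm{dad}$ equal to $\mathrm{dad}(\bb{Z}^n\acts\underline{X}^n)=n$) to conclude $\dim_\mathrm{diag}\big((D\subset B)^{\otimes\infty}\big)\ge n$ for all $n$, hence $=\infty$. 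Finally, diagonal dimension is a conjugacy invariant of $\mathrm{C}^\ast$-pairs, so the pairs $(D^{(n)}\subset\mathrm{M}_{2^\infty})$, $n\in\{0,1,2,\dots,\infty\}$, are pairwise non-conjugate, each with Cantor spectrum; in particular the CAR algebra carries countably many pairwise non-conjugate Cantor spectrum diagonals.

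The step I expect to be the main obstacle is the identity $\mathrm{dad}(\bb{Z}^n\acts\underline{X}^n)=n$, and specifically its lower bound $\ge n$: the upper bound is part of the standard $\bb{Z}^n$-estimate, but the lower bound cannot be extracted from nuclear dimension (all $\mathrm{C}^\ast$-algebras in sight are $\mc{Z}$-stable, hence of nuclear dimension at most $1$), so a genuinely dynamical lower estimate for the dynamic asymptotic dimension of an $n$-fold product of Cantor minimal systems is needed.
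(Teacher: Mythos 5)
Your overall route (tensor powers of the diagonal from \Cref{thm:main1}, the identification of diagonal dimension with dynamic asymptotic dimension for free Cantor actions, and subgroup monotonicity \eqref{eq:shortcut}) is the paper's, and your upper bound $\dim_\mathrm{diag}(D^{(n)}\subset\mathrm{M}_{2^\infty})\le n$ is fine; but the lower bound has a genuine gap exactly where you invoke \eqref{eq:shortcut}. That inequality compares the crossed-product pairs of a subgroup $H\le G$ acting on the \emph{same} Cantor space; it says nothing about a tensor factor of a pair. As you present it, $(D^{(n)}\subset\mathrm{M}_{2^\infty})$ is $(C(\underline{X}^n)\otimes\mathrm{D}_{2^\infty}\subset (C(\underline{X}^n)\rtimes(\bb{Z}\rtimes\bb{Z}_2)^n)\otimes\mathrm{M}_{2^\infty})$, which is \emph{not} of the form $(C(Z)\subset C(Z)\rtimes G)$ for any group containing $\bb{Z}^n$, so the claimed inequality $\dim_\mathrm{diag}(D^{(n)}\subset\mathrm{M}_{2^\infty})\ge\dim_\mathrm{diag}(C(\underline{X}^n)\subset C(\underline{X}^n)\rtimes\bb{Z}^n)$ does not follow from \eqref{eq:shortcut}; the possibility that tensoring with the zero-dimensional pair $(\mathrm{D}_{2^\infty}\subset\mathrm{M}_{2^\infty})$ collapses the dimension is precisely the delicate point (your parenthetical claim that tensoring by an AF diagonal leaves $\dim_\mathrm{diag}$ unchanged only has the ``$\le$'' half available from the tensor-product estimate; the ``$\ge$'' half is what has to be proved, cf.\ \Cref{rmk:dimdiagp}). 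The paper's fix consists of two steps you omit: first realise $(\mathrm{D}_{2^\infty}\subset\mathrm{M}_{2^\infty})$ dynamically as $(C(Y)\subset C(Y)\rtimes\Gamma)$ for a free minimal action of the locally finite group $\Gamma=\bigoplus_\bb{N}\bb{Z}_2$ \cite[II.10.4.12~(iii)]{Bla06}, so that $(D^{(n)}\subset\mathrm{M}_{2^\infty})$ becomes the canonical crossed-product pair of $(\bb{Z}\rtimes\bb{Z}_2)^n\times\Gamma\acts\underline{X}^n\times Y$ and \eqref{eq:shortcut} applies to the subgroup $(\bb{Z}\rtimes\bb{Z}_2)^n\times\{0_\Gamma\}$; this lands you on the pair $(C(\underline{X}^n)\otimes\mathrm{D}_{2^\infty}\subset(C(\underline{X}^n)\rtimes(\bb{Z}\rtimes\bb{Z}_2)^n)\otimes\mathrm{D}_{2^\infty})$, and a second step --- composing with a character of $\mathrm{D}_{2^\infty}$ and using the behaviour of diagonal dimension under quotients, \cite[Theorem~3.2~(v)]{LiLiaWin23} --- is needed to strip off the remaining $\mathrm{D}_{2^\infty}$ factor and reach $(C(\underline{X}^n)\subset C(\underline{X}^n)\rtimes(\bb{Z}\rtimes\bb{Z}_2)^n)$. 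The same issue recurs verbatim in your $n=\infty$ argument.

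The second gap is the one you flag yourself: the lower estimate $\mathrm{dad}(\bb{Z}^n\acts\underline{X}^n)\ge n$ is left unproved, and without it the whole lower bound is conditional. Here no new dynamical argument is needed: \cite[Proposition~7.2]{LiLiaWin23} gives $\dim_\mathrm{diag}(C(\underline{X}^n)\subset C(\underline{X}^n)\rtimes(\bb{Z}\rtimes\bb{Z}_2)^n)=n$ for the free action of the finitely generated, virtually abelian group $(\bb{Z}\rtimes\bb{Z}_2)^n$, whose asymptotic dimension is $n$; this single citation supplies both the upper and the lower estimate at the level of the crossed product by $(\bb{Z}\rtimes\bb{Z}_2)^n$ (so you can dispense with the passage to $\bb{Z}^n$ altogether), and it is exactly what the paper uses. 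As written, your proposal is therefore incomplete at both of these points, even though the overall architecture matches the paper's proof.
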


\begin{proof}
The case $n=0$ is covered by the canonical AF diagonal, since $\dim_{\mathrm{diag}}(\mathrm{D}_{2^\infty}\subset\mathrm{M}_{2^\infty})=0$ (see \cite[Theorem~4.1 and Remark~4.2]{LiLiaWin23}).

By (the proof of) \Cref{thm:main1} we have a free action $\bb{Z}\rtimes\bb{Z}_2\acts X$ of the infinite dihedral group on the Cantor space, such that $D\coloneqq C(X)\otimes \mathrm{D}_{2^\infty}$ is a $\mathrm{C}^\ast$-diagonal in $A\coloneqq (C(X)\rtimes(\bb{Z}\rtimes\bb{Z}_2))\otimes \mathrm{M}_{2^\infty}\cong \mathrm{M}_{2^\infty}$.

For $1\le n<\infty$ let $D^{(n)}\coloneqq D^{\otimes n} \subset A^{\otimes n}\cong \mathrm{M}_{2^\infty}^{\otimes n}\cong \mathrm{M}_{2^\infty}$ and note that each pair $(D^{(n)} \subset \mathrm{M}_{2^\infty})$ is a Cantor spectrum diagonal \cite[Lemma~5.1]{BarLi17}. Upon re-arranging the tensor factors, and since $(\mathrm{D}_{2^\infty}^{\otimes n} \subset \mathrm{M}_{2^\infty}^{\otimes n})\cong (\mathrm{D}_{2^\infty}\subset \mathrm{M}_{2^\infty})$,
\[
(D^{(n)}\subset \mathrm{M}_{2^\infty}) \cong (C(X^n)\otimes \mathrm{D}_{2^\infty } \subset (C(X^n)\rtimes(\bb{Z}\rtimes\bb{Z}_2)^n)\otimes \mathrm{M}_{2^\infty})
\]
and so by \cite[Theorem~3.1~(ii)]{LiLiaWin23} 
\begin{align*}
\dim_{\mathrm{diag}}(D^{(n)}\subset \mathrm{M}_{2^\infty}) \le \dim_{\mathrm{diag}}(C(X^n) \subset C(X^n)\rtimes(\bb{Z}\rtimes\bb{Z}_2)^n).
\end{align*}
Now by \cite[Proposition~7.2]{LiLiaWin23}, and since $(\bb{Z}\rtimes\bb{Z}_2)^n$ is finitely generated, virtually abelian with asymptotic dimension $n$,\footnote{Note that $\mathrm{asdim}((\bb{Z}\rtimes\bb{Z}_2)^n)\le n\cdot\mathrm{asdim}(\bb{Z}\rtimes\bb{Z}_2)=n$, where the last equality follows from the fact that $\bb{Z}\rtimes\bb{Z}_2$ is virtually $\bb{Z}$ and $\mathrm{asdim}(\bb{Z})=1$. Also, since $\bb{Z}^n$ is a subgroup of $(\bb{Z}\rtimes\bb{Z}_2)^n$, we have that $n=\mathrm{asdim}(\bb{Z}^n)\le\mathrm{asdim}((\bb{Z}\rtimes\bb{Z}_2)^n)$.} 
\begin{equation}\label{eq:another}
\mathrm{dim}_{\mathrm{diag}}(C(X^n)\subset C(X^n)\rtimes(\bb{Z}\rtimes\bb{Z}_2)^n)=n
\end{equation}
and thus 
\begin{equation*}\tag{4.7i}
\dim_\mathrm{diag}(D^{(n)}\subset \mathrm{M}_{2^\infty})\le n.
\end{equation*}

Let $\Gamma$ be the locally finite group $\bigoplus_\bb{N}\bb{Z}_2$. There is a free minimal action $\Gamma\acts Y$, with $Y$ a Cantor space, such that $(C(Y)\subset C(Y)\rtimes\Gamma)\cong (\mathrm{D}_{2^\infty}\subset \mathrm{M}_{2^\infty})$, see \cite[II.10.4.12~(iii)]{Bla06}. Again upon re-arranging the tensor factors and since $(\mathrm{D}_{2^\infty}^{\otimes n} \subset \mathrm{M}_{2^\infty}^{\otimes n})\cong (\mathrm{D}_{2^\infty}\subset \mathrm{M}_{2^\infty})$,
\begin{equation*}\tag{4.7a}
(D^{(n)} \subset \mathrm{M}_{2^\infty}) \cong (C(X^n\times Y)\subset C(X^n\times Y)\rtimes((\bb{Z}\rtimes\bb{Z}_2)^n\times \Gamma)).
\end{equation*}
Applying \eqref{eq:shortcut} with $H= (\bb{Z}\rtimes\bb{Z}_2)^n\times\{0_\Gamma\} \le (\bb{Z}\rtimes\bb{Z}_2)^n\times \Gamma = G$ therein, together with (4.7a),
\begin{align*}\tag{4.7b}
&\dim_\mathrm{diag}(C(X^n)\otimes \mathrm{D}_{2^\infty} \subset (C(X^n)\rtimes(\bb{Z}\rtimes\bb{Z}_2)^n)\otimes\mathrm{D}_{2^\infty})\\ &\le\dim_{\mathrm{diag}}(D^{(n)}\subset\mathrm{M}_{2^\infty}).
\end{align*}
Set $B\coloneqq (C(X^n)\rtimes(\bb{Z}\rtimes\bb{Z}_2)^n)\otimes\mathrm{D}_{2^\infty}$ and let $\vartheta$ be a character on $\mathrm{D}_{2^\infty}$. The map $\mathrm{id}\otimes\vartheta\colon B \to C(X^n)\rtimes(\bb{Z}\rtimes\bb{Z}_2)^n$ is a surjective $^*$-homomorphism that carries $C(X^n)\otimes \mathrm{D}_{2^\infty}$ onto $C(X^n)$, and so by \cite[Theorem~3.2~(v)]{LiLiaWin23} we have
\begin{equation}\label{eq:some}
\dim_{\mathrm{diag}}(C(X^n) \subset C(X^n)\rtimes(\bb{Z}\rtimes\bb{Z}_2)^n)\le \dim_{\mathrm{diag}}(C(X^n)\otimes \mathrm{D}_{2^\infty} \subset B),
\end{equation}
whence by combining \eqref{eq:another} with \eqref{eq:some}, we have 
\begin{equation*}\tag{4.7ii}
n \le \dim_\mathrm{diag}(D^{(n)} \subset \mathrm{M}_{2^\infty}).
\end{equation*}
Putting (4.7i) and (4.7ii) together, we see that $\dim_{\mathrm{diag}}(D^{(n)}\subset \mathrm{M}_{2^\infty})=n$ as wanted.

Lastly, set $D^{(\infty)}\coloneqq D^{\otimes \infty} \subset  A^{\otimes \infty} \cong \mathrm{M}_{2^\infty}^{\otimes \infty} \cong \mathrm{M}_{2^\infty}$ which is a $\mathrm{C}^\ast$-diagonal arising as $(C(X^\infty\times Y) \subset C(X^\infty\times Y)\rtimes (\bigoplus_\bb{N}(\bb{Z}\rtimes\bb{Z}_2)\times \Gamma))$ for the product of the coordinate-wise action $\bigoplus_\bb{N}(\bb{Z}\rtimes\bb{Z}_2)\acts X^\infty$ with $\Gamma\acts Y$ (cf.\ \cite[Lemma~5.2]{BarLi17}). For $n\in\bb{N}$
\[
(D^{(\infty)} \subset \mathrm{M}_{2^\infty})\cong (D^{\otimes n} \otimes D^{\otimes \infty} \subset A^{\otimes n} \otimes A^{\otimes \infty})
\]
and the latter pair can be written as a canonical crossed product pair of an action of $(\bb{Z}\rtimes\bb{Z}_2)^n\times\Gamma\times\big(\bigoplus_\bb{N}(\bb{Z}\rtimes\bb{Z}_2)\times\Gamma\big)$. Applying \eqref{eq:shortcut} for the subgroup $(\bb{Z}\rtimes\bb{Z}_2)^n$ and arguing as in the preceding paragraph, we obtain $n\le \dim_\mathrm{diag}(D^{(\infty)} \subset \mathrm{M}_{2^\infty})$. Since $n\in\bb{N}$ was arbitrary, this proves that $\dim_\mathrm{diag}(D^{(\infty)}\subset \mathrm{M}_{2^\infty})=\infty$ as desired.
\end{proof}

\begin{remark}\label{rmk:dimdiagp}
The theorem above in particular says that an AF algebra may contain a diagonal which itself is AF and such that its inclusion has nonzero diagonal dimension, thus resolving a problem that was raised in \cite[Remark~6.10]{LiLiaWin23}.
\end{remark}

\begin{remark}\label{rmk:graphs}
In \cite[Section~6]{EvaSim12}, the authors construct a $2$-graph $\Lambda_{\mathrm{II}}$ (see \cite[Section~2]{EvaSim12} for the relevant definitions) such that  $\mathrm{C}^\ast(\Lambda_\mathrm{II})$ is unital, separable, simple, nuclear, in the UCT class, has a unique trace and contains a projection $p$ such that the corner $p\mathrm{C}^\ast(\Lambda_\mathrm{II})p$ has the same ordered $\mathrm{K}$-theory as the CAR algebra, and which contains a Cartan sub-$\mathrm{C}^\ast$-algebra $D_0\subset p\mathrm{C}^\ast(\Lambda_{\mathrm{II}})p$ with Cantor spectrum that is not a $\mathrm{C}^\ast$-diagonal (see the discussion after \cite[Remark~6.13]{EvaSim12}). While the conditions on $\mathrm{C}^\ast(\Lambda_\mathrm{II})$ are not enough to apply the classification theorem and deduce that $p\mathrm{C}^\ast(\Lambda_{\mathrm{II}})p$ is in fact isomorphic to $\mathrm{M}_{2^\infty}$, this obstacle is surpassed by considering $p\mathrm{C}^\ast(\Lambda_{\mathrm{II}})p\otimes \mathrm{M}_{2^\infty}$, which is $\mc{Z}$-stable due to the tensor factor $\mathrm{M}_{2^\infty}$, and has the same $\mathrm{K}$-theory (this follows from the K{\"u}nneth formula)  and traces as $\mathrm{M}_{2^\infty}$. This leads to the somewhat surprising conclusion that $\mathrm{M}_{2^\infty}$ contains Cartan sub-$\mathrm{C}^\ast$-algebras with Cantor spectra that are not $\mathrm{C}^\ast$-diagonals, which are obtained as the tensor products of $D_0$ either with $\mathrm{D}_{2^\infty}$ or with any of the diagonals obtained in \Cref{thm:countably-many}.
\end{remark}

\renewcommand{\thetheorem}{\Alph{theorem}}
\setcounter{theorem}{0}

\noindent

\end{document}